\newcommand{\TheAuthor}{Sicheng Zhang}
\newcommand{\TheSupervisor}{Simon L. Rydin Myerson}
\newcommand{\TheTitle}{Fourier Restriction: From Linear Restriction to Multilinear Restriction}
\newcommand{\TheModule}{\bf MA4K9 Dissertation}
\newcommand{\TheUni}{The University of Warwick}
\newcommand{\TheDept}{Mathematics Institute}
\DeclareMathOperator{\supp}{supp}
\DeclareMathOperator{\nei}{Nei}
\DeclareMathOperator{\vol}{Vol}
\DeclareMathOperator{\BR}{BR}
\DeclareMathOperator{\MR}{MR}
\DeclareMathOperator{\margin}{margin}
\newtheorem{RT}{Restriction Theorem}
\newtheorem{conjecture}{Conjecture}
\newtheorem{definition}{Definition}[section]
\newtheorem{theorem}{Theorem}[section]
\newtheorem{lemma}[theorem]{Lemma}
\begin{document}

\pagenumbering{roman}
\begin{titlepage}
\begin{center}
\includegraphics[width=5cm]{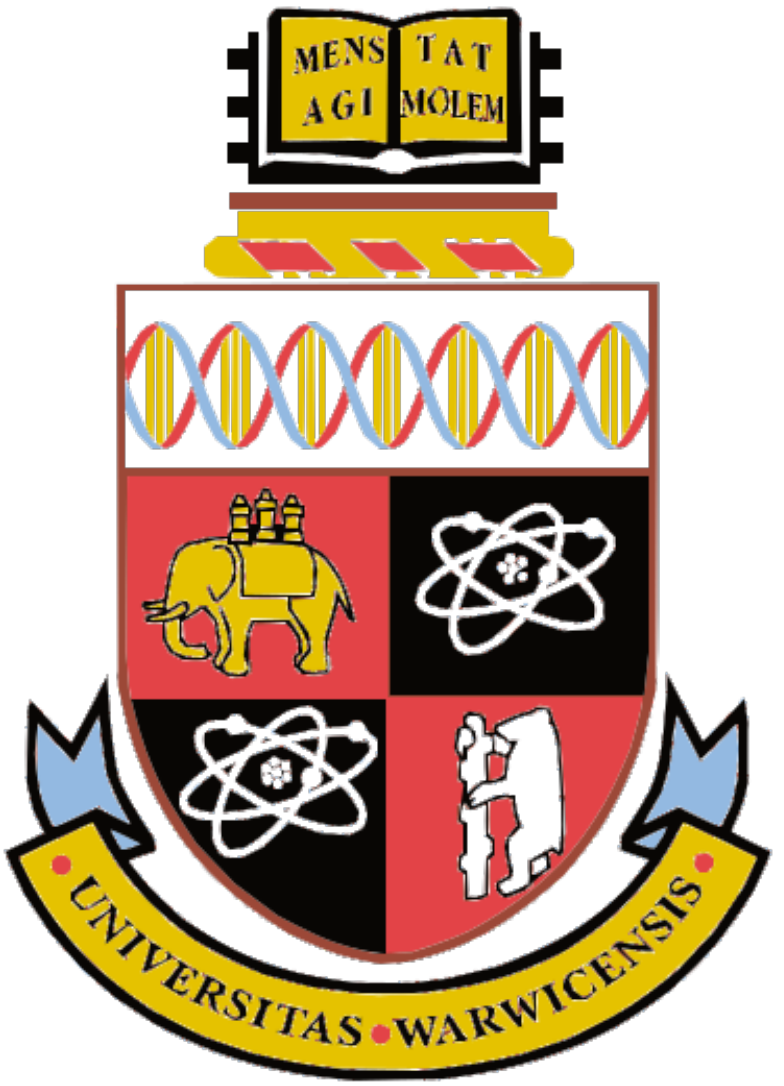} 

\begin{spacing}{2}
\begin{center}
{\Large \bf \TheTitle} 

by

{\Large \bf \TheAuthor} 

Supervised by 

{\Large \bf \TheSupervisor}
\vspace*{8pt}

{\large \bf \TheModule} 

Submitted to \TheUni 

\vspace*{8pt}
{\Large \bf \TheDept} 

April, 2025

\vspace*{8pt}
\includegraphics[width=5cm]{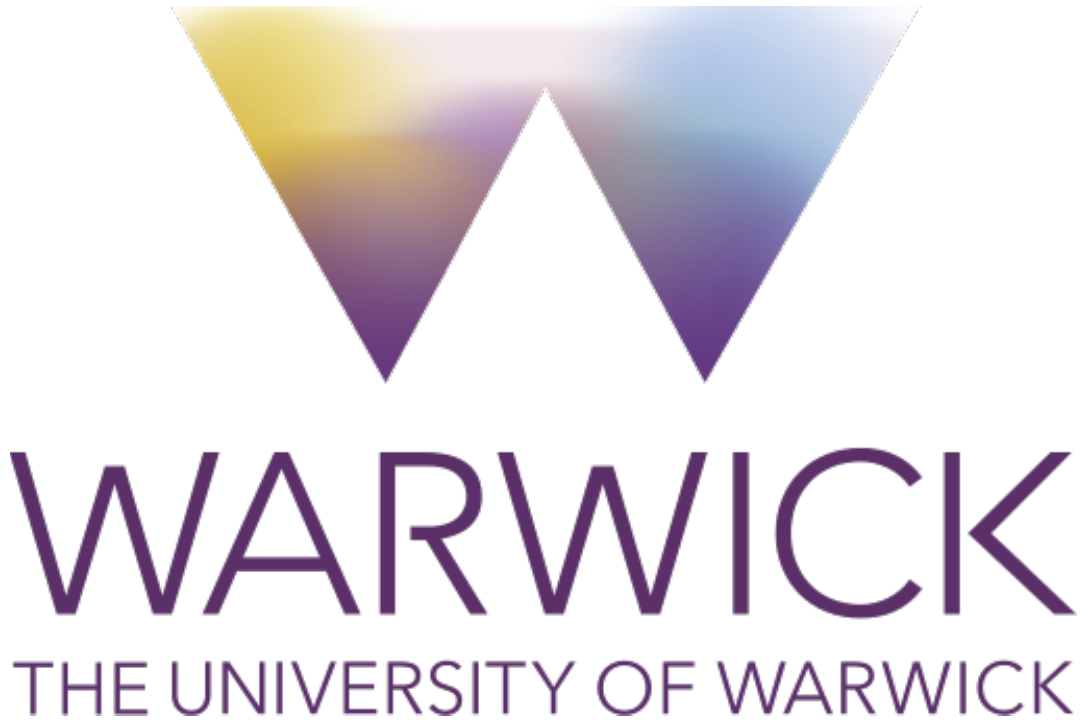} 

\end{center}
\end{spacing}
\end{center}
\end{titlepage}

\setcounter{page}{2}
\tableofcontents
\cleardoublepage
\onehalfspacing
\pagenumbering{arabic}
\onehalfspacing
\raggedbottom

%

\section{Introduction}

This dissertation studies the Fourier restriction, which is to find the range of the constants \(p\), \(q\) such that the \(L^q\) norm on a chosen subset of the Fourier domain is bounded above by the \(L^p\) norm in the whole spatial domain, up to some constant that is independent of the function. Before the inception of this subfield of harmonic analysis, the earliest conclusion of such kind of estimates is the boundness property of the Fourier transform, where \(q=\infty\) and \(p=1\). Since the Fourier transform has its explicit expression only when the function is in \(L^1\), the functions that will be used in restriction estimates will be the Schwartz functions, defined at the start of the next section. Surprisingly, the restriction estimate is much easier on the whole space \(\mathbb{R}^n\), where the range of \(p, q\) are proved and are strict. However, on other manifolds such as the sphere and the paraboloid, the restriction estimates are only partially proved in lower dimensions, and higher dimension cases remain as conjectures. This is because the range of \(p\) and \(q\) in the restriction estimates are larger when the manifolds are specifically chosen, which drastically increases the complexity of the proofs.

The previous restriction problem is classified as linear restriction, which involves a single Schwartz function. There are more generalized cases, which are called bilinear or even multilinear restriction. Those estimates involve the product of multiple functions, and the problem aims to bound its \(L^q\) norm above via the product of \(L^p\) norms up to a constant similar as the linear case. The bilinear and multilinear restriction help improve the linear restriction in the sense that they can be used to prove stricter estimates of the linear case than direct proofs.

The Fourier restriction also has a major contribution to the theory of decoupling, which has various applications to other fields of mathematics such as number theory and PDEs.

This dissertation aims to discuss purely around the theory of Fourier restriction. There will be three main sections. Section 2 is about linear restriction, where the restriction on the whole space is discussed. A proof of the restriction estimate on curves will be presented, and further discussions on the restriction problem on the sphere and paraboloid via the Stein-Tomas argument. Section 3 is about bilinear restriction, where the estimate on 2-dimensional case is proved and used to verify the restriction conjecture on the 2-dimensional paraboloid. Section 4 is about multi-linear restriction, and the entire section is used to prove a close result of the multilinear restriction estimate.

Some notations and definitions which will be used throughout this dissertation:
\begin{itemize}
\item Denote the expectation of a randomized function as: \(\mathbb{E}f.\)
\item Denote the open ball of radius \(r\) on \(x\) as: \(B_r(x):=\{y\in\mathbb{R}^n:\|y-x\|\leqslant r\}\).
\item Denote the closure of a set \(A\) as \(\overline{A}\), denote the complement of a set \(A\) as \(A^c\).
\item Denote the support of a function \(f:\Omega \rightarrow R\): \(\supp(f):=\{x\in\mathbb{R}^n:f(x)\neq0\}\).
\item For two quantities \(X\), \(Y\), denote \(X\lesssim_{par}Y\), or abbreviated as \(X\lesssim Y\), if there exists \(C\) depends on the set of parameters \(par\) such that \(X \leqslant CY\). If also \(Y\lesssim_{par}X\), then denote \(X\sim Y\).
\item Define the normed space \(L^p(\mathbb{R}^n):=\{f(x):(\int_{R^n} |f(x)|^p)^\frac{1}{p}<\infty\}\), with norm \(\|f(x)\|_{L^p(R^n)}:=(\int_{R^n} |f(x)|^p)^\frac{1}{p}\).
\item For \(k\geqslant1\), define the vector space \(C^k(\Omega):=\{f:\Omega\to\mathbb{R}^n:f\) is \(k\) times continuously differentiable\(\}\). And for \(k=\infty\), \(C^\infty(\Omega)\) is defined as the intersection of all \(C^k(\Omega)\) spaces.
\item Define the dual of \(p\) as \(p'\) such that \(\frac{1}{p}+\frac{1}{p'}=1\), \(1\leqslant p<\infty\).
\item For a smooth function \(\psi:U\to\mathbb{R}^{n-d}\), \(U\) is assumed to be a bounded subset of \(\mathbb{R^d}\) throughout this dissertation, define the d-dimensional manifold in \(\mathbb{R}^n\) as: \(\mathcal{M}^\psi:=\{(\xi,\psi(\xi)):\xi\in U\}\).
\item Denote the Lebesgue measure as: \(\mu\). All measures are Lebesgue measure if not specified.
\item All indices in this dissertation will not use the symbol \(i\), and \(i\) only refers to the imaginary unit. Also, \(n\) only refers to the dimension of the chosen space.
\item Denote \(f(x)=O(g(x))\) if there exists a constant \(M>0\) such that \(|f(x)|\leqslant M|g(x)|\).
\end{itemize}

\section{Linear Restriction}
We start with the definition of the Fourier transform. The Fourier transform of a function in \(L^1(\mathbb{R}^n)\) is defined by \(\widehat{f}(\xi):=\int_{\mathbb{R}^n}f(x)e^{-2\pi ix\cdot \xi}dx\), and the inverse Fourier transform for a function in \(L^1(\mathbb{R}^n)\) as \(\widecheck{f}(x):=\int_{\mathbb{R}^n}f(\xi)e^{2\pi ix\cdot \xi}d\xi\). The Fourier transform is bounded in the sense that \(\|\widehat{f}\|_{L^\infty}\leqslant\|f\|_{L^1}\). Another significant relation between a function and its Fourier transform is Plancherel’s identity, such that for any \(f\in L^1\cap L^2\), \(\|\widehat f\|_{L^2}=\|f\|_{L^2}\). The previous two identities along with the Riesz-Thorin interpolation theorem give Hausdorff-Young's inequality, which gives a more generalized result.

We have
\begin{theorem}[Riesz-Thorin Interpolation Theorem]
Let \(T\) be a linear operator that maps the sum set \(L^{p0}+L^{p1}\) to the sum set \(L^{q0}+L^{q1}\), \(1\leqslant p_0,p_1,q_0,q_1\leqslant\infty\) such that \(p_0\neq p_1\) and \(q_0\neq q_1\). If \(\|T(f)\|_{L^{q_0}}\leqslant M_0\|f\|_{L^{p_0}}\) for all \(f\in L^{p_0}\) and \(\|T(g)\|_{L^{q_1}}\leqslant M_1\|g\|_{L^{p_1}}\) for all \(g\in L^{q_0}\), let \(\frac{1}{p}:=\frac{1-\theta}{p_0}+\frac{\theta}{p_1}\) and \(\frac{1}{q}:=\frac{1-\theta}{q_0}+\frac{\theta}{q_1}\), \(0<\theta<1\), 

then \(\|T(h)\|_{L^{p}}\leqslant M_0^{1-\theta}M_1^{\theta}\|h\|_{L^{q}}\) for all \(h\in L_{p}\).[1]
\end{theorem}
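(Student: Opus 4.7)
The plan is to prove this via the classical complex interpolation method, whose analytic core is the Hadamard three lines theorem: if $F$ is holomorphic on the open strip $\{z : 0 < \mathrm{Re}(z) < 1\}$, continuous and bounded on its closure, and satisfies $|F(it)| \leqslant M_0$ and $|F(1+it)| \leqslant M_1$ for every $t \in \mathbb{R}$, then $|F(\theta)| \leqslant M_0^{1-\theta} M_1^{\theta}$. The entire argument amounts to constructing a suitable analytic function $F$ to which this theorem applies.

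First I would use duality to rewrite
\[
\|T(h)\|_{L^q} = \sup\Bigl\{ \Bigl| \int T(h) g \,dx \Bigr| : g \text{ simple}, \|g\|_{L^{q'}} \leqslant 1 \Bigr\},
\]
and by density of simple functions in $L^p$ reduce to the case where $h$ is also simple of unit norm. Writing $h = \sum_k a_k e^{i\alpha_k} \chi_{E_k}$ and $g = \sum_j b_j e^{i\beta_j} \chi_{F_j}$ with $a_k, b_j > 0$ and the sets of finite measure, I would introduce the linear exponents $\alpha(z) = (1-z)/p_0 + z/p_1$ and $\beta(z) = (1-z)/q_0' + z/q_1'$, chosen so that $\alpha(\theta) = 1/p$ and $\beta(\theta) = 1/q'$, and form the analytic families
\[
h_z := \sum_k a_k^{p\alpha(z)} e^{i\alpha_k} \chi_{E_k}, \qquad g_z := \sum_j b_j^{q'\beta(z)} e^{i\beta_j} \chi_{F_j},
\]
which reduce to $h$ and $g$ at $z = \theta$.

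Next I would set $F(z) := \int T(h_z) g_z \,dx$. Since $T$ is linear and $h_z, g_z$ are finite sums of exponentials of $z$ multiplied by fixed indicators of finite measure, $F$ is entire and bounded on any vertical strip. On $\mathrm{Re}(z) = 0$, a direct calculation using $|a^w| = a^{\mathrm{Re}(w)}$ shows $\|h_{it}\|_{L^{p_0}} = 1$ and $\|g_{it}\|_{L^{q_0'}} = 1$, so by the first hypothesis and Hölder's inequality
\[
|F(it)| \leqslant \|T(h_{it})\|_{L^{q_0}} \|g_{it}\|_{L^{q_0'}} \leqslant M_0.
\]
The symmetric computation on $\mathrm{Re}(z) = 1$ gives $|F(1+it)| \leqslant M_1$, and the three lines theorem then yields $|F(\theta)| \leqslant M_0^{1-\theta} M_1^{\theta}$. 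Since $F(\theta) = \int T(h) g \,dx$, taking the supremum over $g$ completes the estimate for simple $h$, and density extends it to arbitrary $h \in L^p$.

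The main obstacle is the bookkeeping involved in choosing the exponents $p\alpha(z)$ and $q'\beta(z)$ so that the raised simple functions genuinely have unit $L^{p_0}$ and $L^{q_0'}$ norms on the line $\mathrm{Re}(z) = 0$, and unit $L^{p_1}$ and $L^{q_1'}$ norms on $\mathrm{Re}(z) = 1$; a single sign error in the dual exponents breaks the Hölder step. A minor subtlety is that $T$ must be meaningfully defined on the sum space $L^{p_0} + L^{p_1}$ so that $F(z)$ makes sense on the full strip, but for simple $h$ we have $h_z \in L^{p_0} \cap L^{p_1}$ for every $z$ in the closed strip, so this is automatic.
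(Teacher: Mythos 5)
The paper does not prove this theorem; it only states it and cites Folland [1], so there is no internal proof to compare against. Your proposal is the classical complex-interpolation argument via the Hadamard three lines theorem — the same proof found in the cited reference — and it is correct: the analytic families $h_z$, $g_z$ with exponents $p\,\alpha(z)$ and $q'\beta(z)$ are set up so that the boundary norms on $\mathrm{Re}(z)=0$ and $\mathrm{Re}(z)=1$ come out to $1$, the H\"older step and three-lines conclusion go through, and the technical caveats you flag (density of simple functions, $T$ being well-defined since $h_z \in L^{p_0}\cap L^{p_1}$) are exactly the right ones. Incidentally, your argument proves the correct inequality $\|T(h)\|_{L^q}\leqslant M_0^{1-\theta}M_1^{\theta}\|h\|_{L^p}$, which fixes a transcription error in the paper's statement where $p$ and $q$ are swapped in the conclusion and the second hypothesis should read $g\in L^{p_1}$ rather than $g\in L^{q_0}$.
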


\begin{theorem}[Hausdorff-Young inequality]
\[\|\widehat{f}\|_{L^{q} (\mathbb{R}^n)}\leqslant\|f\|_{L^p(\mathbb{R}^n)}\] for all \(1\leqslant p\leqslant2\), \(f\in L^1\cap L^2\), and \(q=p'\).[2]
\end{theorem}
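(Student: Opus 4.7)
The plan is straightforward given the machinery now in hand: apply the Riesz-Thorin interpolation theorem to the Fourier transform, using the two endpoint estimates mentioned in the preceding paragraph, namely the trivial bound $\|\widehat f\|_{L^\infty}\leqslant\|f\|_{L^1}$ and Plancherel's identity $\|\widehat f\|_{L^2}=\|f\|_{L^2}$. These correspond precisely to the endpoint pairs $(p_0,q_0)=(1,\infty)$ and $(p_1,q_1)=(2,2)$, each with constant equal to $1$.

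First I would view the Fourier transform as a linear operator defined on $L^1+L^2$, which contains $L^1\cap L^2$ (the class in which the theorem is stated). Feeding the two endpoint bounds into Riesz-Thorin with $M_0=M_1=1$, the interpolated exponents for $\theta\in(0,1)$ are
\[
\frac{1}{p}=\frac{1-\theta}{1}+\frac{\theta}{2}=1-\frac{\theta}{2},\qquad \frac{1}{q}=\frac{1-\theta}{\infty}+\frac{\theta}{2}=\frac{\theta}{2},
\]
so $\frac{1}{p}+\frac{1}{q}=1$, confirming $q=p'$. Riesz-Thorin then yields
\[
\|\widehat f\|_{L^{p'}}\leqslant M_0^{1-\theta}M_1^{\theta}\|f\|_{L^p}=\|f\|_{L^p},
\]
and as $\theta$ ranges over $(0,1)$, the exponent $p$ sweeps out the open interval $(1,2)$. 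The two endpoint cases $p=1$ and $p=2$ are themselves the input inequalities, so taken together one obtains the claimed bound for the full range $1\leqslant p\leqslant 2$.

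There is no real obstacle here, since Riesz-Thorin does essentially all of the analytic work. The only minor point requiring care is ensuring that the Fourier transform is consistently defined on the sum space $L^1+L^2$ so that the hypotheses of the interpolation theorem are actually met; this is handled by the standard $L^2$ extension which underlies Plancherel's identity, and restricting to $f\in L^1\cap L^2$ in the statement removes any ambiguity.
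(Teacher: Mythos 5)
Your proof is correct and matches exactly what the paper indicates: interpolate the trivial $L^1\to L^\infty$ bound with Plancherel's $L^2\to L^2$ identity via Riesz--Thorin to obtain the $L^p\to L^{p'}$ bound for $1\leqslant p\leqslant 2$. The arithmetic with the interpolation parameter $\theta$ is also carried out correctly.
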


The Hausdorff-Young inequality gives the range of \(p,q\) where the \(L^q\) norm on the whole Fourier domain is less than the \(L^{p}\) norm on the whole spacial domain. This type of problem can be studied on smaller subspaces, such as \(\mathcal{M}^\psi\), the \(d\)-dimensional sub-manifold of \(\mathbb{R}^n\). To derive the Fourier transform of functions on \(\mathcal{M}^\psi\), the corresponding measure has to be defined first.

\begin{definition}[Pullback measure of \(\mathcal{M}^\psi\) from \(\mathbb{R}^d\)]
For the measure spaces \((\mathcal{M}^\psi,\mathcal{B}')\) and \((\mathbb{R}^d,\mathcal{B})\), where \(\mathcal{M}^\psi:=\{(\xi,\psi(\xi)):\xi\in U\subset\mathbb{R}^d\}\), \(\mathcal{B}\) is the Borel \(\sigma\)-algebra of \(\mathbb{R}^n\) and \(\mathcal{B}':=\{A\cap\mathcal{M}^\psi:A\in\mathcal{B}\}\). Let \(P:\mathcal{M}^\psi\to U\) be the canonical projection in the sense that \(P((\xi,\psi(\xi)))=\xi\in U\). Here \(P\) is an invertible map as it is a bijection. \\Define the pullback measure \(d\sigma^\psi\) of the Lebesgue measure \(d\mu\) under \(P\) as \[d\sigma^\psi(A):=d\mu(P(A)),\,\,\,\,(A\in\mathcal{B}').\]
\end{definition}

The Fourier restriction seeks to find the full range of \(p, q\) such that the inequality \(\|\widehat{f}\|_{L^q(\mathcal{M}^\psi,d\sigma^\psi)}\lesssim\|f\|_{L^p(\mathbb{R}^n)}\) holds, for all Schwartz functions \(f\in \mathcal{S}(\mathbb{R}^n)\).

\begin{definition}[Schwartz Space]
Define the Schwartz space \[\mathcal{S}(\mathbb{R}^n):=\{f\in C^\infty(\mathbb{R}^n): \forall\alpha,\beta\in \mathbb{N}^n, \|f\|_{\alpha,\beta}<\infty\},\] where \[\|f\|_{\alpha,\beta}:=\sup_{x\in\mathbb{R}^n}|x^\alpha(\partial^\beta f)(x)|.\]
\end{definition}

Note that \(\mathcal{S}(\mathbb{R}^n)\) is a subspace of \(L^p\) for all \(1\leqslant p\leqslant\infty\) so every \(L^p\) norm makes sense on Schwartz functions. The Schwartz space is also dense in \(L^p\) for all \(1\leqslant p<\infty\). Another property is that \(\widehat{f}\in\mathcal{S}(\mathbb{R}^n)\) for all \(f\in\mathcal{S}(\mathbb{R}^n)\). 

The convention of the restriction and extension in this dissertation is from [3]. To highlight the concept of restriction, given a d-dimensional manifold \(\mathcal{M}^\psi\), define the restriction operator on \(\mathcal{S}(\mathbb{R}^n)\) as: \[[\mathcal{R}^\psi f]:=(\int_{\mathbb{R}^n}f(x)e^{-2\pi i x\cdot \xi}dx)|_{\mathcal{M}^\psi},\]i.e. \emph{restrict} the Fourier transform on a manifold. We say that the expression \(R_{\mathcal{M}^\psi}(p\mapsto q)\) holds if \(\|\mathcal{R}^\psi f \|_{L^q(\mathcal{M}^\psi,d\sigma^\psi)}\lesssim\|f\|_{L^p(\mathbb{R}^n)}\) for all \(f\in\mathcal{S}(\mathbb{R}^n)\) with constant independent of \(f\).

An equivalent formulation of the restriction problem is to use the extension operator on \(L^1(\mathcal{M}^\psi)\), defined as: \[[\mathcal{E}^\psi f](x):=\int_{\mathcal{M}^\psi}f(\xi,\psi(\xi))e^{2\pi i(x'\cdot \xi+x^*\cdot \psi(\xi))}d\sigma^\psi (\xi,\psi(\xi)), \,x=(x',x^*)\in\mathbb{R}^d\times\mathbb{R}^{n-d},\]and the corresponding expression \(R^*_{\mathcal{M}^\psi}(q'\mapsto p')\) holds if \(\|\mathcal{E}^\psi f\|_{L^{p'}(\mathbb{R}^n)}\lesssim\|f\|_{L^{q'}(\mathcal{M}^\psi,d\sigma^\psi)}\) for all \(f\in L^{q'}(\mathcal{M}^\psi)\) with the constant independent of \(f\). Since \(U\) is bounded, \(\|\mathcal{E}^\psi f\|_{L^{p'}(\mathbb{R}^n)}\) is finite for \(f\in L^{q'}(\mathcal{M}^\psi)\).

The term \emph{extension} is intuitive as the extension operator is the formal adjoint of the restriction operator, in the sense that for \(f\in L^1(\mathcal{M}^\psi)\) and \(g\in\mathcal{S}(\mathbb{R}^n)\), by direct expansion and Fubini's theorem,\[\int_{\mathbb{R}^n}g(x)\overline{\mathcal{E}^\psi f(x)}dx=\int_{\mathcal{M}^\psi}\mathcal{R}^\psi g(\xi,\psi(\xi))\overline{ f(\xi,\psi(\xi))}d\sigma^\psi (\xi,\psi(\xi)).\]
An important consequence is the equivalence between restriction and extension, which provides an alternative way to solve the restriction problem.

\begin{lemma}
\(R_{\mathcal{M}^\psi}(p\mapsto q)\iff R^*_{\mathcal{M}^\psi}(q'\mapsto p')\).[3, p4]
\end{lemma}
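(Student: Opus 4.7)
The plan is to exploit the adjoint relation displayed immediately before the lemma, combined with the standard $L^p$--$L^{p'}$ duality of norms. In outline, both directions of the equivalence are obtained by writing the relevant norm as a supremum of integrals against a unit-ball test function, converting that integral from the spatial side to the manifold side (or vice versa) via the adjoint identity, applying Hölder's inequality, and invoking the assumed restriction/extension estimate.

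For the forward direction, assume $R_{\mathcal{M}^\psi}(p\mapsto q)$. Given $f\in L^{q'}(\mathcal{M}^\psi)$, I would use the duality characterisation
\[
\|\mathcal{E}^\psi f\|_{L^{p'}(\mathbb{R}^n)}=\sup_{g}\left|\int_{\mathbb{R}^n}g(x)\overline{\mathcal{E}^\psi f(x)}\,dx\right|,
\]
where the supremum ranges over Schwartz $g$ with $\|g\|_{L^p(\mathbb{R}^n)}\leqslant 1$ (density of $\mathcal{S}$ in $L^p$ for $1\leqslant p<\infty$ lets us restrict the test class to Schwartz, which is what the hypothesis needs). Applying the adjoint identity turns the right-hand integral into $\int_{\mathcal{M}^\psi}\mathcal{R}^\psi g\,\overline{f}\,d\sigma^\psi$. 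Hölder on $(\mathcal{M}^\psi,d\sigma^\psi)$ with exponents $q,q'$ bounds this by $\|\mathcal{R}^\psi g\|_{L^q(\mathcal{M}^\psi,d\sigma^\psi)}\|f\|_{L^{q'}(\mathcal{M}^\psi,d\sigma^\psi)}$, and the assumed estimate bounds $\|\mathcal{R}^\psi g\|_{L^q}$ by $\|g\|_{L^p}\leqslant 1$. Taking the supremum gives $R^*_{\mathcal{M}^\psi}(q'\mapsto p')$.

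The reverse implication is symmetric: assume $R^*_{\mathcal{M}^\psi}(q'\mapsto p')$ and, for Schwartz $g$, write
\[
\|\mathcal{R}^\psi g\|_{L^q(\mathcal{M}^\psi,d\sigma^\psi)}=\sup_{f}\left|\int_{\mathcal{M}^\psi}\mathcal{R}^\psi g(\xi,\psi(\xi))\overline{f(\xi,\psi(\xi))}\,d\sigma^\psi\right|,
\]
with the supremum over $f$ in a suitable dense subclass of the unit ball of $L^{q'}(\mathcal{M}^\psi)$. The adjoint identity converts the integral to $\int_{\mathbb{R}^n}g\,\overline{\mathcal{E}^\psi f}\,dx$, Hölder with exponents $p,p'$ bounds it by $\|g\|_{L^p}\|\mathcal{E}^\psi f\|_{L^{p'}}$, and the extension hypothesis finishes the estimate.

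The only real subtlety — and what I would flag as the point to be careful about — is the density/justification step in each direction: the hypotheses are stated on restricted classes ($\mathcal{S}(\mathbb{R}^n)$ on one side, $L^{q'}(\mathcal{M}^\psi)$ on the other), and we are invoking $L^p$ duality. I would note that $\mathcal{S}(\mathbb{R}^n)$ is dense in $L^p(\mathbb{R}^n)$ for $1\leqslant p<\infty$, so restricting the test function $g$ in the duality supremum to Schwartz incurs no loss; and the adjoint identity displayed before the lemma requires $f\in L^1(\mathcal{M}^\psi)$ and $g\in\mathcal{S}(\mathbb{R}^n)$, a condition met throughout since $U$ is bounded (so $L^{q'}(\mathcal{M}^\psi)\subset L^1(\mathcal{M}^\psi)$ whenever $q'\geqslant 1$). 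Once these density considerations are dispatched, the proof is essentially two invocations of Hölder wrapped around the adjoint identity.
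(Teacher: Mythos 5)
Your proof is correct and follows essentially the same route as the paper: the adjoint identity between $\mathcal{R}^\psi$ and $\mathcal{E}^\psi$, the $L^p$--$L^{p'}$ duality characterisation of the norm, and the observation that $L^{q'}(\mathcal{M}^\psi)\subset L^1(\mathcal{M}^\psi)$ because $U$ is bounded are exactly the ingredients the paper uses. The paper packages the argument by defining $\|\mathcal{R}^\psi\|_{p\to q}$ and $\|\mathcal{E}^\psi\|_{q'\to p'}$ as suprema of the same bilinear pairing and then equating them outright, whereas you unpack this into two one-directional implications closed by an explicit H\"older step; that is a cosmetic rather than a substantive difference.
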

\begin{proof}
The space \(L^{q'}\subset L^1\) for all \(q'\in[1,\infty]\). By the duality argument [4, p270], for all \(f\in S(\mathbb{R}^n)\), \[\|\mathcal{R}^\psi f\|_{L^q(\mathcal{M}^\psi,d\sigma^\psi)}=\sup_{\|g\|_{L^{q'}(\mathcal{M}^\psi)=1}}\int_{\mathcal{M}^\psi}\mathcal{R}^\psi f(\xi,\psi(\xi))\overline{g(\xi,\psi(\xi))}d\sigma^\psi(\xi,\psi(\xi)).\]
If \(R_{\mathcal{M}^\psi}(p\mapsto q)\) holds, then \(\|\mathcal{R}^\psi f \|_{L^q(\mathcal{M}^\psi,d\sigma^\psi)}\lesssim\|f\|_{L^p(\mathbb{R}^n)}\) all \(f\in S(\mathbb{R}^n)\). By normalization, the estimate has the form \(\|\mathcal{R}^\psi f \|_{L^q(\mathcal{M}^\psi,d\sigma^\psi)}\lesssim1\) for all \(f\in S(\mathbb{R}^n)\) and \(\|f\|_{L^p(\mathbb{R}^n)}=1\).

Define the norm of the restriction operator
\[\|\mathcal{R}^\psi\|_{p\to q}:=\sup_{f\in \mathcal{S}(\mathbb{R}^n):\,\|f\|_{L^p}=1,\|g\|_{L^{q'}(\mathcal{M}^\psi)=1}}|\int_{\mathcal{M}^\psi}\mathcal{R}^\psi f(\xi,\psi(\xi))\overline{g(\xi,\psi(\xi))}d\sigma^\psi(\xi,\psi(\xi))|.\]
Similarly, the extension operator has the form
\[\|\mathcal{E}^\psi\|_{q'\to p'}:=\sup_{f\in \mathcal{S}(\mathbb{R}^n):\,\|f\|_{L^p}=1,\|g\|_{L^{q'}(\mathcal{M}^\psi)=1}}|\int_{\mathbb{R}^n}\mathcal{E}^\psi g(x)\overline{f(x)}dx|.\]
Hence, \(R_{\mathcal{M}^\psi}(p\mapsto q)\) holds if and only if \(\|\mathcal{R}^\psi\|_{p\to q}\lesssim1\). For the same argument, \(R^*_{\mathcal{M}^\psi}(q'\mapsto p')\) holds if and only if \(\|\mathcal{E}^\psi\|_{q'\to p'}\lesssim1\). Since the extension operator is the formal adjoint of the restriction operator, then \(\|\mathcal{R}^\psi\|_{p\to q}=\|\mathcal{E}^\psi\|_{q'\to p'}\).

Altogether, \[R_{\mathcal{M}^\psi}(p\mapsto q)\iff\|\mathcal{R}^\psi\|_{p\to q}\lesssim1\iff\|\mathcal{E}^\psi\|_{q'\to p'}\lesssim1\iff R^*_{\mathcal{M}^\psi}(q'\mapsto p').\]
\end{proof}

There is also an alternative form of the extension operator using the Lebesgue measure on \(U\), in the sense that writing \(f(\xi,\psi(\xi))\) as \(f_\psi(\xi)\),

\[\int_{\mathcal{M}^\psi}f(\xi,\psi(\xi))\,\, d\sigma^\psi(\xi,\psi(\xi))=\int_Uf_\psi(\xi)\,\, d\xi,\]
and the extension operator also has the form
\[[\mathcal{E}^\psi f](x):=\int_Uf_\psi(\xi)e^{2\pi i(x'\cdot \xi+x^*\cdot \psi(\xi))}d\xi.\]
This form is used in the sections about bilinear and multilinear restriction, which works on functions defined on \(U\), instead of \(\mathcal{M}^\psi\).

\begin{RT}[Linear Restriction on \(\mathbb{R}^n\)]
For \(1\leqslant p,q\leqslant\infty\), \(R_{\mathbb{R}^n}(p\mapsto q)\) holds if and only if \(q=p'\) and \(1\leqslant p \leqslant2\).[5]
\end{RT}

Together with the Hausdorff-Young inequality, the restriction problem on \(\mathbb{R}^n\) is solved by further showing that if \(R_{\mathbb{R}^n}(p\mapsto q)\) holds, then \(p=q’\) and \(1\leqslant p \leqslant2\). 

The idea of the proof is from [5]. To show the first condition \(p=q’\), assume \(R_{\mathbb{R}^n}(p\mapsto q)\) holds, i.e. \(\|\widehat{f}\|_{L^q(\mathbb{R}^n)}=\|\mathcal{R}^\psi f \|_{L^q(\mathbb{R}^n,d\mu)}\lesssim\|f\|_{L^p(\mathbb{R}^n)}\) for all \(f\in\mathcal{S}(\mathbb{R}^n)\). 

Then for all \(\lambda > 0\), let \(f_\lambda:=f(\frac{x}{\lambda})\). Note that \(f_\lambda\) is also a Schwartz function, with norm \(\|f_\lambda\|_{L^p(\mathbb{R}^n)}=\lambda^{\frac{n}{p}}\|f\|_{L^p(\mathbb{R}^n)}\) and we have the inequality \(\|\widehat{f_\lambda}\|_{L^q(\mathbb{R}^n)}\lesssim\|f_\lambda\|_{L^p(\mathbb{R}^n)}\). 

By the scaling property of the Fourier transform, \(\widehat{f_\lambda}(\xi)=\lambda^n\widehat{f}(\lambda\xi)\), hence by change of variables, \(\|\widehat{f}_\lambda\|_{L^q(\mathbb{R}^n)}=\lambda^{n-\frac{n}{q}}\|\widehat{f}\|_{L^q(\mathbb{R}^n)}\). 

Altogether,  \(\|\widehat{f}\|_{L^q(\mathbb{R}^n)}=\lambda^{\frac{n}{q}-n}\|\widehat{f_\lambda}\|_{L^q(\mathbb{R}^n)}\lesssim\lambda^{\frac{n}{q}-n}\|\widehat{f}\|_{L^p(\mathbb{R}^n)}= \lambda^{\frac{n}{p}+\frac{n}{q}-n}\|f\|_{L^p(\mathbb{R}^n)}\) for all \(\lambda>0\). 

If the exponent \(\frac{n}{p}+\frac{n}{q}-n\neq0\), then \(\|f\|_{L^p(\mathbb{R}^n)}=0\) by either taking the limit of \(\lambda\) to \(0\) or \(\infty\), which is not true in general. Hence  \(\frac{n}{p}+\frac{n}{q}-n=0\), which implies \(\frac{1}{p}+\frac{1}{p'}=1\).

Given the Hausdorff-Young inequality,  showing the necessity of the condition \(1\leqslant p \leqslant2\) is equivalent to showing that \(R_{\mathbb{R}^n}(p\mapsto q)\) fails for any \(p>2\). Rather than using deterministic methods, Khintchine's inequality provides a rapid way to provide a counterexample that the chosen distribution of Schwartz function constructs a contradiction.

\begin{lemma}[Khintchine's inequality for functions]
For \(0<p<\infty\), let \(\{\varepsilon_j\}^m_{j=1}\) be a sequence of independent random variables that takes values \(\{-1,+1\}\) with equal probability of \(\frac{1}{2}\), then for any \(\{f_j\}^m_{j=1}\subset L^p(X,\mu)\),[5]

\[(\mathbb{E}\|\sum_{j=1}^{m}\varepsilon_jf_j\|^p_{L^p(X,\mu)})^\frac{1}{p}\sim_p\|(\sum_{j=1}^{m}|f_j|^2)^\frac{1}{2}\|_{L^p(X,\mu)}.\]
\end{lemma}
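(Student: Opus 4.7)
The plan is to reduce the functional inequality to the classical scalar Khintchine inequality by Fubini's theorem, and then to prove the scalar version via a subgaussian moment generating function bound combined with a Hölder interpolation trick.

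First, since the sum over $j$ is finite, Fubini's theorem on the product measure $\mu \otimes \mathbb{P}$ gives
\[
\mathbb{E}\left\|\sum_{j=1}^m \varepsilon_j f_j\right\|_{L^p(X,\mu)}^p = \int_X \mathbb{E}\left|\sum_{j=1}^m \varepsilon_j f_j(x)\right|^p d\mu(x).
\]
It therefore suffices to prove the scalar version: for any real scalars $a_1,\dots,a_m$ and $S:=\sum_{j} \varepsilon_j a_j$,
\[
\mathbb{E}|S|^p \sim_p \left(\sum_{j=1}^m a_j^2\right)^{p/2}.
\]
Applying this pointwise with $a_j = f_j(x)$ (splitting into real and imaginary parts when $f_j$ is complex-valued, which costs only a constant depending on $p$), integrating in $x$, and taking $p$-th roots gives the functional claim.

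For the scalar case, the $p=2$ identity $\mathbb{E}|S|^2 = \sum_j a_j^2$ follows directly from orthogonality of the Rademacher system. For the upper bound when $p \geq 2$ I would use the subgaussian estimate coming from the elementary bound $\cosh x \leq e^{x^2/2}$ together with independence:
\[
\mathbb{E}e^{tS} = \prod_{j=1}^m \cosh(ta_j) \leq \exp\!\left(\tfrac{t^2}{2}\sum_j a_j^2\right).
\]
Optimising the resulting Chernoff bound in $t$ yields the Gaussian tail $\mathbb{P}(|S| > \lambda) \leq 2\exp(-\lambda^2/(2\sum_j a_j^2))$, and integrating this against $p\lambda^{p-1}$ gives $\mathbb{E}|S|^p \lesssim_p (\sum_j a_j^2)^{p/2}$. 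The upper bound for $0 < p < 2$ is then immediate from the $p = 2$ case by Jensen's inequality applied to the probability measure $\mathbb{P}$.

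The main obstacle is the reverse direction for $0 < p < 2$, which the moment generating function argument cannot reach directly. The standard remedy is Hölder interpolation (log-convexity of $L^p$ norms on probability spaces): choose any $q > 2$ and $\theta \in (0,1)$ satisfying $\tfrac{1}{2} = \tfrac{\theta}{p} + \tfrac{1-\theta}{q}$, so that
\[
\|S\|_{L^2(\mathbb{P})} \leq \|S\|_{L^p(\mathbb{P})}^\theta \, \|S\|_{L^q(\mathbb{P})}^{1-\theta}.
\]
Substituting the already-established upper bound $\|S\|_{L^q} \lesssim_q \|S\|_{L^2}$ forces $\|S\|_{L^p} \gtrsim_{p,q} \|S\|_{L^2}$, supplying the missing lower bound. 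Combining both directions yields the scalar Khintchine inequality with constants depending only on $p$, and the Fubini step above then completes the proof of the functional statement.
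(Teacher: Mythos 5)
The paper does not actually prove this lemma: it is stated with a citation to Tao's lecture notes [5] and then applied directly, so there is no in-paper proof to compare against. Your argument is the standard proof of Khintchine's inequality and it is correct: Fubini reduces to the scalar case; $p=2$ is Rademacher orthogonality; the subgaussian bound $\mathbb{E}e^{tS}=\prod_j\cosh(ta_j)\le e^{t^2\sigma^2/2}$ (with $\sigma^2=\sum_j a_j^2$) plus Chernoff and layer-cake integration handle the upper bound for $p\ge 2$; Jensen handles $0<p<2$; and the log-convexity trick $\|S\|_2\le\|S\|_p^\theta\|S\|_q^{1-\theta}$ with a fixed $q>2$ recovers the lower bound for small $p$. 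One point worth making explicit is that the log-convexity inequality you invoke is obtained from H\"older applied to $\int|S|^2=\int|S|^{2\theta}|S|^{2(1-\theta)}$, which does not require $p\ge1$, so it is valid down to $p>0$ as needed; you might also state the trivial lower bound $\|S\|_p\ge\|S\|_2$ for $p\ge 2$ (monotonicity of $L^p$ norms on a probability space) to close that case, which you only implicitly acknowledge. With those two sentences added the proof is complete, and it is very likely the same argument as in the cited source.
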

Let \(\varphi\) be a mollified indicator function, also known as a bump function, supported on the open unit ball \(B_1(0)\). Note that a bump function is also a Schwartz function as \(\varphi\in C^\infty(\mathbb{R}^n)\) by construction, and by the smoothness of \(\varphi\),\[\|\varphi\|_{\alpha,\beta}=\sup_{x\in\mathbb{R}^n}|x^\alpha(\partial^\beta\varphi)(x)|\leqslant\|x^\alpha\|_{L^\infty(B_1(0))}\|\partial^\beta f(x)\|_{L^\infty(B_1(0))}<\infty.\]

Let \(f(x)=\sum_j^m\varepsilon_j(\varphi(x-x_j))\), where \(\{\varepsilon_j\}_{j=1}^m\) is a sequence of independent random variables as in the assumption of Khintchine's inequality, and \(\{x_j\}_{j=1}^m\) is a sequence of points in \(\mathbb{R}^n\) such that \(\|x_j-x_k\|\geqslant2\) for all \(1\leqslant j<k\leqslant m\). 

Here, \(f\) can be understood as a random variable taking values which are sums of bump functions. By linearity, when choosing a particular combination of the sequence \(\{\varepsilon_j\}_{j=1}^m\), \(f\) is a sum of bump functions with disjoint support, which is also a Schwartz function.

By the disjointness of \(\{\varphi(x-x_j))\}_{j=1}^m\),
\begin{align*}
\|f\|_{L^p(\mathbb{R}^n)}&=(\int_{\mathbb{R}^n}|\sum_{j=1}^m\varepsilon_j(\varphi(x-x_j))|^pdx)^\frac{1}{p}, &&\\
&=(\sum_{j=1}^m\int_{\mathbb{R}^n}|(\varphi(x-x_j))|^pdx)^\frac{1}{p},&&\\
&\sim_{p,\varphi}(\sum_{j=1}^mm)^\frac{1}{p}=m^\frac{1}{p}.&&
\end{align*}
If the estimate \(R_{\mathbb{R}^n}(p\mapsto p')\) holds, then deterministically, \(\|\widehat{f}\|_{L^{p'}(\mathbb{R}^n)}\lesssim\|f\|_{L^p(\mathbb{R}^n)}\sim_{p,\varphi} m^\frac{1}{p}\).

One the other side, utilizing the change of variables, Khintchine's inequality says that
\begin{align*}
(\mathbb{E}\|\widehat{f}\|_{L^{p'}(\mathbb{R}^n)}^{p'})^\frac{1}{p'}&=(\mathbb{E}\|\int_{\mathbb{R}^n}\sum_{j=1}^m\varepsilon_j(\varphi(x-x_j))e^{-2\pi ix\cdot \xi}dx\|_{L^{p'}(\mathbb{R}^n)}^{p'})^\frac{1}{p'},&&\\
&=(\mathbb{E}\|\sum_{j=1}^m\varepsilon_je^{2\pi ix_j\cdot\xi}\widehat{\varphi}(\xi)\|_{L^{p'}(\mathbb{R}^n)}^{p'})^\frac{1}{p'},&&\\
&\sim_{n,p}\|(\sum_{j=1}^m|e^{2\pi ix_j\cdot\xi}\widehat{\varphi}(\xi)|^2)^\frac{1}{2}\|_{L^{p'}(\mathbb{R}^n)}^{p'}\sim_{n,p,\varphi} m^\frac{1}{2}.&&
\end{align*}
Then the above two estimates gives \(m^\frac{1}{2}\lesssim_{n,p,\varphi} m^\frac{1}{p}\) for arbitrary \(m\), and the estimate is independent of \(m\). With this estimate's complete form \(m^\frac{1}{2}\leqslant C(n,p,\varphi) m^\frac{1}{p}\), for every \(n,\varphi\), and assuming \(p>2\), the inequality fails for all \(m>C(n,p,\varphi)^{\frac{1}{p}-\frac{1}{2}}\).

Hence, by the previous arguments, for \(1\leqslant p,q\leqslant\infty\), \(R_{\mathbb{R}^n}(p\mapsto q)\) holds if and only if \(p=q’\) and \(1\leqslant p \leqslant2\). 

Restriction on \(\mathbb{R}^n\) is relatively simple, as the statement has been completely proved and is also strict. However, the restriction problem on manifolds is far more complex and they remained as conjectures on arbitrary dimensions. The more commonly studied cases are the restriction problems on the truncated paraboloid \(\mathbb{P}^{n-1}\)and the sphere \(\mathbb{S}^{n-1}\), where \[\mathbb{P}^{n-1}:=\{(\xi_1,\cdots,\xi_{n-1},\sum_{n-1}^{j=1}\xi^2_j):|\xi_j|<1\,\forall\,j\leqslant n-1\},\]
\[\mathbb{S}^{n-1}:=\{(\xi_1,\cdots,\xi_{n}):\sum_{n}^{j=1}\xi^2_j=1\}.\]
\(\mathbb{S}^{n-1}\) is not a manifold of the form \(\mathcal{M}^\psi:=\{(\xi,\psi(\xi)):\xi\in U\}\). However, the restriction on \(\mathbb{S}^{n-1}\) is the same as the restriction on the open hemisphere \(\mathbb{S}^{n-1}_\pm:=\{(\xi,\pm\sqrt{1-|\xi|^2}):|\xi|^2<1\}\), which is of the form \(\mathcal{M}^\psi\).

\begin{conjecture}[Linear Restriction Conjecture for \(\mathbb{P}^{n-1}\) and \(\mathbb{S}^{n-1}\)]
For \(1\leqslant p,q\leqslant\infty\), \(R_{\mathbb{P}^{n-1}}(p\mapsto q)\) and \(R_{\mathbb{S}^{n-1}}(p\mapsto q)\) hold if and only if \(\frac{n-1}{q}\geqslant\frac{n+1}{p'}\) and \(p'>\frac{2n}{n-1}\).
\end{conjecture}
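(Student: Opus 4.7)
The plan is to split the conjecture into the easy direction (necessity of the two scaling constraints) and the hard direction (sufficiency, which is the genuine content of the restriction conjecture and is open for $n \geqslant 3$). For necessity I would produce two explicit families of $f$ that saturate each constraint in turn. For sufficiency I would only be able to verify the estimate on the range supplied by Stein-Tomas, together with interpolation against the trivial bound $R_{\mathcal{M}^\psi}(1 \mapsto \infty)$. Both manifolds can be treated together by working with $\mathbb{P}^{n-1}$ and, for the sphere, passing to the hemisphere chart $\mathbb{S}^{n-1}_\pm$, which has the graph form $\mathcal{M}^\psi$ with non-vanishing Gaussian curvature—the only feature used below.

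To obtain the Knapp condition $\frac{n-1}{q} \geqslant \frac{n+1}{p'}$, fix $0 < \delta \leqslant 1$, let $\tau_\delta := [-\delta, \delta]^{n-1}$, and take $g_\delta$ to be a smooth bump of unit height adapted to $\tau_\delta$, viewed as a function on $\mathbb{P}^{n-1}$ through the projection $P$ from Definition 2.1. Then $\|g_\delta\|_{L^{q'}(d\sigma^\psi)} \sim \delta^{(n-1)/q'}$. On the dual parabolic tube $T_\delta := \{(x', x^*) \in \mathbb{R}^{n-1} \times \mathbb{R} : |x'| \leqslant c\delta^{-1},\ |x^*| \leqslant c\delta^{-2}\}$ the phase $x' \cdot \xi + x^* |\xi|^2$ varies only by $O(1)$ over $\tau_\delta$, so $|\mathcal{E}^\psi g_\delta(x)| \gtrsim \delta^{n-1}$ for $x \in T_\delta$; since $|T_\delta| \sim \delta^{-(n+1)}$, this yields $\|\mathcal{E}^\psi g_\delta\|_{L^{p'}(\mathbb{R}^n)} \gtrsim \delta^{n-1-(n+1)/p'}$. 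Inserting both estimates into Lemma 2.3 and letting $\delta \to 0$ extracts exactly $\frac{n-1}{q} \geqslant \frac{n+1}{p'}$. For the second constraint $p' > \frac{2n}{n-1}$ I would instead test with $f \equiv 1$ on $\mathcal{M}^\psi$: then $\mathcal{E}^\psi 1$ is essentially $\widehat{d\sigma^\psi}$, which by a standard stationary-phase computation decays like $(1+|x|)^{-(n-1)/2}$ and therefore lies in $L^{p'}(\mathbb{R}^n)$ only when $p'(n-1)/2 > n$.

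For sufficiency within reach of current technology I would run the Stein-Tomas $TT^*$ argument: $\mathcal{E}^\psi(\mathcal{E}^\psi)^\ast$ is convolution with $\widehat{d\sigma^\psi}$, and combining the decay above with Plancherel, Hardy-Littlewood-Sobolev, and interpolation against the trivial $R_{\mathcal{M}^\psi}(1 \mapsto \infty)$ yields $R_{\mathcal{M}^\psi}(p \mapsto 2)$ on the range $p' \geqslant \frac{2(n+1)}{n-1}$. The genuinely hard part, which this dissertation cannot resolve, is to close the gap between the Stein-Tomas line and the conjectured critical line $p' = \frac{2n}{n-1}$. The Khintchine randomisation used in the $\mathbb{R}^n$ case is useless here, because one cannot translate bump functions on a curved compact manifold while preserving the near-orthogonality it requires; instead one must extract information from the curvature of $\mathcal{M}^\psi$ through the bilinear and multilinear machinery developed in the subsequent sections, which improves on the Stein-Tomas range but still falls short of the full conjecture in high dimensions.
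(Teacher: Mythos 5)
You correctly identify that this statement is a conjecture, not a theorem, so the only genuine content one can supply is the necessity direction together with a candid acknowledgement that sufficiency is open beyond special cases. Your two necessity constructions are exactly the ones the paper cites to [3, p7--p9]: the Knapp bump on the $\delta$-cap, paired with the dual parabolic tube of dimensions $\delta^{-1} \times \cdots \times \delta^{-1} \times \delta^{-2}$, gives $\frac{n-1}{q} \geqslant \frac{n+1}{p'}$; and testing $f \equiv 1$ and using the stationary-phase decay $|\widehat{d\sigma^\psi}(x)| \lesssim (1+|x|)^{-(n-1)/2}$ gives $p' > \frac{2n}{n-1}$. Your Knapp computation is correct in detail (the exponent count $\delta^{n-1-(n+1)/p'} \lesssim \delta^{(n-1)/q'}$ as $\delta \to 0$ produces the constraint on the nose). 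So structurally your proposal matches what the paper offers around this conjecture, with the added benefit that you actually carry out the Knapp calculation rather than citing it.

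One small misattribution: in your sufficiency paragraph you describe the Stein--Tomas argument as ``Plancherel, Hardy--Littlewood--Sobolev, and interpolation.'' Hardy--Littlewood--Sobolev appears in the paper's proof of the \emph{two-dimensional curve} restriction theorem (Restriction Theorem 2), not in its version of Stein--Tomas. The paper's Stein--Tomas argument instead proceeds by the dyadic decomposition $\widehat{d\lambda} = \sum_{j \geq 0} K_j$, an $L^1 \to L^\infty$ bound on each piece from the surface-measure decay via Young's inequality, an $L^2 \to L^2$ bound via Plancherel, Riesz--Thorin interpolation between these, and then a geometric-series summation in $j$ (with the endpoint requiring complex interpolation, as the paper notes). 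A direct HLS treatment of the convolution kernel $\widehat{d\sigma^\psi}$ does not obviously work because the kernel is not homogeneous and the argument also needs its Fourier support information; the dyadic decomposition is precisely what reconciles the two. This does not affect the correctness of your necessity arguments, but you should not conflate the two proof techniques in the dissertation.
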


\begin{figure}[hbt!]
    \centering
    \includegraphics[width=1\linewidth]{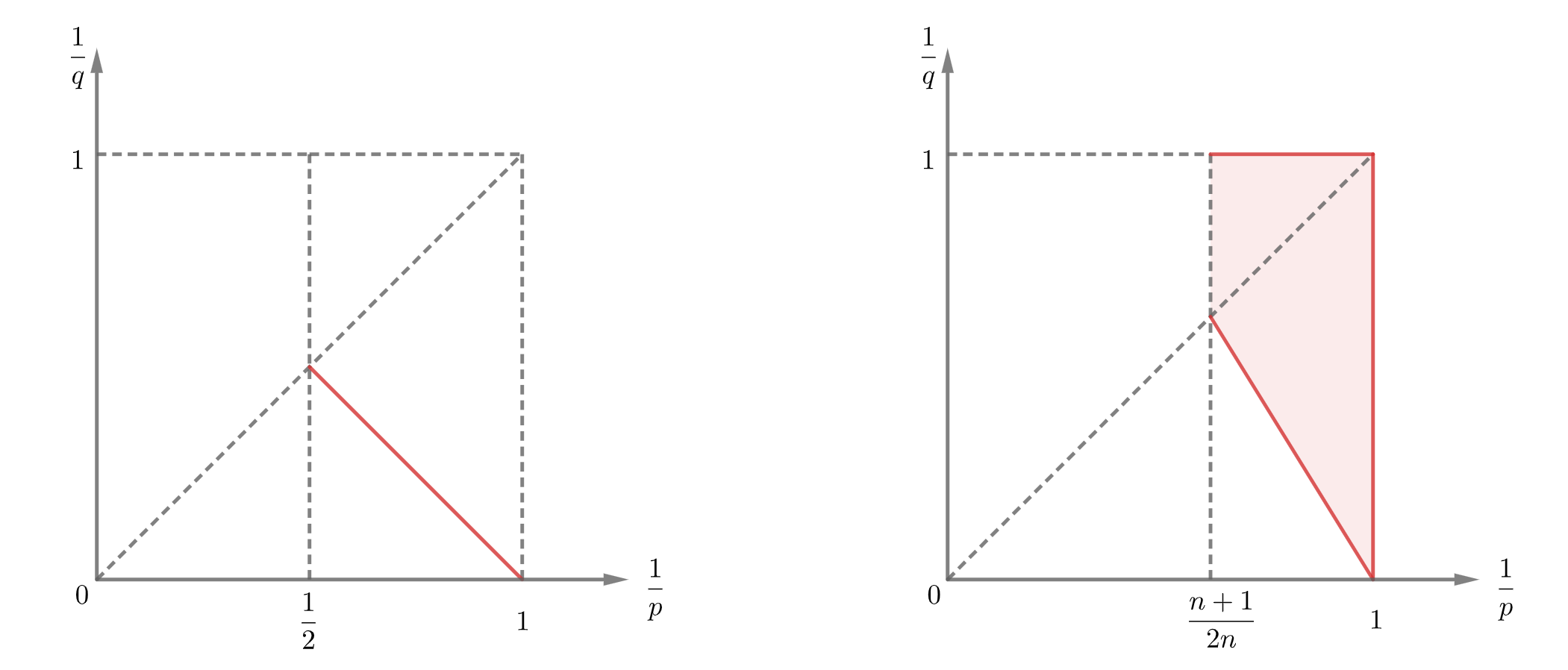}
    \caption{Left: Restriction range for \(\mathbb{R}^n\).       
    Right: Restriction range for \(\mathbb{P}^{n-1}\) and \(\mathbb{S}^{n-1}\).}
    \label{fig:enter-label}
\end{figure}

It can be clearly seen from the graph that the restriction range on specific manifolds is larger than the whole space.

There is also an equivalent form of Conjecture 1.

\begin{conjecture}[Linear Restriction Conjecture for \(\mathbb{P}^{n-1}\) and \(\mathbb{S}^{n-1}\) variant]
For \(1\leqslant p\leqslant\infty\), \(R^*_{\mathbb{P}^{n-1}}(\infty\mapsto p')\) and \(R^*_{\mathbb{S}^{n-1}}(\infty\mapsto p')\) hold if and only if \(p'>\frac{2n}{n-1}\).[4, p270]
\end{conjecture}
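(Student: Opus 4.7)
The plan is to treat the two implications of the biconditional separately, since they have genuinely different natures: necessity admits a direct counterexample, while sufficiency is the open linear restriction conjecture itself and admits only partial proofs.

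For necessity (\(R^*_{\mathcal{M}}(\infty\mapsto p')\) holds \(\Rightarrow p'>\frac{2n}{n-1}\)), I would test the extension operator on the constant function \(f\equiv 1\) on \(\mathcal{M}\in\{\mathbb{P}^{n-1},\mathbb{S}^{n-1}\}\). Then \([\mathcal{E}^\psi 1](x)=\widehat{d\sigma^\psi}(-x)\), and the key input is the standard decay estimate \(|\widehat{d\sigma^\psi}(x)|\lesssim(1+|x|)^{-(n-1)/2}\) with a matching lower bound on a set of positive density at infinity: for the paraboloid this reduces to completing the square and Fresnel integrals, and for the sphere it follows from stationary phase using the non-vanishing Gaussian curvature. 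Consequently \(\|\mathcal{E}^\psi 1\|_{L^{p'}(\mathbb{R}^n)}^{p'}\) is comparable to \(\int_{|x|\geqslant 1}|x|^{-p'(n-1)/2}\,dx\), which is finite precisely when \(p'(n-1)/2>n\), i.e.\ \(p'>\frac{2n}{n-1}\). Since \(\|1\|_{L^\infty(\mathcal{M})}=1\), the assumed extension estimate would force finiteness, so any \(p'\leqslant\frac{2n}{n-1}\) is excluded. A Knapp-cap example \(f=\mathbf{1}_{C_\delta}\) on a diameter-\(\delta\) cap serves only as a cross-check, producing the weaker necessary condition \(p'\geqslant\frac{n+1}{n-1}\), which is automatically subsumed.

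For sufficiency, the main tool available without multilinear machinery is the Stein-Tomas theorem: a \(TT^*\) argument on \(L^2(d\sigma)\) combined with the \(\widehat{d\sigma}\) decay produces the extension bound \(L^2(d\sigma)\to L^{2(n+1)/(n-1)}(\mathbb{R}^n)\), and real interpolation with the trivial \(L^\infty\to L^\infty\) bound upgrades this to \(R^*(\infty\mapsto p')\) in the range \(p'\geqslant\frac{2(n+1)}{n-1}\). Closing the gap down to the conjectured threshold \(\frac{2n}{n-1}\) would require strictly more: one routes through the bilinear \(L^2\) extension estimates developed in Section 3 and, in higher dimensions, through the multilinear restriction theorem of Section 4 combined with a broad-narrow decomposition. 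The hard part is precisely this last step: extracting a sharp linear bound from transversal multilinear information forces a delicate analysis of functions concentrated near lower-dimensional algebraic sets, and no method is currently known that reaches the full range \(p'>\frac{2n}{n-1}\) for \(n\geqslant 3\), leaving the sufficiency direction of the biconditional open and justifying the status of the statement as a conjecture.
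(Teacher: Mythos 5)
Your treatment is correct and matches the paper's: the paper likewise states this only as a conjecture, obtaining the necessity of \(p'>\frac{2n}{n-1}\) by analysing the Fourier transform of the full surface measure (with Knapp caps giving only the weaker condition), and leaving sufficiency open except for the partial ranges it develops later via Stein--Tomas and, for \(n=2\), the bilinear argument. Your proposal reproduces exactly this division of labour, so there is nothing to add.
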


By testing the estimates using the Knapp example on cubes, which is applicable to arbitrary hypersurfaces [3, p7-p8], it is shown that the condition \(\frac{n-1}{q}\geqslant\frac{n+1}{p'}\) in Conjecture 1 is necessary to ensure the extension estimate holds. The necessity of the condition \(p'>\frac{2n}{n-1}\) can be shown by analyzing the Fourier transform of full surface measure, where the extension estimates diverge when \(p'\leqslant\frac{2n}{n-1}\).[3, p9] Though Conjecture 1 remains unsolved for arbitrary dimensions, there are two types of arguments when attempting to solve the conjecture, both of which will be presented below. 

The first type of method is to solve the restriction problem sharply on a fixed dimension. In the 2-dimensional case, the restriction conjecture is fully verified for curves with nonzero curvatures.

\begin{RT}[Linear Restriction on 2-dimensional curves with non-zero curvatures]
For \(1\leqslant p,q\leqslant\infty\), \(\psi:[-1,1]\to\mathbb{R}\) with non-zero curvature, then  \(R^*_{\mathcal{M}^\psi}(q'\mapsto p')\) holds if and only if \(p'>4\) and \(\frac{3}{p'}+\frac{1}{q'}\leqslant1\). [3, p10-p11]
\end{RT}

Using the previous notation of manifolds, let \(\psi:[-1,1]\to \mathbb{R}\) a \(C^2\) function such that \(\inf_{|\xi|\leqslant1}|\psi''(\xi)|\geqslant v>0\). Then the extension operator has the form: \[\mathcal{E}^\psi f(x_1,x_2):=(\int_{\mathcal{M}^\psi}f(\xi)e^{2\pi i (x_1\xi+x_2\psi(\xi))}d\sigma^\psi(\xi,\psi(\xi))).\]
Since the restriction outside of the conjectured range is false, it suffices to show the extension estimate holds on the endpoint result \(\frac{3}{p'}+\frac{1}{q'}=1\).

Let \(2r'=p'\), then the endpoint result is equivalent to \(\frac{2r}{3-r}=q'\), and \[\|\mathcal{E}^\psi f\|^2_{L^{p'}(\mathbb{R}^2)}=\|(\mathcal{E}^\psi f)^2\|_{L^{r'}(\mathbb{R}^2)}.\]
The assumption \(\psi\in C^2\) and \(\inf_{|\xi|\leqslant1}|\psi''(\xi)|\geqslant v>0\) implies that \(\psi'\)  is either convex or concave in the whole domain. Hence by Taylor's expansion, \(|\psi'(t)-\psi'(s)|\geqslant v|t-s|\). 

By change of variables using \((t,s)=(T(\xi_1,\xi_2)\), where \((\xi_1,\xi_2)=(t+s,\psi(t)+\psi(s))\). Then T is invertible for \(t>s\) and \(\det[T'(\xi_1,\xi_2)]=(\psi'(t)-\psi'(s))^{-1}\). Hence
\begin{align}
(\mathcal{E}^\psi f)^2(x_1,x_2)&=2\iint_{t>s}f(t)f(s)e^{2\pi i(x_1(t+s)+x_2(\psi(t)+\psi(s)))}dtds, &&\\
&=\iint(f\otimes f)(T(\xi_1,\xi_2))|\det[T'(\xi_1,\xi_2)]|e^{2\pi i(x_1\xi_1+x_2\xi_2)}d\xi_1d\xi_2,&&
\end{align}
which is the inverse Fourier transform of \[F(\xi_1,\xi_2):=(f\otimes f)(T(\xi_1,\xi_2))|\det[T'(\xi_1,\xi_2)]|.\]
Here the tensor product of two functions is defined as \(f\otimes g:\mathbb{R}^2\to \mathbb{C}\), \(f\otimes g(x,y):=f(x)g(y)\).

By construction \(1\leqslant r\leqslant2\), the Hausdorff-Young inequality implies
\[\|(\mathcal{E}^\psi f)^2\|_{L^{r'}(\mathbb{R}^2)}=(\int |F(\xi)e^{-2\pi ix\cdot\xi}e^{4\pi ix\cdot\xi}d\xi|^{r'})^{\frac{1}{r'}}\leqslant\|\widehat{F}\|_{L^{r'}(\mathbb{R}^2)}\leqslant\|F\|_{L^{r}(\mathbb{R}^2)}.\]
H\"older's inequality says that
\begin{theorem}[H\"older's inequality]
For \(1\leqslant p,q\leqslant\infty\) and \(f,g\) measurable functions.
If \(\frac{1}{p}+\frac{1}{q}=1\), then \(\|fg\|_{L^1}\leqslant\|f\|_{L^p}\|g\|_{L^q}\).
\end{theorem}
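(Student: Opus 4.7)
The plan is to reduce the inequality to a pointwise estimate (Young's inequality for products), which after normalization yields the claim by a single integration.

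First I would dispose of the degenerate cases. If $p=1$ and $q=\infty$ (or symmetrically), then $|f(x)g(x)|\leqslant|f(x)|\,\|g\|_{L^\infty}$ holds almost everywhere, and integration gives the bound directly. If either $\|f\|_{L^p}=0$ or $\|g\|_{L^q}=0$, then $fg=0$ almost everywhere and the inequality is trivial; if either norm is infinite, the right-hand side is already $\infty$. So the substantive case is $1<p,q<\infty$ with both norms finite and strictly positive. By positive homogeneity of the norms, I would then replace $f$ by $f/\|f\|_{L^p}$ and $g$ by $g/\|g\|_{L^q}$, reducing the problem to showing $\|fg\|_{L^1}\leqslant 1$ whenever $\|f\|_{L^p}=\|g\|_{L^q}=1$.

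The key pointwise tool is \emph{Young's inequality for products}: for $a,b\geqslant 0$ and conjugate exponents $\frac{1}{p}+\frac{1}{q}=1$,
\[
ab \;\leqslant\; \frac{a^p}{p}+\frac{b^q}{q}.
\]
I would derive this from the concavity of $\log$ on $(0,\infty)$: treating $\frac{1}{p},\frac{1}{q}$ as a probability distribution on the two points $a^p$ and $b^q$, Jensen's inequality gives
\[
\log\!\left(\frac{a^p}{p}+\frac{b^q}{q}\right)\;\geqslant\;\frac{1}{p}\log(a^p)+\frac{1}{q}\log(b^q)\;=\;\log(ab),
\]
with the edge cases $a=0$ or $b=0$ checked by inspection. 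Applying this pointwise to $a=|f(x)|$, $b=|g(x)|$ and integrating in $x$ yields
\[
\|fg\|_{L^1}\;\leqslant\; \frac{1}{p}\|f\|_{L^p}^p+\frac{1}{q}\|g\|_{L^q}^q\;=\;\frac{1}{p}+\frac{1}{q}\;=\;1,
\]
which is the desired bound in the normalized regime; undoing the normalization delivers the general statement.

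The main obstacle, insofar as there is any, is establishing Young's inequality itself; however, its proof via log-concavity is elementary and uses nothing beyond Jensen's inequality at two points. No deep machinery is required, which is consistent with the role of this theorem in the excerpt: it is invoked as an off-the-shelf bound within the larger proof of the two-dimensional restriction estimate.
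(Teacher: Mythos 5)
Your proof is correct: the reduction to unit-norm functions via homogeneity, followed by the pointwise Young inequality $ab\leqslant\frac{a^p}{p}+\frac{b^q}{q}$ (derived from concavity of $\log$) and a single integration, is the standard argument, and you handle the degenerate cases ($p=1$, zero or infinite norms) cleanly. Note that the paper itself does not prove H\"older's inequality; it is stated as a classical result and used as a black box in the proof of the two-dimensional restriction estimate, which you correctly anticipated in your closing remark.
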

By further change of variables and H\"older's inequality,
\begin{align*}
\|F\|^r_{L^r(\mathbb{R}^2)}&=\iint|f(t)f(s)|^r|\psi'(t)-\psi'(s)|^{1-r}dtds,&&\\
&\lesssim_v\iint|f(t)f(s)|^r|t-s|^{1-r}dtds,&&\\
&\leqslant\|f^r\|_{L^\frac{q'}{r}(\mathbb{R})}\||f|^r*|t|^{1-r}\|_{L^{(\frac{q'}{r})'}(\mathbb{R})}.&&
\end{align*}
The Hardy-Littlewood-Sobolev inequality[8] states that \[\||f|^r*|t|^{1-r}\|_{L^{(\frac{q'}{r})'}(\mathbb{R})}\lesssim\|f^r\|_{L^{\frac{q'}{r}}(\mathbb{R})}.\]
Hence the estimate\[\|\mathcal{E}^\psi f\|^2_{L^{p'}(\mathbb{R}^2)}=\|(\mathcal{E}^\psi f)^2\|_{L^{r'}(\mathbb{R}^2)}\lesssim\|F\|_{L^{r}(\mathbb{R}^2)}\lesssim_v\|f^r\|^\frac{2}{r}_{L^{\frac{q'}{r}}(\mathbb{R})},\]
which implies  \(\|\mathcal{E}^\psi f\|_{L^{p'}(\mathbb{R}^2)}\lesssim_v\|f\|_{L^{q'}([-1,1])}\) for each \(p'>4\) and \(\frac{3}{p'}+\frac{1}{q'}\leqslant1\). This is precisely the statement of Conjecture 1 when \(n=2\).

The proofs of higher-dimensional cases are significantly more difficult, and all attempts are focused on approximating the restriction range to the conjectured range. On other manifolds, notably the truncated cone, the restriction conjecture is verified in some higher dimensions.

The second type method is to solve the restriction problem by showing the restriction estimate holds on a subset of the conjectured range, on arbitrary dimensions.

\begin{RT}[Linear Restriction of the paraboloid for \(q=2\)]
For \(1\leqslant p,q\leqslant\infty\),  \(R_{\mathcal{M}^\psi}(p\mapsto q)\) holds when \(q=2\) and \(1\leqslant p\leqslant\frac{2(n+1)}{n+3}\). [3, p12-p14]
\end{RT}

The proof is called the Stein-Tomas argument, which proves the conjecture on the sphere and paraboloid when \(q=2\) and \(1\leqslant p<\frac{2(n+1)}{n+3}\). The endpoint result \(p=\frac{2(n+1)}{n+3}\) requires complex interpolation.[6] Note that the restriction range is identical to the conjectured range when \(q=2\).

\begin{definition}[Fourier transform of measure]
The Fourier transform of the measure \(d\sigma^\psi\) on a manifold \(\mathcal{M}^\psi\) is defined as  
\[\widehat{d\sigma^\psi}(\xi):=\int_{\mathcal{M}^\psi}e^{-2\pi ix\cdot\xi}d\sigma^\psi(x).\]
[7, p347]
\
\end{definition}
Without loss of generality, let \(x_0\) be a point on a 1-dimensional manifold \(\mathcal{M}^\psi\) such that \(\nabla\psi(x_0)=0\). This can be achieved by a rotation and a translation of the whole space \(\mathbb{R}^n\). Then the principal curvatures of \(\mathcal{M}^\psi\) at \(x_0\) are defined as the eigenvalues \(\{v_l\}^{n-1}_{l=1}\) of the \((n-1)\times(n-1)\) Hessian matrix \[\left[\frac{\partial^2\psi(x_0)}{\partial x_j\partial x_k}\right].\]
The Gaussian curvature of \(\mathcal{M}^\psi\) at \(x_0\) is defined as \(\prod^{n-1}_{l=1}v_l\).[7, p348-p350] If the Gaussian curvature is nonzero everywhere on \(\mathcal{M}^\psi\), \(\psi\in C^\infty\), then [7, p348-p350]\[|\widehat{d\sigma^\psi}(x)|\lesssim(1+|x|)^{-\frac{n-1}{2}},\]
where \(d\sigma^\psi\) is the pullback measure of \(d\mu\). Note that both \(\mathbb{S}^n\) and \(\mathbb{P}^n\) have nonzero Gaussian curvatures.

Let \(U\subset\mathbb{R}^{n-1}\) be the cube where the paraboloid \(\mathbb{P}^{n-1}\) is supported. Since for \(\mathcal{M}^\psi=\mathbb{P}^{n-1}\),  there is a continuous extension \(\widetilde{\psi}:2U\to \mathbb{R}\), which is the paraboloid on \(2U\), where \(2U\) is the dilate of U by a factor of 2 from the centre, and \(\widetilde{\psi}=\psi\) on \(U\).

Let \(w\in C^\infty(\mathbb{R}^n)\) be the smooth cutoff function such that \(\mathbbm{1}_U\leqslant w\leqslant\mathbbm{1}_{2U}\). More formally, \(w=\mathbbm{1}_V*\rho_\varepsilon\), where \(U\subsetneq V\subset2U\) and \(\rho_\varepsilon\) is a mollifier with small enough \(\varepsilon\) such that \(w=1\) on \(U\). Define the measure \(\widetilde{d\sigma^{\psi}}\) on \(\mathcal{M}^{\widetilde{\psi}}\) as the pullback measure of \(wd\mu\) on \(\mathbb{R}^{n-1}\).

As we will show later, the restriction estimate holds if \(\|\mathcal{R}^{\widetilde{\psi}}\|_{L^p(\mathbb{R}^n,dx)\to L^2(\mathcal{M}^{\widetilde{\psi}},\widetilde{d\sigma^\psi)}}<\infty\).

The corresponding adjoint of the restriction operator, which is the extension operator, has the form
\[\widetilde{\mathcal{R}^{\psi,*}}f(x)=\int_{\mathcal{M}^{\widetilde{\psi}}}f(\xi,\widetilde{\psi}(\xi))e^{-2\pi i(x'\cdot\xi+x^*\cdot\phi(\xi))}\widetilde{d\sigma^{\psi}}.\]
Since \(\widetilde{\mathcal{R}^{\psi,*}}\) is a map between Hilbert spaces, hence \(\|\widetilde{\mathcal{R}^{\psi,*}}\mathcal{R}^{\widetilde{\psi}}\|_{L^p(\mathbb{R}^n,dx)\to L^{p'}(\mathbb{R}^n,dx)}=\|\mathcal{R}^{\widetilde{\psi}}\|_{L^p(\mathbb{R}^n,dx)\to L^2(\mathcal{M}^{\widetilde{\psi}},\widetilde{d\sigma^\psi)}}^2\), which reformulates the problem into showing that \(\|\widetilde{\mathcal{R}^{\psi,*}}\mathcal{R}^{\widetilde{\psi}}\|_{L^p\to L^{p'}}<\infty\), and further equivalent to showing that \(\|\widetilde{\mathcal{R}^{\psi,*}}\mathcal{R}^{\widetilde{\psi}}f\|_{L^{p'}}\lesssim\|f\|_{L^p}\) for all \(f\in S(\mathbb{R}^n)\). 

The composite can be simplified as
\begin{align*}
\widetilde{\mathcal{R}^{\psi,*}}\mathcal{R}^{\widetilde{\psi}}f(x)&=\int_{\mathcal{M}^{\widetilde{\psi}}}\widehat{f}(\xi,\psi(\xi))e^{2\pi i(x'\cdot\xi+x^*\cdot\widetilde{\psi}(\xi))}\widetilde{d\sigma^{\psi}},&&\\
&=\int_{\mathcal{M}^{\widetilde{\psi}}}\int_{\mathbb{R}^n}f(y)e^{-2\pi i(y'\cdot\xi+y^*\cdot\widetilde{\psi}(\xi))}dye^{2\pi i(x'\cdot\xi+x^*\cdot\widetilde{\psi}(\xi))}\widetilde{d\sigma^{\psi}},&&\\
&=\int_{\mathbb{R}^n}f(y)\int_{\mathcal{M}^{\widetilde{\psi}}}e^{2\pi i((x'-y')\cdot\xi+(x^*-y^*)\cdot\widetilde{\psi}(\xi))}\widetilde{d\sigma^{\psi}}dy,&&\\
&=f*\widehat{d\lambda}(x),
\end{align*}
where \(d\lambda(v):=\widetilde{d\sigma^{\psi}}(-v)\). Hence it suffices to show that \(\|f*\widehat{d\lambda}\|_{L^{p'}}\lesssim\|f\|_{L^p}\).

The function \(\widehat{d\lambda}\) is defined on the whole space, so it can be decomposed into a sum of localized functions. When the piecewise estimates are proved for the desired range of \(p\) using interpolation as in the proof of the Hausdorff Young inequality, by the triangle inequality, the restriction estimate holds in the same range of \(p\).

Decompose the constant \(1\) by 
\[1=\varphi(x)+\sum_{j=1}^\infty\varphi_j(x),\]
where \(\varphi_0\in C_c^\infty(\mathbb{R}^n)\) is a bump function supported on \(B(0,1)\) such that \(\varphi_0(x)=1\) on \(B(0,\frac{1}{2})\), and \(\varphi_j(x):=\varphi_0(\frac{x}{2^j})-\varphi_0(\frac{x}{2^{j-1}})\) for all \(j\geqslant1\). So
\[\widehat{d\lambda}=\sum_{j=0}^\infty\varphi_j(x)\widehat{d\lambda}.\]
Further denote \(K_j:=\varphi_j\widehat{d\lambda}\) for all \(j\geqslant0\).

Since \(\widetilde{\psi}\) has nonzero Gaussian curvature, it satisfies \(|\widehat{d\lambda}(x)|=|\widehat{\widetilde{d\sigma^\psi}}(x)|\lesssim(1+|x|)^{-\frac{n-1}{2}}\), which implies
\[\|K_j\|_{L^\infty(\mathbb{R}^n)}\lesssim\|\widehat{d\lambda}\|_{L^\infty(\supp(\varphi_j))}\lesssim2^{-\frac{j(n-1)}{2}}.\]
Young's convolution inequality says that[9, p205-p206]
\begin{theorem}[Young's convolution inequality]
For \(1\leqslant p,q,r\leqslant\infty\) and \(f,g\) are measurable functions with \(f\in L^p(\mathbb{R}^n)\), \(g\in L^q(\mathbb{R}^n)\).

If \(\frac{1}{p}+\frac{1}{q}=\frac{1}{r}+1\), then \(\|f*g\|_{L^r}\leqslant\|f\|_{L^p}\|g\|_{L^q}\).
\end{theorem}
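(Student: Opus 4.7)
The plan is to deduce Young's convolution inequality from the Riesz-Thorin interpolation theorem (Theorem 2.1) applied to the linear operator $T_f(g) := f * g$ for fixed $f \in L^p(\mathbb{R}^n)$. I would first establish the two natural endpoint bounds and then interpolate between them.

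For the first endpoint, $T_f : L^1 \to L^p$ with operator norm $\|f\|_{L^p}$, I would write $(f * g)(x) = \int f(x-y) g(y) \, dy$ and apply Minkowski's integral inequality to pull the $L^p$-norm in $x$ past the integration in $y$, invoking the translation invariance of the Lebesgue measure to identify $\|f(\cdot - y)\|_{L^p} = \|f\|_{L^p}$ for each fixed $y$. For the second endpoint, $T_f : L^{p'} \to L^\infty$ with the same operator norm, I would apply H\"older's inequality (Theorem 2.2) pointwise to $\int |f(x-y) g(y)| \, dy$ with conjugate exponents $p$ and $p'$.

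Riesz-Thorin interpolation with parameter $\theta \in [0,1]$ then yields $T_f : L^{q_\theta} \to L^{r_\theta}$ of operator norm at most $\|f\|_{L^p}^{1-\theta}\|f\|_{L^p}^\theta = \|f\|_{L^p}$, where $1/q_\theta = (1-\theta) + \theta/p'$ and $1/r_\theta = (1-\theta)/p$. A short computation confirms the Young scaling relation $1/p + 1/q_\theta = 1 + 1/r_\theta$, and as $\theta$ sweeps $[0,1]$ the pairs $(q_\theta, r_\theta)$ trace out exactly the admissible exponents for any fixed $p \in [1, \infty)$. The degenerate case $p = \infty$ is isolated: the hypothesis then forces $q = 1$ and $r = \infty$, and the bound $\|f * g\|_{L^\infty} \leqslant \|f\|_{L^\infty}\|g\|_{L^1}$ is immediate from pointwise estimation of $|(f*g)(x)|$.

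The main challenge is not conceptual but bookkeeping: I must verify that $T_f g$ is a well-defined measurable function on the full sum space $L^1 + L^{p'}$ so that the abstract interpolation theorem genuinely applies, and I must check that the two $(q_\theta, r_\theta)$ endpoints together with their interpolants actually exhaust the hypothesized parameter set rather than a proper subset of it. The former is handled by first working with the dense subclass of Schwartz functions and extending by continuity in each $L^{q_\theta}$; the latter is a direct inspection of the linear relation between $1/q_\theta$ and $1/r_\theta$.
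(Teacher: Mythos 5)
The paper does not actually prove Young's convolution inequality; it states the theorem and cites Bogachev [9, p205--p206], using it as a black box inside the Stein--Tomas argument, so there is no internal proof to compare against. Your proposal --- fix $f\in L^p$, establish the endpoints $T_f:L^1\to L^p$ (with norm $\|f\|_{L^p}$) by Minkowski's integral inequality together with translation invariance, and $T_f:L^{p'}\to L^\infty$ (again with norm $\|f\|_{L^p}$) by H\"older, then interpolate with Riesz--Thorin and check that the segment traced by $(1/q_\theta,1/r_\theta)=\bigl(1-\theta+\theta/p',\,(1-\theta)/p\bigr)$ exhausts the admissible exponents --- is correct, and it sits comfortably alongside the paper's repeated use of Riesz--Thorin as its interpolation workhorse. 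You also correctly peel off $p=\infty$: that is precisely the case where the distinctness hypotheses $p_0\neq p_1$, $q_0\neq q_1$ in the paper's statement of Riesz--Thorin would fail, since then $p'=1$ collapses the domain exponents. The well-definedness worry you flag is lighter than you suggest, since $f*g$ is already defined pointwise a.e.\ on each of $L^1$ and $L^{p'}$ separately by the two endpoint computations themselves, and these definitions agree on the intersection; no density-and-extension step is needed. No gap.
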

Hence let \(p=1,q=\infty,r=\infty\), Young's inequality shows that\[\|f*K_j\|_{L^\infty(\mathbb{R}^n)}\leqslant\|K_j\|_{L^\infty(\mathbb{R}^n)}\|f\|_{L^1(\mathbb{R}^n)}\lesssim2^{-\frac{j(n-1)}{2}}\|f\|_{L^1(\mathbb{R}^n)}.\]

Through some technical calculations, 
\(\|\widehat{K_j}\|_{L^\infty(\mathbb{R}^n)}\lesssim2^j\), with constants that depend only on \(\varphi\) and \(n\). [10] Hence by Plancherel's identity and the Fourier transform of convolutions, 
\[\|f*K_j\|_{L^2(\mathbb{R}^n)}=\|\widehat{f*K_j}\|_{L^2(\mathbb{R}^n)}=\|\widehat{f}\widehat{K_j}\|_{L^2(\mathbb{R}^n)}\lesssim2^j\|\widehat{f}\|_{L^2(\mathbb{R}^n)}=2^j\|f\|_{L^2(\mathbb{R}^n)}.\]
By the Riesz-Thorin Interpolation Theorem, let \(T\) be the Fourier transform, \(p_0=1\), \(p_1=2\), \(q_0=\infty\), \(q_1=2\) as previously. The \(L^\infty\) and \(L^2\) bounds of \(f*K_j\) give
\begin{align*}
\|f*K_j\|_{L^{p'}(\mathbb{R}^n)}&\lesssim2^{\frac{2j}{p'}}2^{-\frac{j(n-1)}{2}(1-\frac{2}{p'})}\|f\|_{L^p(\mathbb{R}^n)}&&\\
&=2^{j(\frac{n+1}{p'}-\frac{n-1}{2})}\|f\|_{L^p(\mathbb{R}^n)}
\end{align*}
for all \(j\geqslant0\). To ensure \(\sum_{j=0}^\infty\|f*K_j\|_{L^{p'}(\mathbb{R}^n)}\) converges, the exponent \(j(\frac{n+1}{p'}-\frac{n-1}{2})\) has to be negative to make the power series \(\sum_{j=0}^\infty2^{j(\frac{n+1}{p'}-\frac{n-1}{2})}\|f\|_{L^p(\mathbb{R}^n)}\) converge, which requires that \(p<\frac{2(n+1)}{n+3}\).

By the triangle inequality, 
\begin{align*}
\|f*\widehat{d\lambda}\|_{L^{p'}(\mathbb{R}^n)}&=\|\sum_{j=0}^\infty f*K_j\|_{L^{p'}(\mathbb{R}^n)},&&\\
&\leqslant\sum_{j=0}^\infty\| f*K_j\|_{L^{p'}(\mathbb{R}^n)},&&\\
&\lesssim\sum_{j=0}^\infty2^{j(\frac{n+1}{p'}-\frac{n-1}{2})}\|f\|_{L^p(\mathbb{R}^n)}\lesssim\|f\|_{L^p(\mathbb{R}^n),}
\end{align*}
which proves \(\|\mathcal{R}^{\widetilde{\psi}}\|_{L^p(\mathbb{R}^n,dx)\to L^2(\mathcal{M}^{\widetilde{\psi}},\widetilde{d\sigma^\psi)}}<\infty\) for \(p<\frac{2(n+1)}{n+3}\) and \(q=2\). Since by definition
\begin{align*}
&\|\mathcal{R}^{\widetilde{\psi}}\|_{L^p(\mathbb{R}^n,dx)\to L^2(\mathcal{M}^{\widetilde{\psi}},\widetilde{d\sigma^\psi)}}&&\\
&=\sup_{f\in \mathcal{S}(\mathbb{R}^n):\,\|f\|_{L^p}=1,\|g\|_{L^2(\mathcal{M}^{\widetilde{\psi}},\widetilde{d\sigma^\psi})=1}}|\int_{\mathcal{M}^{\widetilde{\psi}}}\mathcal{R}^\psi f(\xi,\psi(\xi))\overline{g(\xi,\psi(\xi))}\widetilde{d\sigma^\psi}\xi|,
\end{align*}
taking \(h\) with \(\|h\|_{L^2(\mathcal{M}^\psi,d\sigma^\psi)=1}\), then
\begin{align*}
&\|\mathcal{R}^\psi\|_{L^p(\mathbb{R}^n,dx)\to L^2(\mathcal{M}^\psi,d\sigma^\psi)}&&\\
&=\sup_{f\in \mathcal{S}(\mathbb{R}^n):\,\|f\|_{L^p}=1,\|h\|_{L^2(\mathcal{M}^\psi,d\sigma^\psi)=1}}|\int_{\mathcal{M}^\psi}\mathcal{R}^\psi f(\xi,\psi(\xi))\overline{h(\xi,\psi(\xi))}d\sigma^\psi\xi|,&&\\
&=\sup_{f\in \mathcal{S}(\mathbb{R}^n):\,\|f\|_{L^p}=1,\|h\|_{L^2(\mathcal{M}^\psi,d\sigma^\psi)=1}}|\int_{\mathcal{M}^{\widetilde{\psi}}}\mathcal{R}^\psi f(\xi,\psi(\xi))\overline{h(\xi,\psi(\xi))}\widetilde{d\sigma^\psi}\xi|,&&\\
&\leqslant\sup_{f\in \mathcal{S}(\mathbb{R}^n):\,\|f\|_{L^p}=1,\|g\|_{L^2(\mathcal{M}^{\widetilde{\psi}},\widetilde{d\sigma^\psi})=1}}|\int_{\mathcal{M}^{\widetilde{\psi}}}\mathcal{R}^\psi f(\xi,\psi(\xi))\overline{g(\xi,\psi(\xi))}\widetilde{d\sigma^\psi}\xi|,&&\\
&=\|\mathcal{R}^{\widetilde{\psi}}\|_{L^p(\mathbb{R}^n,dx)\to L^2(\mathcal{M}^{\widetilde{\psi}},\widetilde{d\sigma^\psi)}}<\infty.
\end{align*}
Which proves the restriction conjecture for \(q=2\).

The same result from the Stein Tomas argument is shown for the paraboloid \(\mathbb{P}^{n-1}\) for more general quadratic surfaces By Strichartz.[11] Also, the proof of restriction on the sphere is almost the same as the proof on the paraboloid.[3, p18]

\section{Bilinear Restriction}
Aside of attempting to prove the linear restriction conjecture via the two types of argument which are discussed in the previous section, the study of the linear restriction problem has motivated a class of more generalized problems called multilinear restriction problems. While still trying to find the full restriction range, the term multilinear refers to the problem involving multiple functions, instead of a single function in the linear case.

Let \(1\leqslant k\leqslant n\), for each \(1\leqslant j\leqslant k\), let \(\mathcal{M}^{\psi_j}:=\{(\xi,\psi_j(\xi)):\xi\in U_j\}\) be a sequence of smooth manifolds, the k-linear restriction problem seeks to find the range of \(p,q\) such that for all \(f_j\in L^p(\mathcal{M}^{\psi_j})\), the estimate
\[\||\prod_{j=1}^k\mathcal{E}^{\psi_j}f_j|^\frac{1}{k}\|_{L^p(\mathbb{R}^n,dx)}\lesssim(\prod_{j=1}^k\|f_j\|_{L^q(\mathcal{M}^{\psi_j},d\sigma^\psi)})^\frac{1}{k}\]
holds. When \(k=1\), the problem is identical to the linear case. The \(k=2\) case is called the bilinear restriction problem. 

The significance of bilinear restriction and multilinear restriction lies not only in their connections to the multilinear Kakeya estimates, they can also be exploited to prove linear restriction theorems, which will be discussed later for the bilinear restriction estimate.

\subsection{\(L^2\) Bilinear Restriction Theory}
Similar to the result from the Stein-Tomas restriction theorem in the previous section, the bilinear setting has an analogous problem as the linear case, which seeks to find the range of \(p\) such that on the paraboloid, the bilinear extension estimate holds for \(q=2\).

Having fixed the manifold for this section as the paraboloid, denote
\[\mathcal{E}_{\Omega}f:=\int_\Omega f(\xi)e^{2\pi i(x'\cdot\xi+x^*|\xi|^2)}d\xi,\,x=(x',x^*)\in\,\mathbb{R}^{n-1}\times\mathbb{R}.\]
Note that this formulation is the alternative form of the extension operator discussed in the previous section, and this convention will be used throughout the rest of the dissertation.
\begin{RT}[Bilinear Restriction of the paraboloid for \(q=2\)]
For \(n\geqslant2\), let \(\Omega_1, \Omega_2\) be two cubes in \([-1,1]^{n-1}\) with \(d(\Omega_1,\Omega_2):=\inf_{x_1\in\Omega_1,x_2\in\Omega_2}\|x_1-x_2\|>0\), then for all \(f:\Omega_1\cup\Omega_2\to\mathbb{C}\),
\[\||\mathcal{E}_{\Omega_1}f\mathcal{E}_{\Omega_2}f|^\frac{1}{2}\|_{L^p(\mathbb{R}^n)}\lesssim(\|f\|_{L^2(\Omega_1)}\|f\|_{L^2(\Omega_2)})^\frac{1}{2}\]
holds for all \(p>\frac{2(n+2)}{n}\).
[3, p35]
\end{RT}

The 2-dimensional case also holds for the endpoint \(p=4\), which will be proved in this section. By multilinear interpolation and H\"older's inequality, it suffices to prove the endpoint result.[12]

Following [3, p37-p44], there are two major components to prove Restriction Theorem 4, the first being the reverse square function estimate, which requires diagonal behavior for parabolic caps in its proof.

Let \(I\) be an interval, \(\sigma\) be a constant and denote the parabolic cap \(\mathcal{N}_I(\sigma)\) as \(\mathcal{N}_I(\sigma):=\{(\xi,\xi^2+t):\xi\in I,|t|\leqslant\sigma\}\). For two sets \(\mathcal{N}_I(\sigma)\), \(\mathcal{N}_J(\sigma)\), define their difference as \(\mathcal{N}_I(\sigma)-\mathcal{N}_J(\sigma):=\{u-v:u\in\mathcal{N}_I(\sigma),v\in\mathcal{N}_J(\sigma)\}\). The diagonal behavior of parabolic caps says that the difference between two pairs of caps are disjoint if they are far apart.

\begin{theorem}[Diagonal behavior for parabolic caps]
(i) For each interval \(I_1,I_2,I_2'\subset[-1,1]\) with length \(\delta\ll1\), \(d(I_2, I'_2)\geqslant N\delta\) with \(N\) a large number, then
\[(\mathcal{N}_{\mathcal{I}_1}(\delta^2)-\mathcal{N}_{\mathcal{I}_1}(\delta^2))\cap(\mathcal{N}_{\mathcal{I}_2}(\delta^2)-\mathcal{N}_{\mathcal{I}_2'}(\delta^2))=\emptyset.\]
(ii) For each interval \(I_1,I_1',I_2,I_2'\subset[-1,1]\) with length \(\delta\ll1\), \(d(I_1, I'_1)\), \(d(I_2, I'_2)\geqslant N\delta\) with \(N\) is a large number, and \(\max(d(I_1, I'_1),d(I_2, I'_2))\geqslant N\delta\) then
\[(\mathcal{N}_{\mathcal{I}_1}(\delta^2)-\mathcal{N}_{\mathcal{I}_1'}(\delta^2))\cap(\mathcal{N}_{\mathcal{I}_2}(\delta^2)-\mathcal{N}_{\mathcal{I}_2'}(\delta^2))=\emptyset.\]
 with inequality constant independent of \(\delta\). [3, p38-p39]
\end{theorem}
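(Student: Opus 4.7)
The plan is to exploit the algebraic identity $\xi^2 - \xi'^2 = (\xi - \xi')(\xi + \xi')$ that characterises the parabola. A point in $\mathcal{N}_I(\delta^2) - \mathcal{N}_{I'}(\delta^2)$ has coordinates
$$(u, v) = \bigl(\xi - \xi',\; (\xi - \xi')(\xi + \xi') + (s - s')\bigr),$$
with $\xi \in I$, $\xi' \in I'$, $|s|, |s'| \leq \delta^2$. Letting $c, c'$ denote the centres of $I, I'$, the first coordinate is confined to $[c - c' - \delta,\, c - c' + \delta]$, and once $u$ is fixed the second coordinate differs from $u(c + c')$ by at most $O(|u|\delta + \delta^2)$. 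This reduces the whole problem to comparing two linear expressions in the interval centres.

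For part (i), I would argue using only the first coordinate. The first coordinate of $\mathcal{N}_{I_1}(\delta^2) - \mathcal{N}_{I_1}(\delta^2)$ is trapped in $[-\delta, \delta]$, while the first coordinate of $\mathcal{N}_{I_2}(\delta^2) - \mathcal{N}_{I_2'}(\delta^2)$ lies within $\delta$ of $c_2 - c_2'$, whose modulus is at least $d(I_2, I_2') \geq N\delta$. For any $N$ larger than, say, $2$, the two first-coordinate projections are already disjoint and hence the sets themselves cannot intersect. No information from the second coordinate is needed here.

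For part (ii), the first-coordinate argument no longer suffices by itself, since both $|c_1 - c_1'|$ and $|c_2 - c_2'|$ can be large. I would split into two subcases. If $|(c_1 - c_1') - (c_2 - c_2')| > 2\delta$ then the first-coordinate projections are still disjoint, and we are done as before. In the complementary subcase the projections overlap at some $u$ whose magnitude is at least $N\delta - \delta$ (since $|c_j - c_j'| \geq N\delta$). Plugging this common $u$ into the second-coordinate expression, the gap between the two possible values of $v$ is bounded below by
$$|u|\cdot\bigl|(c_1 + c_1') - (c_2 + c_2')\bigr| - C\bigl(|u|\delta + \delta^2\bigr).$$
Under the constraint $c_1 - c_1' \approx c_2 - c_2'$, the hypothesis that the two pairs of intervals are genuinely distinct forces the sum coordinates to differ by at least a constant times the relevant pairwise distance, so $|(c_1 + c_1') - (c_2 + c_2')| \gtrsim N\delta$. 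Combined with $|u| \gtrsim N\delta$, the main term is at least $c N^2 \delta^2$ and dominates the $O(N\delta^2)$ error once $N$ is taken sufficiently large, producing the desired contradiction.

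The main obstacle, as one would expect, is part (ii), and specifically the step converting the combinatorial hypothesis on the four intervals into the quantitative lower bound on $|(c_1 + c_1') - (c_2 + c_2')|$. This is really a statement about the linear change of variables $(c, c') \mapsto (c - c',\, c + c')$: when the "difference" coordinates of the two pairs nearly agree while the pairs themselves are well separated, the "sum" coordinates must be separated at a comparable scale. Once this elementary geometric fact is made precise, the remainder of the argument is a triangle-inequality bookkeeping exercise, keeping track of how $N$ enters each estimate so that the final implicit constant is indeed independent of $\delta$.
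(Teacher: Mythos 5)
Your proposal rests on exactly the same algebraic observation as the paper's own proof, namely the factorisation \(\xi^2-\xi'^2=(\xi-\xi')(\xi+\xi')\), and part~(i) in particular is the paper's argument with a small cosmetic improvement: you project onto the first coordinate directly, whereas the paper first encases each cap in a ball of radius \(2\delta\). Both give the same \(O(\delta)\) versus \(\gtrsim N\delta\) separation and both are correct. For~(ii) your two-case split (first coordinates disjoint, or common \(u\) with \(|u|\gtrsim N\delta\)) reproduces, in forward form, the paper's argument by contradiction: the paper assumes a common point, reads off \(\xi_1-\xi_1'=\xi_2-\xi_2'=v\) with \(|v|\geqslant N\delta\), divides the \(O(\delta^2)\) bound on \(w_1-w_2\) by \(v\) to get \(|(\xi_1+\xi_1')-(\xi_2+\xi_2')|\lesssim\delta\), and then combines with \(\xi_1-\xi_2=\xi_1'-\xi_2'\) to force \(|\xi_1-\xi_2|,|\xi_1'-\xi_2'|\lesssim\delta\). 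That is the same bookkeeping you describe, just run in the other direction.

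There is, however, a real gap in your~(ii), and you flag it yourself. The sentence ``the hypothesis that the two pairs of intervals are genuinely distinct forces the sum coordinates to differ by at least a constant times the relevant pairwise distance'' is not a consequence of the hypotheses as listed and is false as stated. Take \(I_1=[0,\delta]\), \(I_1'=[N\delta,(N+1)\delta]\), \(I_2=[\delta,2\delta]\), \(I_2'=[(N+1)\delta,(N+2)\delta]\): both separation conditions \(d(I_1,I_1'),d(I_2,I_2')\geqslant N\delta\) hold, the pairs are distinct, yet \((c_1+c_1')-(c_2+c_2')=-2\delta\) is tiny, and the two difference sets essentially coincide rather than being disjoint. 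The missing ingredient is a transversality hypothesis of the form \(\max(d(I_1,I_2),d(I_1',I_2'))\geqslant N\delta\); this is evidently what the third clause in the theorem statement is intended to say, since the version actually printed, \(\max(d(I_1,I_1'),d(I_2,I_2'))\geqslant N\delta\), is redundant with the line before it. With that correction your argument closes cleanly: in the second case you have \(|(c_1-c_1')-(c_2-c_2')|\leqslant2\delta\), i.e.\ \(|(c_1-c_2)-(c_1'-c_2')|\leqslant2\delta\), so from \((c_1+c_1')-(c_2+c_2')=(c_1-c_2)+(c_1'-c_2')\) and the corrected hypothesis you get \(|(c_1+c_1')-(c_2+c_2')|\geqslant2N\delta-2\delta\), and the \(|u|\cdot N\delta\gtrsim N^2\delta^2\) main term then dominates the \(O(N\delta^2)\) error as you claim. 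So the idea is right, but you should have identified precisely which hypothesis supplies the lower bound on \(|(c_1+c_1')-(c_2+c_2')|\) rather than appealing to ``genuinely distinct.''
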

\begin{proof}
Note that each \(\mathcal{N}_I(\delta^2)\) is contained in a ball of radius \(2\delta\). To show this, without loss of generality, let \(I=[a,a+\delta]\), \(a\in[-\delta,1-\delta]\). Then the four ``vertices" of the cap are \((a,a^2-\delta^2)\), \((a,a^2+\delta^2)\), \(((a+\delta)^2-\delta^2)\), \(((a+\delta)^2+\delta^2)\). 

To construct a parallelogram that contains the cap, connect \((a,a^2+\delta^2)\) and \(((a+\delta)^2+\delta^2)\) linearly gives a line \(l_1:y=(2a+\delta)\xi+a\delta+\delta^2-a^2\). Similarly, the tangent line of \(y=\xi^2-\delta^2\) that is parallel to \(l_1\) has the form \(l_2:y=(2a+\delta)\xi-a^2-a\delta-\frac{5}{4}\delta^2\). 

Then \(\mathcal{N}_I(\delta^2)\) is contained in the parallelogram \(\{(\xi,y):\xi\in[a,a+\delta],l_2\leqslant y\leqslant l_1\}\), with vertices \((a,a^2+\delta^2)\), \(((a+\delta)^2+\delta^2)\), \((a,a^2-\frac{5}{4}\delta^2)\), \((a+\delta,a^2+2a\delta-\frac{1}{4}\delta^2)\). 

Since the longest line segment in the parallelogram is the line connecting \((a,a^2-\frac{5}{4}\delta^2)\) and \(((a+\delta)^2+\delta^2)\) , which, by the Pythagorean theorem, has length \(\sqrt{4a^2\delta^2+13a\delta^3+\frac{169}{16}\delta^4+\delta^2}\). Since \(\delta\ll1\), taking \(a=1\) shows that the parallelogram is contained in a ball of radius \(2\delta\).

Since \(\mathcal{N}_I(\delta^2)\) is contained in \(B(c,2\delta)\), where \(c=(c_1,c_1^2)\) and \(c_1\) is the midpoint of \(I_1\), the set \(\mathcal{N}_{I_1}(\delta^2)-\mathcal{N}_{I_1}(\delta^2)\) is contained in the set \(B(c,2\delta)-B(c,2\delta)=\{c+u:|u-c|\leqslant2\delta\}-\{c+v:|v-c|\leqslant2\delta\}=\{u-v:|u-v|\leqslant4\delta\}=B(0,4\delta)\). Similarly, \(\mathcal{N}_{I_2}(\delta^2)-\mathcal{N}_{I_2'}(\delta^2)=B(c',4\delta)\), where \(c'=((c_2,c_2^2)-(c_2',(c_2'))\) and \(c_2\) and \(c_2'\) are the midpoints of \(I_2\) and \(I_2'\) respectively. Since \(|c'|\geqslant N\delta>8\delta\), conclusion (i) holds.

For (ii), assume \(\mathcal{N}_{\mathcal{I}_1}(\delta^2)-\mathcal{N}_{\mathcal{I}_1'}(\delta^2)\cap\mathcal{N}_{\mathcal{I}_2}(\delta^2)-\mathcal{N}_{\mathcal{I}_2'}(\delta^2)\neq\emptyset\) and \(d(I_1, I'_1)\), \(d(I_2, I'_2)\geqslant N\delta\), then by assumption, there exists \(\xi_1\in I_1\), \(\xi_1'\in I_1'\), \(\xi_2\in I_2\), \(\xi_2\in I_2'\) such that 
\[v:=\xi_1-\xi_1'=\xi_2-\xi_2', |v|\geqslant N\delta,\]\[ w_1:=\xi_1^2-(\xi_1')^2, w_2:=\xi_2^2-(\xi_2')^2,|w_1-w_2|\leqslant4\delta^2\]
by the previous two constructions,
\[\delta\geqslant|\frac{w_1-w_2}{v}|=|\frac{\xi_1^2-(\xi_1')^2}{\xi_1-\xi_1'}-\frac{\xi_2^2-(\xi_2')^2}{\xi_2-\xi_2'}|=|\xi_1-\xi_2+\xi_1'-\xi_2'|\]
Since \(\xi_1-\xi_2=\xi_1'-\xi_2'\), the above's inequality implies that \(|\xi_1-\xi_2|\), \(|\xi_1'-\xi_2'|\leqslant\delta\), and hence \( d(I_1, I'_1),d(I_2, I'_2)\leqslant N\delta\), and (ii) holds.
\end{proof}
With the diagonal behavior of parabolic caps, the proof of the reverse square function inequality is greatly reduced into two simple cases.

For two intervals \(J_1,J_2\subset[-1,1]\), let \(\mathcal{P}_j\) be a partition of \(J_j\) into intervals \(I_j\) with length \(\delta\) for each \(j=\{1,2\}\). For \(f:\mathbb{R}^2\to \mathbb{C}\) a function and an interval \(I\subset\mathbb{R}\), denote \(f_I\) be the Fourier projection of F onto the parabolic cap \(\mathcal{N}_I(\delta^2)\), where
\[\widehat{f_I}=\widehat{f}|_{\mathcal{N}_I(\delta^2)}.\]

\begin{theorem}[Reverse square function inequality]
\[\||f_{J_1}f_{J_2}|^\frac{1}{2}\|_{L^4(\mathbb{R}^2)}\lesssim\|(\sum_{I_1\in\mathcal{P}_1}|f_{I_1}|^2)^\frac{1}{4}(\sum_{I_2\in\mathcal{P}_2}|f_{I_2}|^2)^\frac{1}{4})\|_{L^4(\mathbb{R}^2)}.\]
[3, p39-p40]
\end{theorem}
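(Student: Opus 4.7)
The plan is to raise both sides to the fourth power, apply Plancherel to turn the left side into an $L^{2}$-norm of a Fourier convolution, and then exploit the almost-orthogonality of the supports $\mathcal{N}_{I_1}(\delta^2)+\mathcal{N}_{I_2}(\delta^2)$ guaranteed by Theorem 3. Concretely, $\||f_{J_1}f_{J_2}|^{1/2}\|_{L^4(\mathbb{R}^2)}^{4}=\|f_{J_1}f_{J_2}\|_{L^2(\mathbb{R}^2)}^{2}$, and since $f_{J_j}=\sum_{I_j\in\mathcal{P}_j}f_{I_j}$ (because the caps $\mathcal{N}_{I_j}(\delta^2)$ partition $\mathcal{N}_{J_j}(\delta^2)$), Plancherel and the convolution theorem give
\[
\|f_{J_1}f_{J_2}\|_{L^2(\mathbb{R}^2)}^{2}=\Big\|\sum_{I_1\in\mathcal{P}_1,\,I_2\in\mathcal{P}_2}\widehat{f_{I_1}}*\widehat{f_{I_2}}\Big\|_{L^2(\mathbb{R}^2)}^{2},
\]
with each summand Fourier-supported in $\mathcal{N}_{I_1}(\delta^2)+\mathcal{N}_{I_2}(\delta^2)$.

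The heart of the argument is the observation, via the elementary equivalence
\[
(\mathcal{N}_{I_1}(\delta^2)+\mathcal{N}_{I_2}(\delta^2))\cap(\mathcal{N}_{I_1'}(\delta^2)+\mathcal{N}_{I_2'}(\delta^2))\neq\emptyset \iff (\mathcal{N}_{I_1}(\delta^2)-\mathcal{N}_{I_1'}(\delta^2))\cap(\mathcal{N}_{I_2'}(\delta^2)-\mathcal{N}_{I_2}(\delta^2))\neq\emptyset,
\]
that parts (i) and (ii) of Theorem 3 (after swapping the roles of $I_2,I_2'$ in the second factor and using $\mathcal{N}_{I_2'}-\mathcal{N}_{I_2}=-(\mathcal{N}_{I_2}-\mathcal{N}_{I_2'})$) force both $d(I_1,I_1')<N\delta$ and $d(I_2,I_2')<N\delta$ whenever the sum-sets intersect. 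Since each $\mathcal{P}_j$ consists of intervals of length $\delta$, at most $O(N^{2})$ pairs $(I_1',I_2')$ can share support with any given $(I_1,I_2)$; treating $N$ as an absolute constant, every $\xi\in\mathbb{R}^2$ then lies in only $O(1)$ of the sum-sets. Pointwise Cauchy--Schwarz now yields the almost-orthogonality
\[
\Big\|\sum_{I_1,I_2}\widehat{f_{I_1}}*\widehat{f_{I_2}}\Big\|_{L^2(\mathbb{R}^2)}^{2}\lesssim\sum_{I_1,I_2}\|\widehat{f_{I_1}}*\widehat{f_{I_2}}\|_{L^2(\mathbb{R}^2)}^{2}=\sum_{I_1,I_2}\|f_{I_1}f_{I_2}\|_{L^2(\mathbb{R}^2)}^{2},
\]
the last equality being Plancherel applied termwise. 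Fubini--Tonelli then identifies this sum with $\int_{\mathbb{R}^2}\bigl(\sum_{I_1}|f_{I_1}|^{2}\bigr)\bigl(\sum_{I_2}|f_{I_2}|^{2}\bigr)$, which is precisely the fourth power of the right-hand side of the claimed inequality; taking fourth roots finishes the proof.

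The main obstacle I anticipate is the bounded-overlap claim: transferring Theorem 3, which is stated for \emph{differences} of caps, into the needed statement about \emph{sums}, and then verifying that each point of Fourier space lies in only a constant number of sum-sets $\mathcal{N}_{I_1}(\delta^2)+\mathcal{N}_{I_2}(\delta^2)$, so that the almost-orthogonality constant is genuinely independent of $|\mathcal{P}_1|$ and $|\mathcal{P}_2|$. Once this quantitative control is in hand, the remainder is a routine chain of Plancherel, Fubini--Tonelli, and Cauchy--Schwarz.
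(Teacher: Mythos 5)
Your argument is correct and takes a genuinely different route from the paper's. You reduce everything to a single almost-orthogonality estimate for the sum-sets $\mathcal{N}_{I_1}(\delta^2)+\mathcal{N}_{I_2}(\delta^2)$: bounded overlap of these supports gives $\|\sum\widehat{f_{I_1}}*\widehat{f_{I_2}}\|_{L^2}^2 \lesssim \sum\|\widehat{f_{I_1}}*\widehat{f_{I_2}}\|_{L^2}^2$ directly, and the rest is Plancherel and Fubini. The paper instead introduces auxiliary sub-partitions $\mathcal{P}_{j,k}$ whose intervals are mutually $N\delta$-separated, expands $\|f_{J_1}f_{J_2}\|_{L^2}^2$ as a quadruple sum over $(I_1,I_1',I_2,I_2')$, invokes Theorem 3 to kill most cross-terms, and separately handles a near-anti-diagonal case (the ``$\nei(I_1,I_1')$'' bookkeeping) and a diagonal case before recombining the sub-partitions. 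Your approach is cleaner, sidestepping both the sub-partitioning and the case split.

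One point worth tightening: you claim that parts (i) and (ii) of Theorem 3 ``force \emph{both} $d(I_1,I_1')<N\delta$ and $d(I_2,I_2')<N\delta$'' when the sum-sets intersect. Taken literally, the contrapositive of (ii) only yields the \emph{disjunction} $d(I_1,I_1')<N\delta$ \emph{or} $d(I_2,I_2')<N\delta$, and (i) as stated requires $I_1=I_1'$ exactly rather than merely $d(I_1,I_1')<N\delta$. To close the gap, strengthen (i) to: if $d(I_1,I_1')\leq N\delta$ then $\mathcal{N}_{I_1}(\delta^2)-\mathcal{N}_{I_1'}(\delta^2)\subset B(0,(4+2N)\delta)$, so intersection with $\mathcal{N}_{I_2'}(\delta^2)-\mathcal{N}_{I_2}(\delta^2)$ forces $d(I_2,I_2')\lesssim N\delta$. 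Alternatively, verify the bounded-overlap claim directly: for a fixed $(a,b)$, writing $\xi_1=a/2+s$ one needs $|s^2-c|\leq\delta^2$ with $c=(b-a^2/2)/2$, which confines $s$ to at most two intervals of length $O(\delta)$, and hence confines $(I_1,I_2)$ to $O(1)$ pairs. Either route makes the overlap constant honestly independent of $\delta$ and of the cardinalities $|\mathcal{P}_1|,|\mathcal{P}_2|$, which is exactly what your Cauchy--Schwarz step requires.
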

\begin{proof}
Using the partitions, \(f_{J_j}=\sum_{I_j\in\mathcal{P}_j}f_{I_j}\), which hold as \(\mathcal{N}_{J_j}(\delta^2)=\bigcup\mathcal{N}_{I_j}(\delta^2)\), and \(\mathcal{N}_{I_j}(\delta^2)\) are disjoint. Further partition \(\mathcal{P}_j=\bigcup_{k=1}^{N+1}\mathcal{P}_{j,k}\), where the intervals of each \(\mathcal{P}_{j,k}\) are mutually separated by at least \(N\delta\).

By Plancherel's identity for inner products,
\begin{align*}
\||\sum_{I_1\in\mathcal{P}_{1,k}}f_{I_1}\sum_{I_2\in\mathcal{P}_{2,k'}}f_{I_2}|^\frac{1}{2}\|_{L^4(\mathbb{R}^2)}^4&=\int_{\mathbb{R}^2}|\sum_{I_1\in\mathcal{P}_{1,k}}f_{I_1}\sum_{I_2\in\mathcal{P}_{2,k'}}f_{I_2}|^2dx,&&\\
&=\sum_{I_1,I'_1\in\mathcal{P}_{1,k}}\sum_{I_2,I'_2\in\mathcal{P}_{2,k'}}\int_{\mathbb{R}^2}f_{I_1}\overline{f_{I_1'}}f_{I_2}\overline{f_{I_2'}}dx,&&\\
&=\sum_{I_1,I'_1\in\mathcal{P}_{1,k}}\sum_{I_2,I'_2\in\mathcal{P}_{2,k'}}\int_{\mathbb{R}^2}\widehat{f_{I_1}\overline{f_{I_1'}}}\,\overline{\widehat{\overline{f_{I_2}}f_{I_2'}}}dx.
\end{align*}
Note that \(\widehat{f_{I_1}\overline{f_{I_1'}}}=\widehat{f_{I_1}}*\widehat{\overline{f_{I_1'}}}\), \(\supp(\widehat{f_{I_1}})=\mathcal{N}_{I_1}(\delta^2)\) and \(\supp(\widehat{\overline{f_{I_1'}}})=-\mathcal{N}_{I_1'}(\delta^2)\), then \(\supp(\widehat{f_{I_1}\overline{f_{I_1'}}})\subseteq\mathcal{N}_{I_1}(\delta^2)-\mathcal{N}_{I_1'}(\delta^2)\). Similarly, \(\supp(\overline{\widehat{\overline{f_{I_2}}f_{I_2'}}})=\mathcal{N}_{I_2}(\delta^2)-\mathcal{N}_{I_2'}(\delta^2)\).

Since by assumption \(d(I_j,I_j')>N\delta\) for \(i=1,2\), by the previous theorem, the term \(\int_{\mathbb{R}^2}\widehat{f_{I_1}\overline{f_{I_1'}}}\,\overline{\widehat{\overline{f_{I_2}}f_{I_2'}}}dx\) vanishes unless either when \(d(I_1,I_2)\leqslant N\delta\) and \(d(I_1',I_2')\leqslant N\delta\), or \(I_j=I_j'\) for each \(j\).

For the first case, reformulate \(d(I_1,I_2')\leqslant N\delta\) and \(d(I_1',I_2)\leqslant N\delta\) as the collection of \((I_2,I_2')\) in a neighbourhood of \((I_1,I_1')\), denote \(\nei(I_1,I_1')\). Since \(I_2\in\mathcal{P}_{2,k'}\) are separated by at least \(N\delta\), each \(\nei(I_1,I_1')\) contains a fixed number of the pair \((I_2,I_2')\), independent of \(\delta\). Hence
\begin{align*}
&\sum_{I_1,I'_1\in\mathcal{P}_{1,k}}\sum_{I_2,I'_2\in\mathcal{P}_{2,k'}}\int_{\mathbb{R}^2}f_{I_1}\overline{f_{I_1'}}f_{I_2}\overline{f_{I_2'}}dx&&\\
&=\int_{\mathbb{R}^2}\sum_{I_1,I_1'\in\mathcal{P}_{1,k}}\sum_{I_2,I_2'\in \nei(I_1,I_1')}|f_{I_1}f_{I_1'}f_{I_2}f_{I_2'}|dx,&&\\
&\leqslant\int_{\mathbb{R}^2}(\sum_{I_1,I_1'\in\mathcal{P}_{1,k}}|\sum_{I_2,I_2'\in \nei(I_1,I_1')}f_{I_1}f_{I_1'}|^2)^\frac{1}{2}(\sum_{I_1,I_1'\in\mathcal{P}_{1,k}}|\sum_{I_2,I_2'\in \nei(I_1,I_1')}f_{I_2}f_{I_2'}|^2)^\frac{1}{2}dx,&&\\
&\leqslant\int_{\mathbb{R}^2}(\sum_{I_1,I_1'\in\mathcal{P}_{1,k}}|f_{I_1}f_{I_1'}|^2)^\frac{1}{2}(\sum_{I_1,I_1'\in\mathcal{P}_{1,k}}(\sum_{I_2,I_2'\in \nei(I_1,I_1')}|1f_{I_2}f_{I_2'}|)^2)^\frac{1}{2}dx,&&\\
&\lesssim\int_{\mathbb{R}^2}(\sum_{I_1,I_1'\in\mathcal{P}_{1,k}}|f_{I_1}f_{I_1'}|^2)^\frac{1}{2}(\sum_{I_1,I_1'\in\mathcal{P}_{1,k}}\sum_{I_2,I_2'\in \nei(I_1,I_1')}|f_{I_2}f_{I_2'}|^2)^\frac{1}{2}dx,&&\\
&\lesssim\int_{\mathbb{R}^2}(\sum_{I_1,I_1'\in\mathcal{P}_{1,k}}|f_{I_1}f_{I_1'}|^2)^\frac{1}{2}(\sum_{I_2,I_2'\in\mathcal{P}_{2,k}}|f_{I_2}f_{I_2'}|^2)^\frac{1}{2}dx,&&\\
&=\int_{\mathbb{R}^2}\sum_{I_1,I_1'\in\mathcal{P}_{1,k}}|f_{I_1}|^2\sum_{I_2,I_2'\in\mathcal{P}_{2,k}}|f_{I_2}|^2dx,&&\\
&\leqslant\|(\sum_{I_1\in\mathcal{P}_{1,k}}|f_{I_1}|^2)^\frac{1}{4}(\sum_{I_2\in\mathcal{P}_{2,k'}}|f_{I_2}|^2)^\frac{1}{4}\|_{L^4(\mathbb{R}^2)}^4.
\end{align*}
Here the second and third inequality follows by Cauchy-Schwartz inequality, and the fourth inequality follows by Cauchy-Schwartz inequality, with \(\sum_{I_2,I_2'\in \nei(I_1,I_1')}1\) equal to a constant as there are only finitely many pairs \(I_2,I_2'\) in \(\nei(I_1,I_1')\).
For the case \(I_j=I_j'\) for some \(j\),
\[\int_{\mathbb{R}^2}\sum_{I_1\in\mathcal{P_{1,k}}}\sum_{I_2\in\mathcal{P_{2,k'}}}f_{I_1}\overline{f_{I_1}}f_{I_2}\overline{f_{I_2}}dx=\|(\sum_{I_1\in\mathcal{P}_{1,k}}|f_{I_1}|^2)^\frac{1}{4}(\sum_{I_2\in\mathcal{P}_{2,k'}}|f_{I_2}|^2)^\frac{1}{4}\|_{L^4(\mathbb{R}^2)}^4.\]
The two cases together imply \[\||\sum_{I_1\in\mathcal{P}_{1,k}}f_{I_1}\sum_{I_2\in\mathcal{P}_{2,k'}}f_{I_2}|^\frac{1}{2}\|_{L^4(\mathbb{R}^2)}^4\leqslant\|(\sum_{I_1\in\mathcal{P}_{1,k}}|f_{I_1}|^2)^\frac{1}{4}(\sum_{I_2\in\mathcal{P}_{2,k'}}|f_{I_2}|^2)^\frac{1}{4}\|_{L^4(\mathbb{R}^2)}^4.\]
Finally since
\begin{align*}
\sum_{k,k'=1}^{N+1}\|(\sum_{I_1\in\mathcal{P}_{1,k}}|f_{I_1}|^2)^\frac{1}{4}(\sum_{I_2\in\mathcal{P}_{2,k'}}|f_{I_2}|^2)^\frac{1}{4}\|_{L^4(\mathbb{R}^2)}^4&=\sum_{k,k'=1}^{N+1}\int_{\mathbb{R}^2}(\sum_{I_1\in\mathcal{P}_{1,k}}|f_{I_1}|^2)(\sum_{I_2\in\mathcal{P}_{2,k'}}|f_{I_2}|^2)dx,&&\\
&=\int_{\mathbb{R}^2}\sum_{k,k'=1}^{N+1}(\sum_{I_1\in\mathcal{P}_{1,k}}|f_{I_1}|^2)(\sum_{I_2\in\mathcal{P}_{2,k'}}|f_{I_2}|^2)dx,&&\\
&=\int_{\mathbb{R}^2}(\sum_{I_1\in\mathcal{P}_1}|f_{I_1}|^2)(\sum_{I_2\in\mathcal{P}_2}|f_{I_2}|^2)dx,&&\\
&=\|(\sum_{I_1\in\mathcal{P}_1}|f_{I_1}|^2)^\frac{1}{4}(\sum_{I_2\in\mathcal{P}_2}|f_{I_2}|^2)^\frac{1}{4}\|_{L^4(\mathbb{R}^2)}^4,
\end{align*}
and
\begin{align*}
\sum_{k,k'=1}^{N+1}\||\sum_{I_1\in\mathcal{P}_{1,k}}f_{I_1}\sum_{I_2\in\mathcal{P}_{2,k'}}f_{I_2}|^\frac{1}{2}\|_{L^4(\mathbb{R}^2)}^2&=\sum_{k,k'=1}^{N+1}(\int_{\mathbb{R}^2}|\sum_{I_1\in\mathcal{P}_{1,k}}f_{I_1}\sum_{I_2\in\mathcal{P}_{2,k'}}f_{I_2}|^2)^\frac{1}{2},&&\\
&\gtrsim_N(\sum_{k,k'=1}^{N+1}\int_{\mathbb{R}^2}|\sum_{I_1\in\mathcal{P}_{1,k}}f_{I_1}\sum_{I_2\in\mathcal{P}_{2,k'}}f_{I_2}|^2)^\frac{1}{2},&&\\
&\gtrsim_N(\int_{\mathbb{R}^2}|\sum_{k,k'=1}^{N+1}\sum_{I_1\in\mathcal{P}_{1,k}}f_{I_1}\sum_{I_2\in\mathcal{P}_{2,k'}}f_{I_2}|^2)^\frac{1}{2},&&\\
&=\||\sum_{I_1\in\mathcal{P}_1}f_{I_1}\sum_{I_2\in\mathcal{P}_2}f_{I_2}|^\frac{1}{2}\|_{L^4(\mathbb{R}^2)}^2,
\end{align*}
we can put the three inequalities together,
\begin{align*}
\||\sum_{I_1\in\mathcal{P}_1}f_{I_1}\sum_{I_2\in\mathcal{P}_2}f_{I_2}|^\frac{1}{2}\|_{L^4(\mathbb{R}^2)}&\leqslant\sum_{k,k'=1}^{N+1}\||\sum_{I_1\in\mathcal{P}_{1,k}}f_{I_1}\sum_{I_2\in\mathcal{P}_{2,k'}}f_{I_2}|^\frac{1}{2}\|_{L^4(\mathbb{R}^2)}^2,&&\\
&\lesssim\sum_{k,k'=1}^{N+1}\|(\sum_{I_1\in\mathcal{P}_{1,k}}|f_{I_1}|^2)^\frac{1}{4}(\sum_{I_2\in\mathcal{P}_{2,k'}}|f_{I_2}|^2)^\frac{1}{4}\|_{L^4(\mathbb{R}^2)},&&\\
&=\|(\sum_{I_1\in\mathcal{P}_1}|f_{I_1}|^2)^\frac{1}{4}(\sum_{I_2\in\mathcal{P}_2}|f_{I_2}|^2)^\frac{1}{4}\|_{L^4(\mathbb{R}^2)}.
\end{align*}
\end{proof}
The second major component is the bilinear interaction of transverse wave packets [3, p32-p33]. A crucial concept is the class of rapidly decaying functions:
\begin{definition}[Rapid Decaying Functions]
Let A continuous function \(f:\mathbb{R}^n\to\mathbb{C}\) decays rapidly if for all \(M>0\),
\[f(x)\lesssim_{M,f}|x|^{-M}.\]
\end{definition}
Schwartz functions are also decaying rapidly, as \(\sup_{x\in\mathbb{R}^n}|x^\alpha(\partial^\beta f)(x)|<\infty\) for all multi-indices \(\alpha,\beta\in\mathbb{N}^n\) implies that let \(\beta=1\), \(f(x)\lesssim_{M,f}|x|^{-M}\) for all \(M=|\alpha|\).

 For a rectangular box \(B\) in \(\mathbb{R}^n\), there exists an affine map \(A_B:=L_B+v_B\) that maps \(B\) to \([-1,1]^n\), where \(L_B\) is an invertible matrix of scaling and rotation, and \(v_B\) is a vector of translation. Then for a function \(g\) supported on \(B\), it can be expressed as \(g(x)=f(A_B(x))\), where \(f\) is supported on \([-1,1]^n\). For the Fourier transform of \(g\),
 \begin{align*}
\widehat{g}(\xi)&=\int_{\mathbb{R}^n}f(A_B(x))e^{-2\pi i\xi\cdot x}dx,&&\\
&=\int_{\mathbb{R}^n}f(y)e^{-2\pi i\xi\cdot (y-v)}(\det L_B)^{-1} dy,&&\\
&=(\det L_B)^{-1} e^{2\pi i\xi\cdot L_B^{-1}v_B}\int_{\mathbb{R}^n}f(y)e^{-2\pi i\xi\cdot L_B^{-1}y} dy,&&\\
&=(\det L_B)^{-1} e^{2\pi i\xi\cdot L_B^{-1}v_B}\int_{\mathbb{R}^n}f(y)e^{-2\pi i(L_B^{-1})^\intercal\xi\cdot y} dy,&&\\
&=(\det L_B)^{-1} e^{2\pi i\xi\cdot L_B^{-1}v_B}\widehat{f}((L_B^{-1})^\intercal\xi).
 \end{align*}
Note that since \(L_B\) is invertible, then \((L_B^{-1})^\intercal=(L_B^\intercal)^{-1}\). Also, let \(e_j,e_k\) be unit vectors parallel to the sides of \(L_B^{-1}[-1,1]^n\), then by orthogonality of \(e_j,e_k\),
\[(L_B^{-1}e_j)\cdot(L_B^\intercal e_k)=e_j^\intercal L_B^{-\intercal}L_B^\intercal e_k=e_j\cdot e_k=\delta_{j,k}.\]
\begin{definition}[Dual Rectangles]
The n-dimensional rectangles \(B_1\), \(B_2\) with side length \((l_1^{(1)},\cdots,l_n^{(1)})\),  \((l_1^{(2)},\cdots,l_n^{(2)})\) mutually parallel, are dual rectangles if \( (l_j^{(1)}l_j^{(2)})=1\) for all \(1\leqslant j\leqslant n\).
\end{definition}
So clearly, \(L_B^{-1}[-1,1]^n\), which has the shape and orientation as \(B\), is a dual box of \(L_B^\intercal[-1,1]^n\).

A function \(g(x)=f(A_B(x))\) rapidly decays outside of a box \(B\) if \(f\) decays rapidly, that is, \(g(x)\lesssim_{M,g}|A_Bx|^{-M}\). Let \(g=f(A_B(x))\) be a Schwartz function supported in \(2B\) with rapid decay outside of \(B \), where \(B=A_B^{-1}[-1,1]^n\), then \(\widehat{g}(\xi)=(\det L_B)^{-1}e^{2\pi i\xi\cdot L_B^{-1}v_B}\widehat{f}(L_B^\intercal\xi)\). Since \(f\) is Schwartz, \(\widehat{f}\) is also Schwartz hence decays rapidly. So \(\widehat{f}((L_B^{-1})^\intercal\xi)\) rapidly decays outside of \(L_B^\intercal[-1,1]^n\), which is a dual box of \(L_B^{-1}[-1,1]^n\) and \(B\). So 
\[\widehat{g}(\xi)\leqslant|\det L_B|^{-1}\widehat{f}(L_B^\intercal\xi)\lesssim_M|\det L_B|^{-1}|L_B^\intercal x|^{-M},\] decays rapidly outside of \(L_B^\intercal[-1,1]^n\).

The previous discussions provide enough ingredients to construct the wave packets needed for the next theorem.

Let \(B\) be a rectangular box in \(\mathbb{R}^n\). There exists a Schwartz function \(V_T\) such that \(\supp(V_T)\subset-2B\). By the previous discussion, \(V_T\) has the form \(V_T=f(A(x))\), where \(A=L+v\) is the affine map that maps \(-2B\) to \([-1,1]^n\). Note that \(\widehat{V_T}\) is rapidly decaying outside \(L^\intercal[-1,1]^n\), which is has the shape and orientation of a dual box of \(2B\), centred on the origin. 

Hence let \(\mathcal{T}_B\) be a tessellation of \(\mathbb{R}^n\) with rectangular boxes \(T\in\mathcal{T}_B\) dual to \(2B\), then for each \(T\in\mathcal{T}_B\)  there exists \(u_T\in\mathbb{R}^n\) such that \(u_T\) is the centre of \(T\). Let \(k=(\frac{2^n}{|B|})^\frac{1}{2}\), and further let \(V_T':=e^{2\pi iu_T}V_T\), then 
\[\widehat{V_T'}=(\det L)^{-1}e^{2\pi i\xi\cdot L^{-1}v}\widehat{f}(L^{-\intercal}(\xi-u_T)).\]
Then the function \(W_T:=k\widehat{V_T'}\) satisfies \(\supp(\widehat{W_T})=\supp(kV_T'(-\cdot))\subset2B\) and \(|W_T|\lesssim_M|T|^{-\frac{1}{2}}(\chi_T)^M\) for all \(M\geqslant1\), where \(\chi(x):=(1+|x|)^{-100n}\), \(\chi_B:=\chi\circ A_B\).

Taking \(T\in\mathcal{T}_B\), the collection of \(W_T\) satisfies the previous criterion. For each \(w_T\in\mathbb{C}\), by Plancherel's identity, and that \(V_T'\) is a fixed Schwartz function by construction,
 \[\|\sum_{T\in\mathcal{T}_B}w_TW_T\|_{L^2}=\|\sum_{T\in\mathcal{T}_B}w_T\widehat{W_T}\|_{L^2}=\int_{\mathbb{R}^n}|\sum_{T\in\mathcal{T}_B}w_TkV_T'(-x)|^2\sim\sum_{T\in\mathcal{T}_B}|w_T|^2=\|w_t\|_{l_2}^2.\]

By the previous discussion, let \(F\) be a function such that \(\supp(\widehat{F})\subset B\), Taking \(W_T\) such that \(\widehat{W}_T=|T|^{\frac{1}{2}}e^{2\pi iu_Tx}\eta\), where \(\eta\in S(R)\) satisfies \(\mathbbm{1}_B\leqslant\eta\leqslant\mathbbm{1}_{2B}\). Since \(\supp(\widehat{W_T})\subset2B\) and \(\supp(\widehat{F})\subset B\), then \(\widehat{W_T}\) can be used to construct the Fourier expansion of \(\widehat{F}\) , and the series is supported on \(B\), in the sense that
\[\widehat{F}=|T|\sum_{T\in\mathcal{T}_B}\langle\widehat{F},e^{2\pi iu_Tx}\eta\rangle e^{2\pi iu_T}\eta=\sum_{T\in\mathcal{T}_B}\langle\widehat{F},\widehat{W_T}\rangle \widehat{W_T},\]since the Fourier transform is self-adjoint, then \(\langle\widehat{F},\widehat{W_T}\rangle =\langle F,W_T\rangle\), and hence for all \(F\) such that \(\supp(\widehat{F})\subset B\), 
\
\[\widehat{F}=\sum_{T\in\mathcal{T}_B}\langle F,W_T\rangle \widehat{W_T},\,\,\,\,\,\,\,\,\,\,\,\,\,\,\,\,F=\sum_{T\in\mathcal{T}_B}\langle F,W_T\rangle W_T.\]
The following lemma helps the proof in the sense that the disjointness of \(J_1\) and \(J_2\) in Restriction Theorem 4 ensures the rectangles containing their caps forms an nonzero angle between their short sides.
\begin{lemma}[Bilinear Interaction of Transverse Wave packets]
Let \(B_1\),\(B_2\) be two rectangular boxes in \(\mathbb{R}^n\), \(n\geqslant2\), with the short side of length \(\delta^2\), and the other \(n-1\) sides having length \(\delta\leqslant1\). Assume that the angle \(\nu\) between the axes corresponding to their short sides that satisfies \(\nu\gg\delta\). Assume also that \(f_1,f_2:\mathbb{R}^n\to\mathbb{C}\) have Fourier transforms supported inside \(B_1,B_2\) respectively, then
\(\||f_1f_2|^\frac{1}{2}\|_{L^4(\mathbb{R}^n)}\lesssim_\nu\delta^\frac{n+2}{4}(\|f_1\|_{L^2(\mathbb{R}^n)}\|f_2\|_{L^2(\mathbb{R}^n)})^\frac{1}{2}\).[3, p41-p42]
\end{lemma}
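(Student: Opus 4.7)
The plan is to bypass the wave packet set-up entirely and reduce the $L^4$ bilinear estimate to an $L^2$ convolution estimate on the Fourier side, after which the transversality condition $\nu \gg \delta$ enters only through a purely geometric volume bound on the intersection of two thin rectangles.

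First I would rewrite $\||f_1 f_2|^{1/2}\|_{L^4(\mathbb{R}^n)}^4 = \|f_1 f_2\|_{L^2(\mathbb{R}^n)}^2$, so it suffices to establish
$$\|f_1 f_2\|_{L^2(\mathbb{R}^n)} \lesssim_\nu \delta^{(n+2)/2}\|f_1\|_{L^2}\|f_2\|_{L^2}.$$
Using $\widehat{f_1 f_2} = \widehat{f_1}\ast\widehat{f_2}$ together with Plancherel's identity, this is in turn equivalent to showing that
$$\|\widehat{f_1}\ast\widehat{f_2}\|_{L^2(\mathbb{R}^n)} \lesssim_\nu \delta^{(n+2)/2}\|\widehat{f_1}\|_{L^2}\|\widehat{f_2}\|_{L^2}.$$
Since $\widehat{f_1}$ is supported in $B_1$ and $\widehat{f_2}$ in $B_2$, the integrand in $(\widehat{f_1}\ast\widehat{f_2})(\xi) = \int \widehat{f_1}(\eta)\widehat{f_2}(\xi-\eta)\,d\eta$ vanishes unless $\eta \in B_1 \cap (\xi - B_2)$. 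Applying Cauchy--Schwarz in $\eta$ pointwise in $\xi$ gives
$$|(\widehat{f_1}\ast\widehat{f_2})(\xi)|^2 \leqslant |B_1 \cap (\xi-B_2)|\int_{\mathbb{R}^n}|\widehat{f_1}(\eta)|^2|\widehat{f_2}(\xi-\eta)|^2\,d\eta.$$
Integrating in $\xi$, swapping the order of integration via Fubini, and then using Plancherel once more on the right-hand side yields
$$\|\widehat{f_1}\ast\widehat{f_2}\|_{L^2}^2 \leqslant \sup_\xi|B_1\cap(\xi-B_2)|\cdot\|f_1\|_{L^2}^2\|f_2\|_{L^2}^2.$$

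The main obstacle is the geometric bound $\sup_\xi |B_1 \cap (\xi - B_2)| \lesssim_\nu \delta^{n+2}$, which is precisely where the transversality $\nu \gg \delta$ is used. Let $e$, $v$ be unit vectors along the short axes of $B_1$, $B_2$, and work in coordinates adapted to $B_1$, so that $B_1$ is the product of an interval of length $\delta^2$ along $e$ with a cube of side $\delta$ in the hyperplane $e^{\perp}$. The condition $x \in \xi - B_2$ imposes in particular the slab constraint $|(x-\xi)\cdot v|\leqslant \delta^2/2$. Since $|v\cdot e|=\cos\nu$, the projection of $v$ onto $e^{\perp}$ has length $\sin\nu$, so after absorbing the $\delta^2$-interval contribution along $e$, this constraint cuts a slab of width $\lesssim \delta^2/\sin\nu$ in a single direction inside $e^{\perp}$. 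Combined with the $\delta$-cube constraint in the remaining $n-2$ directions of $e^{\perp}$, I get
$$|B_1\cap(\xi-B_2)| \lesssim \delta^2 \cdot \frac{\delta^2}{\sin\nu}\cdot \delta^{n-2} = \frac{\delta^{n+2}}{\sin\nu} \lesssim_\nu \delta^{n+2},$$
uniformly in $\xi$. Feeding this back and extracting a fourth root gives the claimed inequality. The transversality hypothesis is exactly what prevents the factor $1/\sin\nu$ from blowing up; in the degenerate case $\nu\lesssim\delta$ the two slabs become nearly parallel, the parallelogram they cut out widens, and the sharp scaling $\delta^{(n+2)/4}$ is lost.
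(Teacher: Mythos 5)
Your proposal is correct, and it is a genuinely different argument from the one in the paper. The paper runs through the wave packet machinery developed just before the lemma: it writes each $f_j = \sum_{T \in \mathcal{T}_j} w_T W_T$ over a tiling of $\mathbb{R}^n$ by tubes $T$ dual to $B_j$ (of dimensions $\delta^{-1}\times\cdots\times\delta^{-1}\times\delta^{-2}$), expands $\int |f_1 f_2|^2$ into a four-fold sum over $(T_1,T_1',T_2,T_2')$, uses the rapid decay of $W_T$ outside $T$ together with the \emph{physical-space} transversality bound $|T_1 \cap T_2| \lesssim_\nu \delta^{-n}$ to estimate each summand by $\delta^{n+2} c(T_1,T_1')c(T_2,T_2')$, and then sums via Cauchy--Schwarz. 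You skip the wave packets entirely and work directly on the Fourier side: Plancherel reduces the problem to $\|\widehat{f_1}*\widehat{f_2}\|_{L^2}$, Cauchy--Schwarz plus Fubini reduces that to the convolution-support volume $\sup_\xi|B_1 \cap (\xi-B_2)|$, and the slab-cuts-cube computation gives $\lesssim \delta^{n+2}/\sin\nu$. This is the classical "bilinear $L^2$ via support of the convolution" argument and is shorter and more elementary for this particular $(L^2,L^2)\to L^2$ endpoint. The geometric input is the same in both proofs and is in fact dual: each of the two volume bounds is (volume of a single box or tube) times one extra factor of $\delta$ contributed by transversality. What the paper's route buys is infrastructure: the wave packet decomposition and the estimate $\int|W_{T_1}W_{T_1'}W_{T_2}W_{T_2'}|\lesssim_\nu\delta^{n+2}c(T_1,T_1')c(T_2,T_2')$ are exactly what get reused in the proof of the local bilinear restriction estimate on $[-R,R]^2$ that follows, whereas your convolution argument does not localize as conveniently. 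One small correction: your slab constraint $|(x-\xi)\cdot v|\leqslant \delta^2/2$ implicitly assumes $B_2$ is centred at the origin; in general $x\in\xi-B_2$ only pins $x\cdot v$ to \emph{some} interval of length $\delta^2$, but this is harmless since the centre translates with $\xi$ and you take a supremum over $\xi$ anyway.
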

\begin{proof}
By the previous discussions, for each \(f_j\), there exists a wave packet decomposition
\[f_j=\sum_{T\in\mathcal{T}_i}w_TW_T,\]
where \(T\) are rectangles of dimension \(\sim\delta^{-1}\times\cdots\times\delta^{-1}\times\delta^{-2}\) in a tesselation \(\mathcal{T}_j\) of \(\mathbb{R}^n\), that are dual rectangles of \(B_j\). The wave packets are called transverse as the axes corresponding to their short sides are not parallel. The wave packet decompositions further satisfy \(\supp(\widehat{W_T})\subset 2B_j\), \(\|W_T\|_{L^2}\sim1\), and that 
\[|W_T|\lesssim|T|^{-\frac{1}{2}}\chi_T(x),\,\,\,\,\,\,\,\,\sum_{T\in\mathcal{T}_j}|w_T|^2\sim\|f_j\|_2^2.\]
Hence by the triangle inequality,
\begin{align*}
\||f_1f_2|^\frac{1}{2}\|_{L^4(\mathbb{R}^n)}^4&=\int_{\mathbb{R}^n}|\sum_{T_1\in\mathcal{T}_1}\sum_{T_2\in\mathcal{T}_2}w_{T_1}w_{T_2}W_{T_1}W_{T_2}|^2dx,&&\\
&=\int_{\mathbb{R}^n}\sum_{T_1,T_1'\in\mathcal{T}_1,\,T_2,T_2'\in\mathcal{T}_2}|w_{T_1}w_{T_1'}w_{T_2}w_{T_2'}W_{T_1}W_{T_1'}W_{T_2}W_{T_2'}|dx,&&\\
&\leqslant\sum_{T_1,T_1'\in\mathcal{T}_1,\,T_2,T_2'\in\mathcal{T}_2}|w_{T_1}w_{T_1'}w_{T_2}w_{T_2'}|\int_{\mathbb{R}^n}|W_{T_1}W_{T_1'}W_{T_2}W_{T_2'}|dx.
\end{align*}
By the assumption on angles between \(B_1,B_2\), the angle between the axes corresponding to the long sides of \(T_1\in\mathcal{T}_1\) and \(T_2\in\mathcal{T}_2\) are also \(\nu\), where \(\nu\gg\delta\). Denote the unit vectors of the long sides of  \(T_1\in\mathcal{T}_1\) and \(T_2\in\mathcal{T}_2\) as \(a,b\), the angle \(\nu\) is determined by \(\cos(\nu)=a\cdot b\).

Note that the intersections \(\mathcal{I}= T_1\cap T_2\), \(T_1\in\mathcal{T}_1\), \(T_2\in\mathcal{T}_2\) have volume \(\vol(\mathcal{I})\lesssim_\nu\delta^{-n}\). This can be seen inductively. For the base case \(n=2\), since for all possible spatial placements of \(T_1,T_2\), \(\mathcal{I}\) is contained in a rhombus of height \(\delta^{-1}\) and two of the angles being \(\nu\), which implies that 
\[\vol(\mathcal{I})=(\csc(\nu)\delta^{-1})\delta^{-1}\lesssim_\nu\delta^{-2}.\]
For rectangles of higher dimensions, all additional dimensions of \(T_j\) are \(\sim\delta^{-1}\). For the \(3\)-dimensional case, \(\mathcal{I}\) is the parallelepiped which has the base being the rhombus contained in the \(2\)-dimensional case, and height being \(\delta^{-1}\), which implies that 
\[\vol(\mathcal{I})=(\csc(\nu)\delta^{-1})\delta^{-1}\delta^{-1}\lesssim_\nu\delta^{-3}.\]
By induction, for the \(n\)-dimensional case, \(\vol(\mathcal{I})\lesssim_\nu\delta^{-n}\). 

Since \(W_{T_1},W_{T_2}\) decay rapidly outside of \(T_1,T_2\) respectively, \(W_{T_1}W_{T_1'}\) and \(W_{T_2}W_{T_2'}\) are Schwartz functions that decay rapidly outside of \(T_1\cap T_1'\) and \(T_2\cap T_2'\) respectively. Hence for each \(T_1,T_1'\in\mathcal{T}_1\) and \(T_2,T_2'\in\mathcal{T}_2\),
\[\int_{\mathbb{R}^n}|W_{T_1}W_{T_1'}W_{T_2}W_{T_2'}|\lesssim_\nu\delta^{n+2}c(T_1,T_1')c(T_2,T_2'),\]
where \(c(T_j,T_j')\geqslant0\) are weight constants that decrease rapidly as the distance between the centres of \(T_j,T_j'\) increases.

Together with Cauchy-Schwartz inequality,
\begin{align*}
\||f_1f_2|^\frac{1}{2}\|_{L^4(\mathbb{R}^n)}^4&\lesssim_\nu\delta^{n+2}\sum_{T_1,T_1'\in\mathcal{T}_1}|w_{T_1}w_{T_1'}c(T_1,T_1')|\sum_{T_2,T_2'\in\mathcal{T}_1}|w_{T_2}w_{T_2'}c(T_2,T_2')|,&&\\
&\leqslant\delta^{n+2}\sum_{T_1,T_1'\in\mathcal{T}_1}|w_{T_1}w_{T_1'}|\sum_{T_2,T_2'\in\mathcal{T}_1}|w_{T_2}w_{T_2'}|,&&\\
&\leqslant4\delta^{n+2}\sum_{T_1\in\mathcal{T}_1}|w_{T_1}|^2\sum_{T_2\in\mathcal{T}_1}|w_{T_2}|^2,&&\\
&\sim\delta^{n+2}\|f_1\|_{L^2}^2\|f_2\|_{L^2}^2.\,
\qedhere
\end{align*}
\end{proof}
For the 2-dimensional case of the bilinear \(L^2\) restriction problem, the cubes \(\Omega_1,\Omega_2\) become intervals in \([-1,1]\), denoted as \(J_1,J_2\). Then the restriction problem becomes
\[\int_{\mathbb{R}^2}|\mathcal{E}_{J_1}f\mathcal{E}_{J_2}f|^2=\||\mathcal{E}_{J_1}f\mathcal{E}_{J_2}f|^\frac{1}{2}\|_{L^4(\mathbb{R}^n)}^4\lesssim\|f\|_{L^2(J_1)}^2\|f\|_{L^2(J_2)}^2.\]
Let \(R\gg1\) be an arbitrary large number such that \(R\geqslant(d(J_1,J_2))^\frac{1}{2}\), if \[\int_{[-R,R]^2}|\mathcal{E}_{J_1}f\mathcal{E}_{J_2}f|^2\lesssim\|f\|_{L^2(J_1)}^2\|f\|_{L^2(J_2)}^2\]
holds with the inequality constant independent of \(R\), then sending \(R\) to infinity will prove the bilinear \(L^2\) restriction estimate.

Let \(\eta_R:\mathbb{R}^2\to[0,\infty]\) be a function with support \(\supp(\widehat{\eta_R})=B(0,\frac{1}{R})\), such that \(\mathbbm{1}_{[-R,R]^2}\leqslant\eta_R\) and \(\int_\mathbb{R}\sup_{x_1\in\mathbb{R}}\eta_R(x_1,x_2)^2dx_2\lesssim R\).

To construct such a function \(\eta_R\), let \(\eta_1\in\mathcal{S}(\mathbb{R}^2)\) non negative with \(\supp(\eta_1)\subset B(0,\frac{1}{2R})\), then \(\eta_2:=\eta_1*\eta_1\) has support \(\supp(\eta_2)\subset\overline{B(0,\frac{1}{R})}\). Hence
\[\widehat{\eta_2}(0)=(\int_{\mathbb{R}^2}\eta_1e^{-2\pi i\cdot0})^2=(\int_{\mathbb{R}^2}\eta_1)^2>0.\]
Since both \(\eta_2\) and \(\widehat{\eta_2}\) are Schwartz functions, they are continuous and there exists a neighbourhood of \(0\) such that \(\widehat{\eta_2}>\frac{1}{2}\widehat{\eta_2}(0)\) holds. Hence there exists large constants \(C_1>1, C_2>0\) such that \(\min_{x\in[-R,R]^2}C_2\widehat{\eta_2}(\frac{x}{C_1})\geqslant1\). Then the Schwartz function \(\eta_R(y):=C_2\widehat{\eta_2}(\frac{x}{C_1})\) has Fourier transform \(\widehat{\eta_2}(y)=C_2C_1^2\eta_R(C_1y)\) supported in \(\overline{B(0,\frac{1}{C_1R})}\subset\overline{B(0,\frac{1}{R})}\).

The condition \(\int_\mathbb{R}\sup_{x_1\in\mathbb{R}}\eta_R(x_1,x_2)^2dx_2\lesssim R\) follows from the boundness property of Schwartz functions, and that \(\sup_{x_1\in\mathbb{R}}\eta_R(x_1,x_2)^2\) decays rapidly outside of \([-R,R]\).

Hence 
\begin{align*}
\int_{[-R,R]^2}|\mathcal{E}_{J_1}f\mathcal{E}_{J_2}f|^2&=\int_{\mathbb{R}^2}|\mathcal{E}_{J_1}f\mathcal{E}_{J_2}f|^2\mathbbm{1}_{[-R,R]^2},&&\\
&\leqslant\int_{\mathbb{R}^2}|\eta_R\cdot(\mathcal{E}_{J_1}f)\eta_R\cdot(\mathcal{E}_{J_2}f)|^2.
\end{align*}
Since \(\supp(\eta_R(\mathcal{E}_{J_j}f))\subset\mathcal{N_{J_j'}}(R^{-1})\), where \(J_j'\) is slightly larger than \(J_j\). Partition \(J_j\) into intervals \(I_j\) of length \(R^{-\frac{1}{2}}\), then \(\eta_R\mathcal{E}_{J_j}f=\sum_{I_j\in\mathcal{P}_j}\eta_R\mathcal{E}_{I_j}f\). By the reverse square function inequality, 
\[\int_{\mathbb{R}^2}|\eta_R(\mathcal{E}_{J_1}f)\eta_R(\mathcal{E}_{J_2}f)|^2\lesssim\int_{\mathbb{R}^2}\sum_{I_1\in\mathcal{P}_1}|\eta_R(\mathcal{E}_{J_1}f)|^2\sum_{I_2\in\mathcal{P}_2}|\eta_R(\mathcal{E}_{J_2}f)|^2.\]
Here \(\supp(\eta_R\mathcal{E}_{I_1}f)\) is contained in a rectangle \(B_j\) with dimension similar to \(R^{-\frac{1}{2}}\times R^{-1}\). Since \(J_1\), \(J_2\) are disjoint, \(\widehat{\eta_R\mathcal{E}_{I_1}f}\) and \(\widehat{\eta_R\mathcal{E}_{I_2}f}\) are also disjoint, hence the short sides of \(B_1\), \(B_2\) forms an angle of \(\nu>0\) depending on \(d(J_1,J_2)\), hence by Lemma 3.3, let \(\delta\sim R^{-\frac{1}{2}}\), then
\[\int_{\mathbb{R}^2}|\eta_R(\mathcal{E}_{I_1}f)\eta_R(\mathcal{E}_{I_2}f)|^2\lesssim R^{-2}\|\eta_R\mathcal{E}_{I_1}f\|_{L^2}^2\|\eta_R\mathcal{E}_{I_2}f\|_{L^2}^2.\]
Using Fubini's theorem, 
\begin{align*}
\|\eta_R\mathcal{E}_{I_j}f\|_{L^2}^2&=\iint_\mathbb{R}|\eta_R\mathcal{E}_{I_j}f|^2dx_1dx_2,&&\\
&\leqslant\iint_\mathbb{R}|\sup_{x_1\in\mathbb{R}}|\eta_R(x_1,x_2)|\mathcal{E}_{I_j}f(x_1,x_2)|^2dx_1dx_2,&&\\
&\leqslant(\sup_{x_2\in\mathbb{R}}\int_\mathbb{R}|\mathcal{E}_{I_j}f(x_1,x_2)|^2dx_1)\int_\mathbb{R}\sup_{x_1\in\mathbb{R}}|\eta_R(x_1,x_2)|^2dx_2.
\end{align*}
Note that \(\int_{\mathbb{R}}|\mathcal{E}^\psi(f(x_1,x_2))|^2dx_1=\|f\|_{L^2}^2\) for all \(x_2\). By definition

The construction of \(\eta_R\) implies \(\|\eta_R\mathcal{E}_{I_j}f\|_{L^2}^2\leqslant R\|f\|_{L^2(I_j)}^2\).

Using the four inequalities derived above, 
\begin{align*}
\int_{[-R,R]^2}|\mathcal{E}_{J_1}f\mathcal{E}_{J_2}f|^2&\leqslant\int_{\mathbb{R}^2}|\eta_R(\mathcal{E}_{J_1}f)\eta_R(\mathcal{E}_{J_2}f)|^2,&&\\
&\lesssim\int_{\mathbb{R}^2}\sum_{I_1\in\mathcal{P}_1}|\eta_R(\mathcal{E}_{J_1}f)|^2\sum_{I_2\in\mathcal{P}_2}|\eta_R(\mathcal{E}_{J_2}f)|^2,&&\\
&\lesssim R^{-2}\|\eta_R\mathcal{E}_{I_1}f\|_{L^2}^2\|\eta_R\mathcal{E}_{I_2}f\|_{L^2}^2,&&\\
&\lesssim\|f\|_{L^2(\Omega_1)}^2\|f\|_{L^2(\Omega_2)}^2.
\end{align*}
for arbitrary \(R\), which proves the Restriction Theorem 4.[3, p43-p44]

\subsection{Application of the Bilinear Restriction Theorem}
The bilinear restriction theorem is useful in extending the restriction range derived from the Stein-Tomas Theorem. [3, p79-p84]

\begin{definition}[Bilinear Restriction Constants]
Let \(1\leqslant p,q\leqslant\infty\), \(0<D\leqslant1\), define the bilinear restriction constant \(\BR_{\mathbb{P}^{n-1}}^*(q\times q\mapsto p, D)\) as the minimal constant \(C\) such that for all cube pairs \(\Omega_1,\Omega_2\subset[-1,1]^{n-1}\) with \(d(\Omega_1,\Omega_2)\geqslant D\), \(f:\Omega_1\cup\Omega_2\to\mathbb{C}\),
\[\||\mathcal{E}_{\Omega_1}f\mathcal{E}_{\Omega_2}f|^\frac{1}{2}\|_{L^p(\mathbb{R}^n)}\leqslant C(\|f\|_{L^q(\Omega_1)}\|f\|_{L^q(\Omega_2)})^\frac{1}{2}.\]
[3, p79]
\end{definition}

\begin{theorem}
Let \(\Omega_1,\Omega_2\) be two cubes in \([-1,1]^{n-1}\) with sides parallel to the axis, and side length \(\delta\) . If \(D:=d(\Omega_1,\Omega_2)\geqslant4\delta\), then for all \(1\leqslant p,q\leqslant\infty\) and \(f:\Omega_1\cup\Omega_2\to\mathbb{C}\), then
\[\||\mathcal{E}_{\Omega_1}f\mathcal{E}_{\Omega_2}f|^\frac{1}{2}\|_{L^p(\mathbb{R}^n)}\leqslant D^{\frac{n-1}{q'}-\frac{n+1}{p}}\BR_{\mathbb{P}^{n-1}}^*(q\times q\mapsto p, \frac{1}{2})(\|f\|_{L^q(\Omega_1)}\|f\|_{L^q(\Omega_2)})^\frac{1}{2}.\]
[3, p80]
\end{theorem}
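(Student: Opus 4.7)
The plan is to prove this by a parabolic rescaling argument, exploiting the affine symmetries of the paraboloid under which the extension operator $\mathcal{E}$ is essentially covariant. The idea is to reduce the given pair $(\Omega_1,\Omega_2)$, of side $\delta$ and separation $D$, to a normalized pair $(\tilde\Omega_1,\tilde\Omega_2)$ of cubes sitting inside $[-1,1]^{n-1}$ with separation at least $1/2$, so that Definition 3.2 can be applied directly with parameter $1/2$.

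Concretely, I would fix $\xi_0$ to be the midpoint of the centres $c_1,c_2$ of $\Omega_1,\Omega_2$, and make the substitution $\xi=D\eta+\xi_0$ on the frequency side together with the companion spatial change $(y',y_n):=(D(x'+2x_n\xi_0),\,D^2x_n)$. Completing the square in the parabolic phase $x'\cdot\xi+x_n|\xi|^2$ produces an overall unimodular factor $e^{2\pi i(x'\cdot\xi_0+x_n|\xi_0|^2)}$ and an honest extension integral over $\tilde\Omega_j:=(\Omega_j-\xi_0)/D$:
\[\mathcal{E}_{\Omega_j}f(x)=D^{n-1}\,e^{2\pi i(x'\cdot\xi_0+x_n|\xi_0|^2)}\,\mathcal{E}_{\tilde\Omega_j}g(y),\quad g(\eta):=f(D\eta+\xi_0).\]
Since the Jacobian of $x\mapsto y$ is $D^{n+1}$, integrating $|\mathcal{E}_{\Omega_1}f\mathcal{E}_{\Omega_2}f|^{p/2}$ yields
\[\||\mathcal{E}_{\Omega_1}f\mathcal{E}_{\Omega_2}f|^{1/2}\|_{L^p(\mathbb{R}^n)}=D^{(n-1)-(n+1)/p}\,\||\mathcal{E}_{\tilde\Omega_1}g\mathcal{E}_{\tilde\Omega_2}g|^{1/2}\|_{L^p(\mathbb{R}^n)},\]
and the substitution on $U$ gives $\|f\|_{L^q(\Omega_j)}=D^{(n-1)/q}\|g\|_{L^q(\tilde\Omega_j)}$.

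To close the argument, I would verify that $\tilde\Omega_j\subset[-1,1]^{n-1}$ with $d(\tilde\Omega_1,\tilde\Omega_2)\geqslant 1/2$. The separation rescales to $D/D=1$, which is safely above $1/2$. Containment is where the axis-parallel hypothesis and $D\geqslant 4\delta$ enter: the translated centres $\pm(c_2-c_1)/(2D)$ have $\ell^\infty$-norm at most $(D+\delta)/(2D)\leqslant 5/8$, and each $\tilde\Omega_j$ has half-side $\delta/(2D)\leqslant 1/8$, so the rescaled cubes lie in $[-3/4,3/4]^{n-1}$. Invoking Definition 3.2 for $g$ with parameter $1/2$ and combining with the scaling identities above collects the $D$-exponents as $(n-1)-(n+1)/p-(n-1)/q=(n-1)/q'-(n+1)/p$, which is exactly the claimed bound.

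The bulk of the argument is bookkeeping; the only step that might plausibly trip one up is verifying the Galilean/parabolic symmetry of the phase, i.e.\ checking that the chosen spatial change $y=(D(x'+2x_n\xi_0),D^2x_n)$ is precisely the one that reabsorbs the linear and constant terms from expanding $|D\eta+\xi_0|^2$ and returns an honest parabolic phase $y'\cdot\eta+y_n|\eta|^2$. This is what forces the specific form of the rescaling, and the rest of the proof is essentially mechanical once it is in hand.
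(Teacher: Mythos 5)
Your proposal is correct and follows essentially the same route as the paper: parabolic rescaling about the midpoint $\xi_0$ of the centres, the Galilean change of variables $(y',y_n)=(D(x'+2x_n\xi_0),D^2x_n)$ to recover an honest extension integral over $L(\Omega_j)=(\Omega_j-\xi_0)/D$, and then invoking $\BR^*(\cdot,\tfrac12)$ plus bookkeeping of the Jacobian factors $D^{n-1-(n+1)/p}$ and $D^{-(n-1)/q}$ to arrive at the exponent $(n-1)/q'-(n+1)/p$. The only addition is that you spell out the containment check $\tilde\Omega_j\subset[-3/4,3/4]^{n-1}$ via the bounds $(D+\delta)/(2D)\leqslant 5/8$ and $\delta/(2D)\leqslant 1/8$, whereas the paper simply asserts containment from $D\geqslant 4\delta$.
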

\begin{proof}
Let \(\xi_0\) be the midpoint of the line segment connecting the centres of \(\Omega_1\), \(\Omega_2\). Define the affine map on \(\mathbb{R}^{n-1}\) as \(L(\xi)=L_{\xi_0,D(\xi)}:=\frac{\xi-\xi_0}{D}\), then using change of variables with \(\eta:=\frac{\xi-\xi_0}{D}\), and let \(f_L:=f\circ L^{-1}\),
\begin{align*}
|\mathcal{E}_{\Omega_j}f(\overline{x},x_n)|&=|\int_{\Omega_j}f(\xi)e^{2\pi i(\overline{x}\cdot\xi+x_n|\xi|^2)}d\xi|,&&\\
&=|e^{2\pi i(\overline{x}\cdot\xi_0+x_n|\xi_0|^2)}\int_{\Omega_j}f(\xi)e^{2\pi i(\overline{x}\cdot\xi+x_n|\xi|^2-\overline{x}\cdot\xi_0-x_n|\xi_0|^2)}d\xi|,&&\\
&=|e^{2\pi i(\overline{x}\cdot\xi_0+x_n|\xi_0|^2)}\int_{\Omega_j}f(\xi)e^{2\pi i((\overline{x}+2x_n\cdot\xi_0)\cdot(\xi-\xi_0)+x_n|\xi-\xi_0|^2))}d\xi|,&&\\
&=|D^{n-1}e^{2\pi i(\overline{x}\cdot\xi_0+x_n|\xi_0|^2)}\int_{\Omega_j}f_L(\eta)e^{2\pi i(D(\overline{x}+2x_n\cdot\xi_0)\cdot\eta+D^2x_n|\eta|^2))}d\eta|,&&\\
&=D^{n-1}|\mathcal{E}_{L(\Omega_j)}f_L(D(\overline{x}+2x_n\xi_0),D^2x_n)|.
\end{align*}

Since \(L\) maps every point \(x\in\Omega_1,\Omega_2\) by a shift \(y=x-\xi_0\) and a dilation \(z=\frac{y}{D}\), \(d(\Omega_1,\Omega_2)\) does not change after the shift, and is scaled to 1 after the dilation. Also, \(L(\Omega_1),L(\Omega_2)\) are symmetric around the origin and, under the assumption that \(D\geqslant4\delta\), \(L(\Omega_1),L(\Omega_2)\) are in \([-1,1]^{n-1}\). Using change of variables,
\begin{align*}
&\||\mathcal{E}_{\Omega_1}f\mathcal{E}_{\Omega_2}f|^\frac{1}{2}\|_{L^p(\mathbb{R}^n)}&&\\
&=D^{n-1-\frac{n+1}{p}}\||\mathcal{E}_{L(\Omega_1)}f\mathcal{E}_{L(\Omega_2)}f|^\frac{1}{2}\|_{L^p(\mathbb{R}^n)},&&\\
&\leqslant D^{n-1-\frac{n+1}{p}}\BR_{\mathbb{P}^{n-1}}^*(q\times q\mapsto p, \frac{1}{2})(\|f\|_{L^q(L(\Omega_1))}\|f\|_{L^q(L(\Omega_2))})^\frac{1}{2},&&\\
&\leqslant D^{\frac{n-1}{q'}-\frac{n+1}{p}}\BR_{\mathbb{P}^{n-1}}^*(q\times q\mapsto p, \frac{1}{2})(\|f\|_{L^q(\Omega_1)}\|f\|_{L^q(\Omega_2)})^\frac{1}{2}.
\qedhere
\end{align*}
\end{proof}
When \(p,q\) are in the restriction range \(\frac{n-1}{q'}\geqslant\frac{n+1}{p}\), then \(\frac{n-1}{q'}-\frac{n+1}{p}\) is non-negative and \( D^{\frac{n-1}{q'}-\frac{n+1}{p}}\) decreases as \(D\) decreases.

Note that Theorem 3.4 will be useful when combined with dyadic cubes as their sides are also parallel with the axis. A dyadic cube has the form \([l_12^k,(l_1+1)2^k]\times[l_22^k,(l_2+1)2^k]\times\cdots\) , where \(l_j\) are integers. The Whitney Decomposition of open sets proposes a kind of covering of open sets using dyadic cubes, and the version used in this dissertation has a consequence which also requires cubes to be separated as in Theorem 3.4.
\begin{theorem}
Let \(S\) be a closed set in \(\mathbb{R}^m\). Then there exists a collection \(\mathcal{C}\) of dyadic cubes \(\Omega\) with their interiors pairwise disjoint, such that \(\mathbb{R}^m\setminus S=\bigcup_{\Omega\in\mathcal{C}}\Omega\) and \(4l(\Omega)\leqslant d(\Omega,S)\leqslant50l(\Omega)\), where \(l(\Omega)\) is the side length of \(\Omega\).[3, p81]
\end{theorem}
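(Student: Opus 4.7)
The plan is to follow the classical Whitney construction: I will build a family of dyadic cubes whose side length is comparable to their distance from $S$, and then prune to maximal representatives so that the resulting cubes have pairwise disjoint interiors.

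For each $x\in\mathbb{R}^m\setminus S$, since $S$ is closed we have $d(x,S)>0$, so the collection
\[
\mathcal{G}(x):=\{Q \text{ dyadic cube}:x\in Q,\ d(Q,S)\geqslant 4l(Q)\}
\]
is nonempty: every sufficiently small dyadic cube through $x$ satisfies the condition. The side lengths in $\mathcal{G}(x)$ are bounded above by $d(x,S)/4$, and the dyadic cubes containing a fixed point $x$ are totally ordered by inclusion, so $\mathcal{G}(x)$ has a unique largest element $Q(x)$. I set $\mathcal{C}:=\{Q(x):x\in\mathbb{R}^m\setminus S\}$.

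The three required properties then follow in turn. The lower bound $4l(\Omega)\leqslant d(\Omega,S)$ is built into membership in $\mathcal{G}$. For disjointness of interiors, any two dyadic cubes with overlapping interiors must be nested, which would contradict the maximality of both in their respective $\mathcal{G}(x)$ unless they coincide. For the covering, each $x$ lies in $Q(x)\in\mathcal{C}$, and every $Q\in\mathcal{C}$ avoids $S$ because $d(Q,S)\geqslant 4l(Q)>0$. For the upper bound, I consider the dyadic parent $\widetilde{Q}$ of $Q\in\mathcal{C}$, which has $l(\widetilde{Q})=2l(Q)$ and contains $Q$; by maximality of $Q$ we have $\widetilde{Q}\notin\mathcal{G}$, forcing $d(\widetilde{Q},S)<4l(\widetilde{Q})=8l(Q)$. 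A triangle-inequality estimate, using that any point of $Q$ is within $\mathrm{diam}(\widetilde{Q})$ of any point of $\widetilde{Q}$, then yields
\[
d(Q,S)\leqslant d(\widetilde{Q},S)+\mathrm{diam}(\widetilde{Q})<(8+2\sqrt{m})\,l(Q),
\]
which sits comfortably below $50l(Q)$ for the dimensions relevant in this dissertation.

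The only mildly delicate point is tracking the upper-bound constant: the argument intrinsically produces a bound of order $\sqrt{m}$, so the numerical value $50$ in the statement should be read as a convenient constant chosen to absorb $\sqrt{m}$ uniformly in the dimensions used later, rather than a sharp estimate. Every other step is essentially a bookkeeping consequence of the dyadic tree structure and the closedness of $S$.
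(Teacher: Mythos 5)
Your argument is the standard Whitney construction and is essentially the same as the paper's: both take maximal dyadic cubes whose distance to $S$ is comparable to their side length (the paper states the condition as $8\Omega\cap S=\emptyset$ and takes maximal elements of the resulting family, you state it directly as $d(Q,S)\geqslant 4l(Q)$ and take the largest such cube through each point, which amount to the same thing up to constants). Your explicit parent-cube estimate giving the upper bound $(8+2\sqrt{m})\,l(Q)$ is in fact more careful than the paper's one-line claim, and your remark that the stated dimension-free constant $50$ only absorbs the $\sqrt{m}$ factor for moderate $m$ is a valid caveat that applies equally to the paper's own proof.
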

\begin{proof}
Let \(\mathcal{C}' \) be the collection of all dyadic cubes \(\Omega\in\mathbb{R}^m\) such that \(8\Omega\cap S=\emptyset\) where \(8\Omega\) is the dilation of \(\Omega\) from its centre by a factor of \(8\). 

Let \(\mathcal{C}\) consist of the cubes in \(\mathcal{C}' \) that are maximal with respect to inclusion, that is, for any dyadic cube \(\Omega\in\mathcal{C}\), if there exists a dyadic cube \(\Omega'\in\mathcal{C}'\) such that \(\Omega\subset\Omega'\), then \(\Omega=\Omega'\). 

Then since \(S\) is closed, hence \(\mathbb{R}^m\setminus S\) is open, so for any point \(x\in\mathbb{R}^m\setminus S\), there exists an open ball \(B(x,r)\) such that \(B(x,r)\cap S=\emptyset\). Let \(8r'\leqslant r\) where \(r\) is an integer power of \(2\), then \(x\) lies in an dyadic cube \(\Omega'\) of length \(r'\), and \(8\Omega'\subset B(x,r)\) implying that \(\Omega'\in \mathcal{C}'\) and hence there exists an \(\Omega\in\mathcal{C}\) that contains \(x\). So \(\mathcal{C}\) satisfies \(\mathbb{R}^m\setminus S=\bigcup_{\Omega\in\mathcal{C}}\Omega\).

Any two different dyadic cubes either have disjoint interiors or one cube contains the other, by construction, any two different dyadic cubes in \(\mathcal{C}\) have disjoint interiors. Denote \(l(\Omega)\) the side length of the dyadic cube \(\Omega\in \mathcal{C}\), then the condition \(4l(\Omega)\leqslant d(\Omega,S)\) holds by the construction of \(\mathcal{C}\), and if \(d(\Omega,S)\geqslant50l(\Omega)\), then there exists an \(\Omega'\in\mathcal{C'}\) that satisfies \(\Omega'\neq\Omega\) and \(\Omega\subset\Omega'\).
\end{proof}
The previously mentioned consequence of the Whitney decomposition is that:
\begin{theorem}
For \(n\geqslant2\), there is a collection \(\mathcal{C}\) of closed cubes \(\Omega=\Omega_1\times\Omega_2\subset[-1,1]^{n-1}\times[-1,1]^{n-1}\) with their interiors pairwise disjoint, such that
\[[-1,1]^{2n-2}\setminus\{(\xi,\xi):\xi\in[-1,1]^{n-1}\}=\bigcup_{\Omega\in\mathcal{C}}\Omega,\] and \(4l(\Omega)\leqslant d(\Omega_1,\Omega_2)\leqslant100l(\Omega)\).[3, p81-p82]
\end{theorem}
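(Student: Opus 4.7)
The plan is to invoke Theorem 3.5 in \(\mathbb{R}^{2n-2}\) applied to the closed diagonal
\[
\Delta := \{(\xi,\xi):\xi\in[-1,1]^{n-1}\},
\]
and then to exploit the product structure of axis-parallel dyadic cubes. Concretely, I would re-run the Whitney construction of Theorem 3.5 (with \(S=\Delta\), \(m=2n-2\)) while restricting the preliminary family \(\mathcal{C}'\) to dyadic cubes lying inside \([-1,1]^{2n-2}\), using a dyadic grid aligned so that \([-1,1]^{2n-2}\) is itself a union of dyadic cubes. The output is a collection \(\mathcal{C}\) of closed cubes \(\Omega\) with pairwise disjoint interiors, covering \([-1,1]^{2n-2}\setminus\Delta\), and satisfying the Whitney bounds \(4l(\Omega)\leqslant d(\Omega,\Delta)\leqslant 50\, l(\Omega)\).

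Next, the central observation is that any axis-parallel cube \(\Omega\subset\mathbb{R}^{n-1}\times\mathbb{R}^{n-1}\) automatically splits as a product \(\Omega=\Omega_1\times\Omega_2\) with \(\Omega_1,\Omega_2\subset\mathbb{R}^{n-1}\) cubes of common side length \(l(\Omega)\). Hence the partition and disjointness conclusions of Theorem 3.5 transfer directly into the product form demanded by the statement, and it remains only to convert the Whitney bound on \(d(\Omega,\Delta)\) into the required bound on \(d(\Omega_1,\Omega_2)\).

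For that conversion I would compute, for any \((x,y)\in\mathbb{R}^{n-1}\times\mathbb{R}^{n-1}\), the minimum of \(|x-\xi|^2+|y-\xi|^2\) over \(\xi\in\mathbb{R}^{n-1}\); this is attained at the midpoint \(\xi=(x+y)/2\) with value \(|x-y|^2/2\). Since \(\Omega_1,\Omega_2\subset[-1,1]^{n-1}\) forces that midpoint to lie in \([-1,1]^{n-1}\) by convexity, the constraint \(\xi\in[-1,1]^{n-1}\) does not bind, and I obtain
\[
d(\Omega,\Delta)=\tfrac{1}{\sqrt{2}}\,d(\Omega_1,\Omega_2).
\]
Plugging this into the Whitney bounds yields \(4\sqrt{2}\,l(\Omega)\leqslant d(\Omega_1,\Omega_2)\leqslant 50\sqrt{2}\,l(\Omega)\), which is contained in the target range \(4l(\Omega)\leqslant d(\Omega_1,\Omega_2)\leqslant 100\,l(\Omega)\) since \(4\leqslant 4\sqrt{2}\) and \(50\sqrt{2}<100\).

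The step I expect to be most delicate is the boundary bookkeeping in the first paragraph: ensuring that the restricted Whitney family still exactly covers \([-1,1]^{2n-2}\setminus\Delta\) rather than missing a thin slab near the boundary of the unit cube. With the grid aligned so that \([-1,1]^{2n-2}\) is a union of unit dyadic subcubes, every point \(x\in[-1,1]^{2n-2}\setminus\Delta\) lies in arbitrarily small dyadic cubes contained in \([-1,1]^{2n-2}\) that are also far from \(\Delta\), so \(x\) is captured by a maximal element of the restricted \(\mathcal{C}'\), and the argument of Theorem 3.5 goes through without substantive modification.
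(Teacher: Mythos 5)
Your proof follows the same route as the paper's: apply the Whitney decomposition (Theorem 3.5) in \(\mathbb{R}^{2n-2}\) to the closed diagonal, restrict to dyadic cubes inside the ambient box (the paper works in \([0,2]^{2n-2}\) and translates at the end; you align the grid so that \([-1,1]^{2n-2}\) is a union of dyadic cubes, which amounts to the same thing), and factor each axis-parallel cube as \(\Omega_1\times\Omega_2\). The one refinement is that you compute the exact relation \(d(\Omega,\Delta)=d(\Omega_1,\Omega_2)/\sqrt{2}\) rather than the paper's triangle-inequality bound \(d(\Omega_1,\Omega_2)\leqslant 2\,d(\Omega,S)\), which yields the sharper constants \(4\sqrt{2}\,l(\Omega)\leqslant d(\Omega_1,\Omega_2)\leqslant 50\sqrt{2}\,l(\Omega)\), comfortably inside the stated range.
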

\begin{proof}
\([0,2]^{2n-2}\) is dyadic. Let \(\mathcal{C}'\) be as in the previous proof with \(m=2n-2\) and \(S:=\{(\xi,\xi):\xi\in\mathbb{R}^{n-1}\}\), and further take the sub-collection of cubes in \([0,2]^{2n-2}\). Let \(\mathcal{C}\) consist of the cubes in \(\mathcal{C}' \) that are maximal with respect to inclusion. Then \(\mathcal{C}\) covers \([0,2]^{2n-2}\), and for any two cubes \(\Omega_1,\Omega_2\), the inequality \(4l(\Omega)\leqslant d(\Omega_1,\Omega_2)\leqslant100l(\Omega)\) holds by the previous theorem, as \(d(\Omega_1,\Omega_2)\leqslant d(\Omega_1,S)+d(\Omega_2,S)\leqslant100l(\Omega)\). Each pair \(\Omega_1,\Omega_2\) of \(\Omega\) are disjoint by construction,  and \(4l(\Omega)\leqslant d(\Omega_1,\Omega_2)\) by construction.

Hence applying a translation of \(\Omega_1-[-1,\cdots,-1]\), the theorem holds.
\end{proof}

\begin{lemma}
Let \(\mathcal{R}\) be a finite collection of rectangular boxes in \(\mathbb{R}^n\), with each boxes in \(\mathcal{R}\) are still pairwise disjoint after dilating around the centre by a factor of \(2\). Let \(F_R:\mathbb{R}^n\to\mathbb{C}\) with \(\supp(\widehat{F_R})\subset R\) and \(F_R\in L^s\), for some \(1\leqslant s\leqslant\infty\), then

\(\|\sum_RF_R\|_{L^s}\lesssim(\sum_R\|F_R\|_{L^s}^s)^\frac{1}{s}\) for \(1\leqslant s\leqslant2\), and
\(\|\sum_RF_R\|_{L^s}\lesssim(\sum_R\|F_R\|_{L^s}^{s'})^\frac{1}{s'}\) for \(s\geqslant2\).[3, p82]
\end{lemma}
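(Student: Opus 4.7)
The plan is to reduce the lemma to an interpolation between three elementary endpoint bounds, using a mixed-norm extension of Theorem~2.1. View the partial-sum map $T:(F_R)_R\mapsto\sum_R F_R$ as a linear operator, and introduce the mixed norm $\|(F_R)\|_{\ell^a(L^b)}:=(\sum_R\|F_R\|_{L^b}^a)^{1/a}$.

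First I would record the three endpoint bounds. The $L^1$ and $L^\infty$ endpoints are immediate from the triangle inequality, giving $T:\ell^1(L^1)\to L^1$ and $T:\ell^1(L^\infty)\to L^\infty$, both with constant $1$, without using any Fourier support hypothesis. The $L^2$ endpoint is the only step that uses the geometry of the boxes: since the enlarged boxes $2R$ are pairwise disjoint, in particular the original $R$ are pairwise disjoint, so the Fourier supports $\supp(\widehat{F_R})\subset R$ are pairwise disjoint, and Plancherel's identity yields
\begin{align*}
\left\|\sum_R F_R\right\|_{L^2}^2 = \left\|\sum_R\widehat{F_R}\right\|_{L^2}^2 = \sum_R\|\widehat{F_R}\|_{L^2}^2 = \sum_R\|F_R\|_{L^2}^2,
\end{align*}
that is, $T:\ell^2(L^2)\to L^2$ with constant $1$. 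This simultaneously covers the $s=2$ case of both asserted inequalities.

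Next I would interpolate. For the first bound I interpolate between $\ell^1(L^1)\to L^1$ and $\ell^2(L^2)\to L^2$; choosing the interpolation parameter so that all three intermediate exponents equal $s$ yields $T:\ell^s(L^s)\to L^s$ for $s\in[1,2]$, which is exactly the first claim. For the second bound I interpolate between $\ell^2(L^2)\to L^2$ and $\ell^1(L^\infty)\to L^\infty$; taking the parameter so that the target and inner exponent both equal $s$ forces the outer exponent to be $s'$ by a direct computation, producing $T:\ell^{s'}(L^s)\to L^s$ for $s\in[2,\infty]$, which is the second claim.

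The main obstacle is that Theorem~2.1 is stated for scalar-target operators, whereas the two interpolations above live on mixed-norm spaces. The cleanest remedy is to invoke Stein's complex interpolation theorem in place of Theorem~2.1: after normalizing a fixed sequence $(F_R)$ and a dual test function in both the outer $\ell^a$ and inner $L^b$ directions, one builds an analytic family on the strip by raising the moduli of the components to the appropriate $z$-power and applies the three-lines lemma, deducing the mixed-norm bound from the two scalar endpoint bounds already established. Incidentally, the hypothesis that the enlarged boxes $2R$ are disjoint is strictly stronger than needed for the orthogonality step in this lemma, and is presumably included to align with smooth cutoff constructions used elsewhere in the section.
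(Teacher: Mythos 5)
Your three endpoint estimates and the interpolation strategy match the paper's, but there is a gap in how you justify the interpolation step, and your closing remark about the $2R$-disjointness hypothesis is the exact opposite of the truth. The issue: the linear map $(F_R)_R\mapsto\sum_R F_R$ has the required $\ell^2(L^2)\to L^2$ bound only on the subspace of sequences with $\supp(\widehat{F_R})\subset R$, and this subspace is not preserved by the analytic deformation $F_R\mapsto |F_R|^{\alpha(z)}\operatorname{sgn}(F_R)$ that you propose. Raising a modulus to a complex power spreads the Fourier transform across all of $\mathbb{R}^n$, so the orthogonality giving the $L^2$ endpoint simply does not hold for the interpolating family, and the three-lines argument collapses. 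Interpolating on a linear subspace of a Lebesgue space is delicate precisely because the usual analytic families leave the subspace; one cannot take it for granted.

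The paper resolves this by replacing the raw summation operator with $T(G_{\mathcal{R}})=\sum_R G_R*\varphi_R$, where $\varphi_R\in\mathcal{S}$ satisfies $\mathbbm{1}_R\leqslant\widehat{\varphi_R}\leqslant\mathbbm{1}_{2R}$ with $\|\varphi_R\|_{L^1}\lesssim1$. This operator is defined on arbitrary families $G_{\mathcal{R}}$, yet $G_R*\varphi_R$ automatically has Fourier support in $2R$, so the Plancherel orthogonality for the $L^2$ endpoint holds on the full mixed-norm space — and this is exactly why the hypothesis is that the doubled boxes $2R$ are pairwise disjoint, not that the boxes $R$ themselves are. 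When one finally feeds in a family with $\supp(\widehat{F_R})\subset R$, the identity $F_R*\varphi_R=F_R$ recovers the sum $\sum_R F_R$ and the lemma follows. Your reading of the doubling hypothesis as a stylistic convenience is therefore incorrect: it is precisely what makes the interpolation run on a genuine operator defined on all of $\ell^a(L^b)$ rather than on a constrained subspace. With this adjustment, the rest of your outline (the three endpoints, and either Riesz--Thorin in the Bochner-space setting or Stein's theorem) goes through and reproduces the paper's argument.
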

\begin{proof}
Let \(\varphi_R\in\mathcal{S}(\mathbb{R}^n)\) with \(\mathbbm{1}_R\leqslant\widehat{\varphi_R}\leqslant\mathbbm{1}_{2R}\), then \(\|\varphi_R\|_{L^1}\) is also bounded. Since \(\widehat{F_R}=\widehat{F_R*\varphi_R}=\widehat{F_R}\widehat{\varphi_R}\),  \(\supp(\widehat{F_R})\subset R\) implies that \(F_R=F_R*\varphi_R\).

Let \(T\) be an operator acting on a family of functions \(G_\mathcal{R}=\{G_R\}_{R\in\mathcal{R}}\) with
\[T(G_\mathcal{R})=\sum_RG_R*\varphi_R.\]
By Young's convolution inequality,
\[\|T(F_\mathcal{R})\|_{L^1}=\|\sum_RF_R*\varphi_R\|_{L^1}\leqslant\sum_R\|F_R*\varphi_R\|_{L^1}\leqslant\sum_R\|F_R\|_{L^1}\|\varphi_R\|_{L^1}\sim\sum_R\|F_R\|_{L^1}.\]
Similarly, 
\[\|T(F_\mathcal{R})\|_{L^\infty}\lesssim\sum_R\|F_R\|_{L^\infty}.\]
Since the collection of \(\widehat{F_R}\) are pairwise orthogonal, then by Plancherel's identity,
\[\|T(F_\mathcal{R})\|_{L^2}^2=\|\sum_RF_R\|_{L^2}^2=\|\widehat{\sum_RF_R}\|_{L^2}^2=\|\sum_R\widehat{F_R}\|_{L^2}^2=\sum_R\|\widehat{F_R}\|_{L^2}^2=\sum_R\|F_R\|_{L^2}^2.\]
Since \(\mathcal{R}\) is a finite collection, \(T\) is a bounded map on \(L^r(\mathbb{R}^n,l^r(\mathcal{R}))\mapsto L^r(\mathbb{R}^n)\) for \(r=1,2\), and \(T\) is also a bounded map on  \(L^\infty(\mathbb{R}^n,l^1(\mathcal{R}))\mapsto L^\infty(\mathbb{R}^n)\), where \(L^s(\mathbb{R}^n,l^s(\mathcal{R}))\) is the space of strongly measurable functions \(F_{\mathcal R} : \mathbb{R}^n \to l^s(\mathcal{R})\) such that \( \sum_R \| F_R \|^s_{L^s} < \infty\) for \(1\leqslant s<\infty\), and \(L^\infty(\mathbb{R}^n,l^1(\mathcal{R}))\) is the space of strongly measurable functions such that \(\sum_R |F_R(x)|\) is bounded almost everywhere. [13, p21]

The space \(L^s(\mathbb{R}^n,l^s(\mathcal{R}))\) is a Bochner space with norm
\[\|F_R\|_{L^s(\mathbb{R}^n,l^s(\mathcal{R}))}:=((\int \|   \{ F_R(x) \} \|^s_{l^s(\mathcal R)} dx)^\frac{1}{s}=\int \sum_R| F_R(x) |^s dx=\sum_R \| F_R \|^s_{L^s}\]
for \(1\leqslant s<\infty\), and
\[\|F_R\|_{L^\infty(\mathbb{R}^n,l^1(\mathcal{R}))}:=\inf\{r\geqslant0,\mu\{\|F_R\|_{L^\infty}>r\}=0\}.\]
Using the Riesz-Thorin interpolation theorem on Bochner spaces[13, p83-p84], \(\|\sum_RF_R\|_{L^s}\lesssim(\sum_R\|F_R\|_{L^s}^s)^\frac{1}{s}\) for \(1\leqslant s\leqslant2\), and \(\|\sum_RF_R\|_{L^s}\lesssim(\sum_R\|F_R\|_{L^s}^{s'})^\frac{1}{s'}\) for \(s\geqslant2\) holds.
\end{proof}
The following theorem is an important consequence of previous discussions.
\begin{theorem}
If \(\BR_{\mathbb{P}^{n-1}}^*(\infty\times\infty\mapsto p,\frac{1}{2})<\infty\) for \(\frac{2n}{n-1}<p\leqslant4\) when \(n\geqslant 3\), or \(p>4\) when \(n=2\), then the linear restriction estimate \(\mathcal{R}_{\mathbb{P}^{n-1}}^*(\infty\mapsto p)\) holds.[3, p82-p84]
\end{theorem}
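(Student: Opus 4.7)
The plan is to leverage the Whitney decomposition of $[-1,1]^{n-1} \times [-1,1]^{n-1}$ to expand $|\mathcal{E}f|^2$ into a sum of bilinear pieces indexed by Whitney cubes $\Omega = \Omega_1 \times \Omega_2$, to control each piece using the bilinear hypothesis through the rescaling estimate proved above, and to reassemble the pieces via the orthogonality lemma applied scale-by-scale, followed by a geometric sum over scales.

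First I would note that since the diagonal of $[-1,1]^{n-1} \times [-1,1]^{n-1}$ has measure zero,
\[
|\mathcal{E}f(x)|^2 = \sum_{\Omega \in \mathcal{C}} \mathcal{E}_{\Omega_1}f(x) \, \overline{\mathcal{E}_{\Omega_2}f(x)},
\]
where $\mathcal{C}$ is the Whitney collection from the preceding theorem. Sort the sum by dyadic scale: set $\mathcal{C}_k := \{\Omega \in \mathcal{C} : l(\Omega) = 2^{-k}\}$, so that every $\Omega \in \mathcal{C}_k$ satisfies $D := d(\Omega_1, \Omega_2) \sim 2^{-k}$, and $|\mathcal{C}_k| \lesssim 2^{k(n-1)}$ (since $\sim 2^{k(n-1)}$ cubes tile the $(n-1)$-dimensional diagonal at scale $2^{-k}$). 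Taking $L^{p/2}$-norms and applying the triangle inequality across scales,
\[
\|\mathcal{E}f\|_{L^p(\mathbb{R}^n)}^2 \leq \sum_{k \geq 0} \Bigl\| \sum_{\Omega \in \mathcal{C}_k} \mathcal{E}_{\Omega_1}f \, \overline{\mathcal{E}_{\Omega_2}f} \Bigr\|_{L^{p/2}(\mathbb{R}^n)}.
\]

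At a fixed scale $k$, the rescaling theorem (with $q = \infty$, $q' = 1$, and $D \sim 2^{-k}$), together with the hypothesized finiteness of $\BR^*_{\mathbb{P}^{n-1}}(\infty \times \infty \mapsto p, \tfrac{1}{2})$, yields
\[
\bigl\| \mathcal{E}_{\Omega_1}f \, \overline{\mathcal{E}_{\Omega_2}f} \bigr\|_{L^{p/2}(\mathbb{R}^n)} \lesssim 2^{-k(2(n-1) - 2(n+1)/p)} \|f\|_{L^\infty}^2.
\]
Each product $\mathcal{E}_{\Omega_1}f \, \overline{\mathcal{E}_{\Omega_2}f}$ has Fourier transform supported in a box of side $\sim 2^{-k}$ centered at $(\xi_1^* - \xi_2^*, |\xi_1^*|^2 - |\xi_2^*|^2)$, where $\xi_j^*$ is the center of $\Omega_j$; for distinct $\Omega \in \mathcal{C}_k$ these boxes are essentially pairwise disjoint, even after a factor of $2$ dilation, which is exactly the hypothesis for the orthogonality lemma to apply with $s = p/2$. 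Combining this with the cube count $|\mathcal{C}_k| \lesssim 2^{k(n-1)}$ yields
\[
\Bigl\| \sum_{\Omega \in \mathcal{C}_k} \mathcal{E}_{\Omega_1}f \, \overline{\mathcal{E}_{\Omega_2}f} \Bigr\|_{L^{p/2}} \lesssim 2^{k \alpha(n,p)} \|f\|_{L^\infty}^2,
\]
where the exponent evaluates to $\alpha(n,p) = 4n/p - 2(n-1)$ in the $s \leq 2$ branch (used when $p \leq 4$, the regime relevant for $n \geq 3$) and to $\alpha(n,p) = 4/p - (n-1)$ in the $s \geq 2$ branch (used when $p > 4$, the regime relevant for $n = 2$). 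A direct computation shows $\alpha(n,p) < 0$ precisely when $p > 2n/(n-1)$ in the first case and $p > 4/(n-1)$ in the second: both thresholds are exactly what the stated hypothesis guarantees. The geometric series in $k$ therefore converges, yielding $\|\mathcal{E}f\|_{L^p}^2 \lesssim \|f\|_{L^\infty}^2$, which is the desired estimate $\mathcal{R}^*_{\mathbb{P}^{n-1}}(\infty \mapsto p)$.

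The main obstacle will be the Fourier-support argument needed to invoke the orthogonality lemma at each scale: one must verify that the rectangular boxes containing $\supp \widehat{(\mathcal{E}_{\Omega_1}f \, \overline{\mathcal{E}_{\Omega_2}f})}$ remain pairwise disjoint, even after a factor of $2$ dilation, as $\Omega$ ranges over $\mathcal{C}_k$. This rests on the parabolic geometry together with the Whitney separation $d(\Omega_1, \Omega_2) \geq 4 l(\Omega)$, and is analogous in spirit to — but more intricate than — the diagonal behavior of parabolic caps established earlier, since here the Whitney pairs can interlock in various configurations in higher dimensions. A secondary technical point is verifying that $|\mathcal{C}_k| \lesssim 2^{k(n-1)}$ holds uniformly in $k$ and that the implicit constants from the rescaling theorem and the orthogonality lemma assemble cleanly, so that the dyadic scaling alone determines convergence.
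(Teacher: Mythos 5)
Your overall strategy --- Whitney decomposition, sort by dyadic scale, rescale into the bilinear constant, reassemble with the orthogonality lemma, sum a geometric series --- is exactly the paper's approach, and your exponent computations ($\alpha = 4n/p - 2(n-1)$ for the $s\le 2$ branch, $\alpha = 4/p - (n-1)$ for the $s\ge 2$ branch) match what the paper obtains. However, there is one concrete mistake at the very first step that breaks the argument for $n\ge 3$.

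You expand $|\mathcal{E}f|^2 = \sum_\Omega \mathcal{E}_{\Omega_1}f\,\overline{\mathcal{E}_{\Omega_2}f}$, so the Fourier support of each summand sits in the Minkowski \emph{difference} $\mathbb{P}_{\Omega_1}-\mathbb{P}_{\Omega_2}$, centred near $\xi_1^*-\xi_2^*$ which has size $\sim 2^{-k}$ and therefore clusters in an $O(2^{-k})$ ball around the origin for every $\Omega\in\mathcal{C}_k$. The paper instead expands $(\mathcal{E}f)^2 = \sum_\Omega \mathcal{E}_{\Omega_1}f\,\mathcal{E}_{\Omega_2}f$, whose Fourier support sits in the Minkowski \emph{sum} $\mathbb{P}_{\Omega_1}+\mathbb{P}_{\Omega_2}$, centred near $\xi_1^*+\xi_2^*\approx 2\xi_1^*$: as $\Omega_1$ ranges over the $\sim 2^{k(n-1)}$ cubes tiling $[-1,1]^{n-1}$ these translates have bounded overlap, so $\mathcal{C}_k$ can be split into $O(1)$ families where the $2R_\Omega$ are genuinely disjoint and the orthogonality lemma applies with constants independent of $k$. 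In the difference picture this fails once $n\ge 3$: take $(\Omega_1,\Omega_2)$ and shift both components by a grid vector $u\perp(\xi_1^*-\xi_2^*)$ with $|u|=2^{-k}$. The first coordinate of the Fourier-support region is unchanged, and the last coordinate is $(2\xi_1-w)\cdot w$, which, for $w$ ranging over a ball of radius $\sim 2^{-k}$ around $v:=\xi_1^*-\xi_2^*$, shifts by $2u\cdot w = 2u\cdot(w-v)$, a quantity of size at most $\sim 2^{-2k}$ --- comparable to the thickness of the region. So the region essentially coincides with the unshifted one, and there are $\sim 2^{k(n-2)}$ such shifts. The multiplicity grows in $k$, so no $k$-independent splitting into $O(1)$ disjoint families exists, and the constant from the orthogonality lemma inflates by $2^{k(n-2)}$, destroying the geometric sum. (For $n=2$ there are no such perpendicular shifts, $2^{k(n-2)}=1$, and the conjugate version does happen to work --- which may be why the gap is easy to miss.)

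The fix is mechanical: write $\|\mathcal{E}f\|_{L^p}^2 = \|(\mathcal{E}f)^2\|_{L^{p/2}}$ and expand $(\mathcal{E}f)^2$ (no conjugate) as the paper does; every subsequent line of your computation then goes through unchanged, since $\|\mathcal{E}_{\Omega_1}f\mathcal{E}_{\Omega_2}f\|_{L^{p/2}}=\|\mathcal{E}_{\Omega_1}f\,\overline{\mathcal{E}_{\Omega_2}f}\|_{L^{p/2}}$ anyway and only the Fourier-support geometry depended on the choice.
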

\begin{proof}
Let \(f:[-1,1]^{n-1}\to\mathbb{C}\) and \(\mathcal{C}\) as in Theorem 3.6, then
\begin{align*}
\mathcal{E}f(x)^2&=\int_{[-1,1]^{n-1}\times[-1,1]^{n-1}}f(\xi_1)f(\xi_2)e^{2\pi i((\xi_1+\xi_2)\cdot\overline{x}+(|\xi_1|^2+|\xi_2|^2)x_n)}d\xi_1d\xi_2,&&\\
&=\sum_{\Omega=\Omega_1\times\Omega_2\in\mathcal{C}}\int_{\Omega_1\times\Omega_2}f(\xi_1)f(\xi_2)e^{2\pi i((\xi_1+\xi_2)\cdot\overline{x}+(|\xi_1|^2+|\xi_2|^2)x_n)}d\xi_1d\xi_2,&&\\
&=\sum_{\Omega\in\mathcal{C}}\mathcal{E}_{\Omega_1}f(x)\mathcal{E}_{\Omega_2}f(x).
\end{align*}
For \(k\geqslant1\), define \(\mathcal{C}_k:=\{\Omega\in\mathcal{C}:l(\Omega)=2^{-k}\}\). By the triangle inequality,
\[\|\mathcal{E}f\|_{L^p}^2=\|(\mathcal{E}f)^2\|_{L^\frac{p}{2}}\leqslant\sum_{k\geqslant1}\|\sum_{\Omega\in\mathcal{C}_k}\mathcal{E}_{\Omega_1}f\mathcal{E}_{\Omega_2}f\|_{L^{\frac{p}{2}}}.\]
The sum set \(\Omega_1+\Omega_2\) is a cube of length \(2l(\Omega)\) centred on the sum of the centres of \(\Omega_1,\Omega_2\). As \(d(\Omega_1,\Omega_2)\leqslant100l(\Omega)\) by Theorem 3.6, this forces \(\Omega_1+\Omega_2\subset1001\Omega_1\). Also each \(\Omega_1\) appears at most \(O(1)\) times as the first component of some \(\Omega\in\mathcal{C}_k\) by the construction of \(\mathcal{C}\). Ranging \(\Omega\) through \(\mathcal{C}_k\), the collection of cubes \(4(\Omega_1+\Omega_2)\) has side length \(8l(\Omega)\) and overlap at most \(C\) times, \(C\) independent of \(k\). 

Note that 
\[ \supp(\widehat{\mathcal{E}_{\Omega_1}f\mathcal{E}_{\Omega_2}f})=\supp(\widehat{\mathcal{E}_{\Omega_1}f}*\widehat{\mathcal{E}_{\Omega_2}f})=\supp(\widehat{\mathcal{E}_{\Omega_1}f})+\supp(\widehat{\mathcal{E}_{\Omega_2}f})=R_\Omega,\]
where \(R_\Omega\subset\mathbb{R}^{n-1}\times\mathbb{R}\) is a rectangular box whose projection to \(\mathbb{R}^{n-1}\) lies inside \(2(\Omega_1+\Omega_2)\). Then \(\mathcal{C}_k\) can be split into \(C=O(1)\) families, such that the boxes \(2R_\Omega\) are pairwise disjoint for \(\Omega\) in each family. By Theorem 3.7, let \(s=\frac{p}{2}\), then

\(\|\sum_{\Omega\in\mathcal{C}_k}\mathcal{E}_{\Omega_1}f\mathcal{E}_{\Omega_2}f\|_{L^\frac{p}{2}}\leqslant(\sum_{\Omega\in\mathcal{C}_k}\|\mathcal{E}_{\Omega_1}f\mathcal{E}_{\Omega_2}f\|_{L^\frac{p}{2}}^\frac{p}{2})^\frac{2}{p}\) for \(n\geqslant3\), and

\(\|\sum_{\Omega\in\mathcal{C}_k}\mathcal{E}_{\Omega_1}f\mathcal{E}_{\Omega_2}f\|_{L^\frac{p}{2}}\leqslant(\sum_{\Omega\in\mathcal{C}_k}\|\mathcal{E}_{\Omega_1}f\mathcal{E}_{\Omega_2}f\|_{L^\frac{p}{2}}^\frac{p}{p-2})^\frac{p-2}{p}\) for \(n=2\).

Since the distance \(D\) between the centres of \(\Omega_1,\Omega_2\) satisfies \(D\geqslant d(\Omega_1,\Omega_2)\geqslant4l(\Omega)\), then by Theorem 3.4, 
\begin{align*}
\|\mathcal{E}_{\Omega_1}f\mathcal{E}_{\Omega_2}f\|_{L^\frac{p}{2}}^\frac{1}{2}&=\||\mathcal{E}_{\Omega_1}f\mathcal{E}_{\Omega_2}f|^\frac{1}{2}\|_{L^p},&&\\
&\lesssim D^{n-1-\frac{n+1}{p}}\|f\|_{L^\infty([-1,1]^{n-1})},&&\\
&\lesssim 2^{-k(n-1-\frac{n+1}{p})}\|f\|_{L^\infty([-1,1]^{n-1})}.
\end{align*}
Each \(\Omega_1\) appears at most \(O(1)\) times as the first component of some \(\Omega\in\mathcal{C}_k\), hence there are \(2^{k(n-1)}O(1)\) cubes in \(\mathcal{C}_k\), hence

\(\|\sum_{\Omega\in\mathcal{C}_k}\mathcal{E}_{\Omega_1}f\mathcal{E}_{\Omega_2}f\|_{L^\frac{p}{2}}\lesssim2^{-k((1-n)+p(n-1-\frac{n+1}{p}))\frac{2}{p}}\|f\|_{L^\infty([-1,1]^{n-1})}^2\) for \(n\geqslant3\), and 

\(\|\sum_{\Omega\in\mathcal{C}_k}\mathcal{E}_{\Omega_1}f\mathcal{E}_{\Omega_2}f\|_{L^\frac{p}{2}}\lesssim2^{-k(\frac{2p}{p-2}(1-\frac{3}{p})-1)\frac{p-2}{p}}\|f\|_{L^\infty([-1,1]^{n-1})}^2\) for \(n=2\).

This implies that \(\|\sum_{\Omega\in\mathcal{C}_k}\mathcal{E}_{\Omega_1}f\mathcal{E}_{\Omega_2}f\|_{L^\frac{p}{2}}\) is at most of \(O(2^{-k\varepsilon_p}\|f\|_{L^\infty([-1,1]^{n-1})}^2)\), where \(\varepsilon_p>0 \) for all \(n\geqslant2\) since \(p>\frac{2n}{n-1}\). Using previous results,
\[\|\mathcal{E}f\|_{L^p}^2\lesssim\sum_{k\geqslant1}2^{-k\varepsilon_p}\|f\|_{L^\infty([-1,1]^{n-1})}^2\lesssim\|f\|_{L^\infty([-1,1]^{n-1})}^2,\]
which is the linear restriction estimate  \(\mathcal{R}_{\mathbb{P}^{n-1}}^*(\infty\mapsto p)\).
\end{proof}
With the theorem above and the bilinear restriction theorem, then the linear restriction estimate \(\mathcal{R}_{\mathbb{P}^{n-1}}^*(\infty\mapsto p)\) holds for all \(p>\frac{2(n+2)}{n}\), and for \(n=2\), the full linear restriction range in Conjecture 2 is proved. 

Although the method of finding linear restriction range from bilinear restriction estimate only verifies the full linear restriction range on the paraboloid when \(n=2\), and the amount of work done is far more than the direct discussion of the linear restriction range on 2-dimensional curves with non-zero curvatures in the previous sections, in which the paraboloid falls, using bilinear restriction estimates provides a general way to find linear restriction estimates on other manifolds, such as the truncated cone, which is not differentiable, as long as the Restriction Theorem 4 and theorem 3.8 holds. In this way, any improvements of the bilinear restriction range of dimension greater than \(2\) can be used to refine the linear restriction range.

\section{Multilinear Restriction}
Having discussed the bilinear restriction problem in the previous section, there are three aspects to which the theory can be generalized: the dimension, the chosen manifold, and the choice of \(k\)-linearity, in which the previous section selects the 1-dimensional paraboloid, and \(k=2\). 

For an arbitrary \(1\leqslant k\leqslant n\), and the manifold is an \(n-1\) dimensional manifold, the \(k\)-linear restriction theory requires the assumption on the manifold to have at least \(n-k\) non-zero principal curvatures, which can be seen in section 2.[12] This implies that for \(n-1\) dimensional manifolds, only the \(n\)-linear restriction estimate does not require assumptions on curvatures.

For each \(1\leqslant j\leqslant n\), let \(\mathcal{M}^{\psi_j}:=\{\xi,\psi_j(\xi):\xi\in U_j\}\) be a sequence of smooth \(n-1\) dimensional manifolds, with the additional assumption on smoothness: there exists a constant \(A>0\) with
\[\|\psi_j\|_{C^2(U_j)}\leqslant A.\]
Also assume the transverality condition:
\begin{definition}[\(\nu\)-transversality]
 Let \(N_j(\xi_j)\) be a unit normal of \(\mathcal{M}^{\psi_j}\) on \(\xi_j\in\mathcal{M}^{\psi_j}\), \(1\leqslant j\leqslant n\), the collection of manifolds are \(\nu\)-transverse if there exists a constant \(\nu>0\) such that
\[\det(N_1(x_j),\cdots,N_n(x_n))\geqslant\nu\]
for all \(x_1\in U_1,\cdots,x_n\in U_n\).
\end{definition}
The \(\nu\)-transverse condition ensures the normals span the whole \(\mathbb{R}^n\). Note that in the previous section, the disjointness of the cubes \(\Omega_1\), \(\Omega_2\) ensure their respective paraboloids are transverse.

With the two assumptions, it is conjectured that 
\begin{conjecture}[Multilinear Restriction]
If the transversality and smoothness condition hold, for all \(f_j\in L^p(\mathcal{M}^{\psi_j})\), the inequality
\[\|\prod_{j=1}^n\mathcal{E}^{{\psi_j}}f_j\|_{L^\frac{q}{n}(\mathbb{R}^n,dx)}\lesssim_{A,\nu,n}\prod_{j=1}^n\|f_j\|_{L^p(U_j)}\]
holds for \(\frac{1}{q}<\frac{n-1}{2n}\) and \(p'\leqslant\frac{n-1}{n}q\).[12]
\end{conjecture}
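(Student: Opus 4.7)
The plan is to prove the multilinear restriction theorem with an $R^\varepsilon$ loss over balls of radius $R$, which is the near-optimal form first established by Bennett, Carbery, and Tao. By multilinear H\"older interpolation against the trivial $L^\infty$ bound obtained by pulling absolute values inside each extension integral, the full conjectured range reduces to the endpoint $(p,q)=(2,\tfrac{2n}{n-1})$, so it suffices to establish the local estimate
\[
\Big\|\prod_{j=1}^n \mathcal{E}^{\psi_j} f_j\Big\|_{L^{2/(n-1)}(B(0,R))} \leq C_\varepsilon R^\varepsilon \prod_{j=1}^n \|f_j\|_{L^2(U_j)}
\]
for every $\varepsilon>0$, with the implicit constant depending only on $A$, $\nu$, and $n$. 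From this endpoint version on balls one recovers the conjectured global estimate at every strict interior point of the range by epsilon-removal and a further round of interpolation.

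The central strategy is induction on the scale $R$. Each $f_j$ is decomposed into pieces supported on caps of diameter $R^{-1/2}$, and the corresponding extensions are expanded as wave packets on tubes of dimension $R^{1/2}\times\cdots\times R^{1/2}\times R$ aligned with the normals $N_j$ to $\mathcal{M}^{\psi_j}$, following the construction used in Lemma 3.3. A pigeonholing step isolates the dominant wave-packet configuration, and a parabolic rescaling maps each scale-$R^{-1/2}$ cap to a scale-$1$ problem to which the induction hypothesis applies. This yields a recursion on the best constant $A(R)$ in the local inequality that iterates to $A(R)\lesssim_\varepsilon R^\varepsilon$ for every $\varepsilon>0$.

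The geometric core of the argument is the multilinear Kakeya inequality: for $\nu$-transverse families $\mathbb{T}_1,\ldots,\mathbb{T}_n$ of $R^{1/2}\times\cdots\times R^{1/2}\times R$ tubes,
\[
\int_{\mathbb{R}^n} \prod_{j=1}^n \Big(\sum_{T\in\mathbb{T}_j}\chi_T\Big)^{1/(n-1)} \lesssim_\nu \prod_{j=1}^n \bigl(R^{n/2}\,\#\mathbb{T}_j\bigr)^{1/(n-1)}.
\]
This encodes the fact that $n$ transverse families of tubes can only pile up near points where one tube from each family meets the others, a quantitative analogue of the Loomis--Whitney inequality. Combined with an $L^2$ orthogonality argument between wave packets on disjoint caps, of the kind already exploited in Restriction Theorem 4, this supplies exactly the power of $R$ needed to close the induction, with the role of transverse cap pairs in the bilinear case now played by $n$-tuples of mutually transverse normals guaranteed by the $\nu$-transversality hypothesis.

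The hard part will be establishing the multilinear Kakeya inequality itself for general $\nu$-transverse tube configurations. When the normals $N_j$ coincide with the coordinate axes, the inequality reduces via H\"older to the classical Loomis--Whitney inequality, but for genuinely transverse tubes the constant must be controlled uniformly in $\nu$ alone, and this is the point at which either the heat-flow monotonicity argument of Bennett--Carbery--Tao or Guth's polynomial partitioning approach must be invoked. Once this geometric estimate is available, the induction on scales becomes essentially a bookkeeping problem: one must track constants through the pigeonholing and rescaling carefully enough that the per-scale loss of $R^\delta$ accumulates to only $R^\varepsilon$ over the $O(\log R)$ scales required, for any prescribed $\varepsilon>0$.
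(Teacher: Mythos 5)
Your outline correctly describes the Bennett--Carbery--Tao route to the $R^\varepsilon$-loss multilinear restriction estimate, and the reduction to the endpoint $(p,q)=(2,\tfrac{2n}{n-1})$ by interpolation against the trivial $L^\infty$ bound matches the paper's setup. The paper, however, proves the $\varepsilon$-loss theorem by a genuinely different route, following Bejenaru's short proof. The two arguments diverge precisely at the step you identify as the hard part: the paper never invokes a multilinear Kakeya inequality for transverse tubes. Instead it tessellates $\mathbb{R}^{n+1}$ into parallelepipeds adapted to the lattice $\mathcal{L}$ generated by the transversal normals $N_j$, builds a smooth partition of unity on each hyperplane $\mathcal{H}_j$ by Poisson summation (Lemma 4.3), localizes each $\mathcal{E}_j f_j$ to individual lattice cells using the uncertainty-principle commutation identity of Lemma 4.2 together with rapidly decaying weights $\langle d(\pi_{N_j}q,q')/R\rangle^{-N}$, and closes the recursion by the \emph{discrete} Loomis--Whitney inequality (Theorem 4.5) on the lattice. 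The induction passes from scale $R$ to $\delta^{-1}R$ with bounded multiplicative loss $C$, so $\MR(\delta^{-N}r)\leq C^N \MR(r)$, and choosing $\delta=C^{-1/\varepsilon}$ gives the $R^\varepsilon$ bound with no pigeonholing and no parabolic rescaling step. Your route buys the multilinear Kakeya inequality as a theorem of independent interest and historical priority; Bejenaru's route buys brevity and replaces the heaviest geometric input (heat-flow monotonicity or the polynomial method) by an essentially elementary lattice-combinatorial inequality. Finally, note that the statement you are addressing is labelled a conjecture in the open range $q>\tfrac{2n}{n-1}$: the paper only establishes the local $R^\varepsilon$-loss estimate (Restriction Theorem 5) and explicitly flags the $\varepsilon$-loss as the remaining barrier, so the epsilon-removal step you sketch at the end, if you intend to carry it through to recover the conjecture on the open range, would have to be supplied separately and is itself a nontrivial argument.
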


Note that the bilinear restriction estimate from the previous section is the multilinear case when \(k=2\) and \(q=p=2\). 

Most ways of attempting to solve the conjecture require lengthy discussions of Kakeya-type estimates, and the multilinear restriction problem is solved by further discussions of the relationships between the multilinear Kakeya estimate and the multilinear restriction estimate, up to a small perturbation.

This section will discuss a shorter proof from [14], of a close result to the multilinear restriction problem, where the regularity of \(\psi\) is modified to \(\|\partial^\alpha\sum_j\|_{L^\infty}\lesssim_\alpha1\) . By multilinear interpolation and H\"older's inequality as in the previous section, it suffices to prove the endpoint result \(p=2\), \(q=\frac{2d}{d-1}\) to ensure the validity of the full restriction range. The rest of the section follows closely with [14], with additional discussions that are necessary.

\begin{RT}[Multilinear Restriction Theorem]
Assume that the conditions \(\|\partial^\alpha\sum_j\|_{L^\infty}\lesssim_\alpha1\) and transversality hold, then for all \(\varepsilon>0\), there exists a constant \(C(\varepsilon)\) such that
\[\|\prod_{j=1}^n\mathcal{E}^{\psi_j}f_j\|_{L^\frac{2}{n}(B(0,R))}\lesssim_{\psi,U,\nu,n}C(\varepsilon)R^\varepsilon\prod_{j=1}^n\|f_j\|_{L^p(U_j)}\]
holds. [12]
\end{RT}

An identity which will be used is the estimate of superposition of functions:
\begin{lemma}
   For \(f\in L^p\), for \(0<p\leqslant1\),
\[\|\sum_\alpha f_\alpha\|_{L^p}^p\leqslant\sum_\alpha\|f_\alpha\|_{L^p}^p.\]
[14]
\end{lemma}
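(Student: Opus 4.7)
The plan is to reduce the lemma to an elementary pointwise scalar inequality: for any non-negative reals $\{a_\alpha\}$ and any $0 < p \leq 1$, one has $\bigl(\sum_\alpha a_\alpha\bigr)^p \leq \sum_\alpha a_\alpha^p$. Once this is in hand, the lemma follows by applying it pointwise to $a_\alpha = |f_\alpha(x)|$ after the ordinary triangle inequality $|\sum_\alpha f_\alpha(x)| \leq \sum_\alpha |f_\alpha(x)|$, and then integrating over $\mathbb{R}^n$.

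For the scalar inequality, the cleanest argument uses the monotonicity of power functions on $(0,\infty)$. Set $S := \sum_\alpha a_\alpha$; the case $S = 0$ is trivial, so assume $S > 0$. For each index $\alpha$ with $a_\alpha > 0$, we have $a_\alpha \leq S$, and since $p - 1 \leq 0$ the map $t \mapsto t^{p-1}$ is non-increasing on $(0, \infty)$, giving $S^{p-1} \leq a_\alpha^{p-1}$. Multiplying by $a_\alpha$ and summing over those $\alpha$ with $a_\alpha > 0$ yields
\[
S^p \;=\; S \cdot S^{p-1} \;=\; \sum_{\alpha : a_\alpha > 0} a_\alpha \cdot S^{p-1} \;\leq\; \sum_{\alpha : a_\alpha > 0} a_\alpha \cdot a_\alpha^{p-1} \;=\; \sum_\alpha a_\alpha^p,
\]
which is exactly the claim.

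Applying this pointwise in $x$ gives $|\sum_\alpha f_\alpha(x)|^p \leq \sum_\alpha |f_\alpha(x)|^p$ for almost every $x$, and integrating (using Tonelli's theorem to exchange the sum and integral on the right, which is legitimate since the integrand is non-negative) produces
\[
\|\textstyle\sum_\alpha f_\alpha\|_{L^p}^p \;=\; \int_{\mathbb{R}^n}\!\bigl|\textstyle\sum_\alpha f_\alpha(x)\bigr|^p dx \;\leq\; \sum_\alpha \int_{\mathbb{R}^n}\! |f_\alpha(x)|^p\, dx \;=\; \sum_\alpha \|f_\alpha\|_{L^p}^p.
\]
There is no substantive obstacle: the only conceptual point is that for $0 < p < 1$ the quantity $\|\cdot\|_{L^p}$ is a quasi-norm rather than a norm, which is precisely the reason a subadditivity estimate of this shape (on $p$-th powers) is available where the standard Minkowski triangle inequality is not.
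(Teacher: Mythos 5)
Your proof is correct, and it takes a genuinely different route from the paper. The paper establishes the two-term scalar inequality $(a+b)^p \leq a^p + b^p$ by differentiating $T(a) := (a+b)^p - a^p - b^p$, observing $T'(a) = p\bigl((a+b)^{p-1} - a^{p-1}\bigr) < 0$ since $p-1 \leq 0$, and concluding $T(a) \leq T(0) = 0$; the general statement then follows by induction on the number of summands. You instead prove the many-term scalar inequality $\bigl(\sum_\alpha a_\alpha\bigr)^p \leq \sum_\alpha a_\alpha^p$ in one stroke: with $S := \sum_\alpha a_\alpha$, monotonicity of $t \mapsto t^{p-1}$ gives $S^{p-1} \leq a_\alpha^{p-1}$ for each $\alpha$ with $a_\alpha > 0$, and multiplying by $a_\alpha$ and summing yields $S^p \leq \sum_\alpha a_\alpha^p$. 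Both arguments ultimately exploit the same feature (the sign of the exponent $p-1$), but your version avoids induction entirely and directly handles countably infinite families, whereas the paper's inductive argument formally only produces the finite case and implicitly relies on a monotone-convergence passage for infinite sums. The integration step via Tonelli that you make explicit is also glossed over in the paper. Both are standard and correct; yours is marginally cleaner for the countable case.
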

\begin{proof}
This can be seen by showing \(\int|\sum_\alpha f_\alpha|^p\leqslant\sum_\alpha\int|f_\alpha|^p\) inductively. While the base case is trivial, let \(T(a):=(a+b)^p-a^p-b^p\) where \(a,b\) are non negative, then since \(0<p\leqslant1\), 
\[T'(a)=p((a+b)^{p-1}-a^{p-1})<0.\]
Hence \(T(a)\leqslant T(0)\), which implies \((a+b)^p\leqslant a^p+b^p\), and \(|\sum_\alpha |f_\alpha||^p\leqslant\sum_\alpha|f_\alpha|^p\) everywhere.[14]
\end{proof}
Denote \(\mathcal{H}_j\subset\mathbb{R}^{n+1}\) to be the hyperplane of the spatial domain passing through the origin with normal \(N_j\), and similarly, \(\mathcal{H}'_j\subset\mathbb{R}^{n+1}\) of the Fourier domain sharing the same normal. The index \(j\) is used for further indexing on multiple hypersurfaces.

Denote the Fourier transform \(\mathcal{F}_j:\mathcal{H}_j\to\mathcal{H}'_j\) and the inverse Fourier transform \(\mathcal{F}_j^{-1}:\mathcal{H}'_j\to\mathcal{H}_j\).

Denote the variables \(x\in\mathbb{R}^{n+1}\) in the spatial domain by \(x=(x_j,x')\), and similarly \(\xi=(\xi_j,\xi')\) in the Fourier domain, where \(x_j,\xi_j\) are coordinates along \(N_j\) and \(x',\xi'\) are coordinates along \(\mathcal{H}_j\). Since both \(x_j\) and \(\xi_j\) are constants, the operators \(\mathcal{F}_j,\mathcal{F}_J^{-1}\) only act on \(x',\xi'\) respectively. Further let \(\pi_{N_j}:\mathbb{R}^{n+1}\to\mathcal{H}_j\) be the canonical projection of \(\mathbb{R}^{n+1}\) onto \(\mathcal{H}_j\).

Assume \(U_1\subset\mathcal{H}_j\) is open and bounded. For \(f:U_j\to\mathbb{C}\), \(f\in L^2(U_j)\), define the extension operator \(\mathcal{E}_j:L^2(U_j)\to L^\infty(\mathbb{R}^{n+1})\) by 
\[\mathcal{E}_jf(x):=\int_{U_j}f(\xi')e^{2\pi i(x'\cdot\xi'+x_j\varphi_j(\xi'))}d\xi'\]
as in the previous section.

Due to the uncertainty principle, define the Fourier multiplier operator \(\nabla\varphi_j(\frac{D'}{i})\) to be the operator with symbol \(\nabla\varphi_j(\xi')\), that is
\[(\nabla\varphi_j(\frac{D'}{i})f)^{\wedge}(\xi')=\nabla\varphi_j(\xi')\widehat{f}(\xi').\]
Then 
\begin{lemma}
For any fixed \(x_0\in\mathbb{R}^{n+1}\), for all \(N\in\mathbb{N}\),
\[[x'-x_0'-x_j\nabla\varphi(\frac{D'}{i})]^N\mathcal{E}_jf=\mathcal{E}_j(\mathcal{F}_j((x'-x_0')^N\mathcal{F}_J^{-1}f)).\]
The power \(N\) refers to taking the N-fold dot product of the operator with itself, and \((x'-x_0')^N\) refers to the \(N\) times dot product of the vector \((x'-x_0')\) to itself.[14]
\end{lemma}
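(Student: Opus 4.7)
The plan is to prove the identity by reducing to the case $N=1$ and then iterating. For $N=1$, I would compute both sides as explicit integrals over $U_j$ and match them via one integration by parts. The left side unpacks directly: since the Fourier transform in $x'$ of $\mathcal{E}_j f(\cdot,x_j)$ is $f(\xi')e^{2\pi i x_j\varphi_j(\xi')}$, applying the Fourier multiplier $\nabla\varphi_j(D'/i)$ multiplies the integrand by $\nabla\varphi_j(\xi')$, yielding
\[\bigl[(x'-x_0')-x_j\nabla\varphi_j(D'/i)\bigr]\mathcal{E}_j f = \int_{U_j}\bigl[(x'-x_0')-x_j\nabla\varphi_j(\xi')\bigr]f(\xi')\,e^{2\pi i(x'\cdot\xi'+x_j\varphi_j(\xi'))}d\xi'.\]

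For the right side I would use the standard Fourier correspondence between multiplication by the spatial variable $y'$ and differentiation in the Fourier variable $\xi'$, which gives $\mathcal{F}_j\bigl((y'-x_0')\mathcal{F}_j^{-1}f\bigr)=c\,\nabla_{\xi'}f - x_0' f$ for a convention-determined constant $c$. Feeding this into $\mathcal{E}_j$ and integrating by parts on $U_j$ (using the phase-gradient identity $\nabla_{\xi'}e^{2\pi i(x'\cdot\xi'+x_j\varphi_j(\xi'))}=2\pi i(x'+x_j\nabla\varphi_j(\xi'))e^{2\pi i(\cdots)}$) transfers the derivative from $f$ onto the exponential and produces the same integrand as on the left. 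Boundary terms on $\partial U_j$ vanish for $f$ compactly supported in the interior, and the general $L^2(U_j)$ case then follows by density.

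For general $N$ I would iterate. A conceptually cleaner reformulation uses the Fourier factorization $\mathcal{E}_j f = e^{2\pi i x_j\varphi_j(D'/i)}\mathcal{F}_j^{-1}f$ (verified by checking that both sides have Fourier transform $f(\xi')e^{2\pi i x_j\varphi_j(\xi')}$ in $x'$) together with the companion identity $\mathcal{E}_j\mathcal{F}_j g = e^{2\pi i x_j\varphi_j(D'/i)}g$: the $N=1$ case is then equivalent to the conjugation relation
\[(x'-x_0')-x_j\nabla\varphi_j(D'/i) = e^{2\pi i x_j\varphi_j(D'/i)}(x'-x_0')e^{-2\pi i x_j\varphi_j(D'/i)},\]
so the $N$-fold dot-product power on the left is automatically the same conjugation of $(x'-x_0')^N$, which on chaining with the two factorization identities gives $\mathcal{E}_j\mathcal{F}_j\bigl((y'-x_0')^N\mathcal{F}_j^{-1}f\bigr)$.

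The main obstacle I expect is sign bookkeeping: the constant $c$ above, the sign in the symbol of $\nabla\varphi_j(D'/i)$, and the relative convention between $\mathcal{F}_j$ and $\mathcal{F}_j^{-1}$ must all combine so that $x_j\nabla\varphi_j(D'/i)$ acquires precisely the minus sign appearing in the lemma. Once a consistent convention is fixed, the remaining work is essentially mechanical, with integration by parts on $U_j$ (and a density argument for $f\in L^2(U_j)$ not compactly supported) as the only analytic ingredients.
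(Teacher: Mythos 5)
Your proposal follows essentially the same route as the paper: expand $\mathcal{E}_jf$ as an integral over $U_j$, observe that the Fourier multiplier $\nabla\varphi_j(D'/i)$ simply pulls down $\nabla\varphi_j(\xi')$ inside the integrand (since $\mathcal{E}_jf(\cdot,x_j)$ has Fourier transform $f(\xi')e^{2\pi i x_j\varphi_j(\xi')}$), and then use one integration by parts against the phase-gradient $\nabla_{\xi'}e^{2\pi i(x'\cdot\xi'+x_j\varphi_j(\xi'))}=2\pi i\bigl(x'+x_j\nabla\varphi_j(\xi')\bigr)e^{2\pi i(\cdots)}$ to match the two sides, iterating for $N>1$. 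If anything your version is more complete than the paper's, which writes out only the left-hand side as $\int_{U_j}\bigl(x'-x_0'-x_j\nabla\varphi_j(\xi')\bigr)f(\xi')e^{2\pi i(\cdots)}d\xi'$ and leaves the matching of this against $\mathcal{E}_j\mathcal{F}_j\bigl((x'-x_0')\mathcal{F}_j^{-1}f\bigr)$, as well as the integration-by-parts step, implicit; the conjugation reformulation via $\mathcal{E}_j f=e^{2\pi i x_j\varphi_j(D'/i)}\mathcal{F}_j^{-1}f$ is a tidy extra not used by the paper. You are right to flag the sign bookkeeping as the place where care is needed — the paper does not actually pin down the conventions for $\mathcal{F}_j$, $\mathcal{F}_j^{-1}$ and $D'$, and the identity as stated is convention-dependent, so a fully rigorous write-up must fix these consistently before the integration-by-parts step.
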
\begin{proof}
For \(N=1\), 
\begin{align*}
&[x'-x_0'-x_j\nabla\varphi(\frac{D'}{i})]\mathcal{E}_jf&&\\
=&[x'-x_0'-x_j\nabla\varphi(\frac{D'}{i})]\int_{U_j}e^{2\pi i(x'\xi'+x_j\varphi_j(\xi'))}f(\xi')d\xi',&&\\
=&\int_{U_j}(x'-x_0'-x_j\nabla\varphi_j(\xi'))e^{2\pi i(x'\xi'+x_j\varphi_j(\xi'))}f(\xi')d\xi'.
\end{align*}
The case \(N>1\) is follows the same process using integration by parts \(N\) times.
\end{proof}
This implies that if \(\mathcal{F}_j^{-1}f\) is rapidly decaying outside of \(\{x':|x'-x_0'|\lesssim A\}\), then \(\mathcal{E}_jf\) decays rapidly outside of \(\{x':|x'-x_0'-x_j\nabla\varphi_j(\xi')|\lesssim A\}\), where \(\xi'\) covers \(\supp(f)\).

Let \(N_j\), \(j=1,\cdots,n+1\) be transversal unit vectors in \(\mathbb{R}^{n+1}\), and \(H_j\subset\mathbb{R}^{n+1}\) be the hyperplanes passing through the origin, with \(N_j\), \(j=1,\cdots,n+1\) the corresponding normals. By transversality, \(\{N_j,j=1,\cdots,n+1\}\) forms a basis of \(\mathbb{R}^{n+1}\), then let \(x\in\mathbb{R}^{n+1}\) with their coordinates defined with respect to the basis. Let \(\mathcal{L}:=\{z_1N_1+\cdots+z_{n+1}N_{n+1}:(z_1,\cdots,z_{n+1})\in\mathbb{Z}^{n+1}\}\) be the lattice in \(\mathbb{R}^{n+1}\) generated by \(\{N_j,j=1,\cdots,n+1\}\). Note that any lattice in \(\mathbb{R}^{n+1}\) has the same form as \(\mathcal{L}\) with possibly different basis vectors. Additionally, for each \(\mathcal{H}_j\), define the induced lattice \(\mathcal{L}(\mathcal{H}_j):=\pi_{N_j}(\mathcal{L})\). 

For \(r>0\), define \(\mathcal{C}(r)\) to be the set of parallelepipeds of size \(r\) in \(\mathbb{R}^{n+1}\) relative to the lattice \(\mathcal{L}\), which has the form
\[q(k):=[r(k_1-\frac{1}{2}),r(k_1+\frac{1}{2})]\times\cdots\times[r(k_{n+1}-\frac{1}{2}),r(k_{n+1}+\frac{1}{2})],\]
where \(k=(k_1,\cdots,k_{n+1})\in\mathcal{L}\). The "size" here implies the size of side lengths with respect to the unit vector.

Further define its centre \(c(q)=rk:=(rk_1,\cdots,rk_{n+1})\in r\mathcal{L}\). Then for each \(j=1,\cdots,n+1\), let \(\mathcal{CH}_j(r)=\pi_{N_j}\mathcal{C}(r)\) be the set of parallelepipeds of size \(r\) in \(\mathcal{H}_j\). For each \(q,q'\) in either \(\mathcal{CH}_j(r)\) or \(\mathcal{C}(r)\), define \(d(q,q')\) to be the distance between the two parallelepipeds.

Let \(\chi_0^n:\mathbb{R}^n\to[0,+\infty)\) be a Schwartz function such that \(\|\chi_0^n\|_{L^1}=1\) and \(\supp(\chi_0^n)\subset B(0,1)\). For each \(j\in\{1,\cdots,n+1\}\) and \(r>0\), define the linear operator \(\mathcal{T}_j:\mathcal{H}_j\to\mathbb{R}^n\) that maps \(\mathcal{L}(\mathcal{H}_j)\) to the standard lattice \(\mathbb{Z}^n\) in \(\mathbb{R}^n\). Then for each \(q\in\mathcal{CH}_j(r)\), define \(\chi_q:\mathcal{H}_j\to\mathbb{R}\) by 
\[\chi_q(x)=\chi_0^n(\mathcal{T}_j(\frac{x-c(q)}{r})).\]
Since \(\chi_0^n\) is a Schwartz function, which satisfies the criterion of the Poisson summation formula, then 
\[\sum_{q\in\mathcal{CH}_j(r)}\chi_q=\sum_{y\in\mathbb{Z}^n}\widehat{\chi_0^n}(y)e^{2\pi ix\cdot y}.\]
[14]

Since \(\supp(\widehat{\chi_0^n})\subset B(0,1)\), then \(\sum_{y\in\mathbb{Z}^n}\widehat{\chi_0^n}(y)e^{2\pi ix\cdot y}=\widehat{\chi_0^n}(0)\). Note that \(\chi_0^n\) is non-negative, and \(\widehat{\chi_0^n}(0)=\int\chi_0^n(x)e^{-2\pi ix\cdot0}dx=\|\chi_0^n\|_{L^1}=1\). Summing up the equalities:
\begin{lemma}
\[\sum_{q\in\mathcal{CH}_j(r)}\chi_q=1.\]
[14]
\end{lemma}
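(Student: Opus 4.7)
The plan is to combine the three observations already assembled in the paragraph immediately preceding the lemma into a single short chain of equalities; nothing genuinely new is required, only the careful packaging of those ingredients.

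First, I would reduce the lattice sum on $\mathcal{H}_j$ to a standard lattice sum on $\mathbb{R}^n$ by using the linear map $\mathcal{T}_j$. By construction $\mathcal{T}_j$ carries $\mathcal{L}(\mathcal{H}_j)$ onto $\mathbb{Z}^n$, so setting $u:=\mathcal{T}_j(x/r)$ and noting $\mathcal{T}_j(c(q)/r)$ runs over $\mathbb{Z}^n$ as $q$ ranges over $\mathcal{CH}_j(r)$, the sum $\sum_q \chi_q(x)$ becomes $\sum_{k\in\mathbb{Z}^n}\chi_0^n(u-k)$. This is the precise form to which the Poisson summation formula applies, and the Schwartz regularity of $\chi_0^n$ provides the hypotheses needed. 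Applied, it yields
\[
\sum_{q\in\mathcal{CH}_j(r)}\chi_q(x)=\sum_{y\in\mathbb{Z}^n}\widehat{\chi_0^n}(y)\,e^{2\pi i\,\mathcal{T}_j(x/r)\cdot y},
\]
which is exactly the identity recorded in the paragraph before the lemma.

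Second, I would invoke the support condition $\supp(\widehat{\chi_0^n})\subset B(0,1)$. Every nonzero $y\in\mathbb{Z}^n$ satisfies $|y|\geqslant 1$ and therefore lies outside the open unit ball, so $\widehat{\chi_0^n}(y)=0$ for all $y\neq 0$. Hence only the $y=0$ term survives in the Fourier series on the right-hand side.

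Third, I would evaluate the surviving term: by the definition of the Fourier transform and the non-negativity and normalisation of $\chi_0^n$,
\[
\widehat{\chi_0^n}(0)=\int_{\mathbb{R}^n}\chi_0^n(x)\,dx=\|\chi_0^n\|_{L^1}=1.
\]
Combining the three steps gives $\sum_{q\in\mathcal{CH}_j(r)}\chi_q(x)=1$, which is the claim. There is essentially no obstacle; the only place where one must be careful is the first step, namely verifying that $\mathcal{T}_j$ really does convert the translates indexed by $\mathcal{L}(\mathcal{H}_j)$ into translates along the standard integer lattice so that Poisson summation applies in its textbook form. This, however, is built into the defining property of $\mathcal{T}_j$.
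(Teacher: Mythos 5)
Your proposal is correct and follows essentially the same route the paper takes: change variables by $\mathcal{T}_j$ to reduce to a sum over the standard lattice, apply Poisson summation, kill all nonzero frequencies by the band-limiting assumption $\supp(\widehat{\chi_0^n})\subset B(0,1)$, and evaluate $\widehat{\chi_0^n}(0)=\|\chi_0^n\|_{L^1}=1$. Your slightly more careful bookkeeping --- writing the oscillatory factor as $e^{2\pi i\,\mathcal{T}_j(x/r)\cdot y}$ rather than $e^{2\pi i\,x\cdot y}$ --- does tidy up a small imprecision in the paper's displayed Poisson identity, though it makes no difference once only the $y=0$ term survives.
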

Define the operator \(\langle\cdot\rangle:=(1+|\cdot|^2)^\frac{1}{2}\), then using previous results, for each \(N\in\mathbb{N}\),
\begin{lemma}
\[\sum_{q\in\mathcal{CH}_j(r)}\|\langle\frac{x-c(q)}{r}\rangle^N\chi_qg\|_{L^2}^2\leqslant\|g\|_{L^2}^2.\]
[14]
\end{lemma}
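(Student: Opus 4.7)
The plan is to obtain this estimate as a direct pointwise consequence of the preceding identity $\sum_{q\in\mathcal{CH}_j(r)}\chi_q=1$ combined with the fact that $\chi_0^n$ has compact support, which uniformly controls the weight $\langle (x-c(q))/r\rangle^N$ on $\supp(\chi_q)$.

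First I would note that because $\chi_q\geqslant 0$ and $\sum_{q}\chi_q(x)=1$ at every point, each individual bump satisfies $0\leqslant\chi_q(x)\leqslant 1$ pointwise. Consequently $\chi_q^2\leqslant\chi_q$, so summing yields
\[
\sum_{q\in\mathcal{CH}_j(r)}\chi_q(x)^2\;\leqslant\;\sum_{q\in\mathcal{CH}_j(r)}\chi_q(x)\;=\;1\qquad\text{for all }x\in\mathcal{H}_j.
\]

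Next I would localise the weight on the support. Because $\supp(\chi_0^n)\subset B(0,1)$, the definition $\chi_q(x)=\chi_0^n(\mathcal{T}_j((x-c(q))/r))$ forces $\mathcal{T}_j((x-c(q))/r)\in\overline{B(0,1)}$ whenever $\chi_q(x)\neq 0$. Since $\mathcal{T}_j$ is a fixed linear isomorphism whose operator norm is controlled by the transversality of $N_1,\dots,N_{n+1}$, this yields $|(x-c(q))/r|\leqslant C$ on $\supp(\chi_q)$ for a constant $C=C(\nu,n)$ independent of $r$, $j$, and $q$. Hence pointwise
\[
\langle (x-c(q))/r\rangle^{N}\chi_q(x)\;\leqslant\;(1+C^2)^{N/2}\,\chi_q(x).
\]

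Finally I would assemble the two ingredients: squaring the previous display, multiplying by $|g(x)|^2$, integrating and summing gives
\begin{align*}
\sum_{q\in\mathcal{CH}_j(r)}\bigl\|\langle (x-c(q))/r\rangle^{N}\chi_q g\bigr\|_{L^2}^{2}
&=\int|g(x)|^{2}\sum_{q}\langle (x-c(q))/r\rangle^{2N}\chi_q(x)^{2}\,dx\\
&\lesssim_{N,\nu}\int|g(x)|^{2}\sum_{q}\chi_q(x)^{2}\,dx\;\leqslant\;\|g\|_{L^{2}}^{2}.
\end{align*}

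\emph{Main obstacle.} There is no genuine analytic obstacle; the whole argument rests on the compact support of $\chi_0^n$. The only subtlety worth flagging is that the weight contributes a multiplicative constant $(1+C^2)^{N/2}$ depending on $N$ and on the transversality parameter $\nu$; the stated bound $\leqslant\|g\|_{L^2}^{2}$ must therefore be read in the $\lesssim$ convention used throughout the dissertation, with this $N$- and $\nu$-dependence absorbed into the implicit constant.
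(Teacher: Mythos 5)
Your proof is correct and follows essentially the same route as the paper: both rely on the partition-of-unity identity $\sum_q \chi_q = 1$ from the preceding lemma together with the compact support of $\chi_0^n$ to bound the weight $\langle (x-c(q))/r\rangle^N$ uniformly on $\supp(\chi_q)$. You are also right to flag that the stated $\leqslant$ is really $\lesssim_{N}$ (the paper's own proof likewise produces a constant depending on $N$), and your handling of the compact-support step is if anything more explicit than the paper's terse "Schwartz, previous property" phrasing.
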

\begin{proof}
\[\sum_{q\in\mathcal{CH}_j(r)}\|\langle\frac{x-c(q)}{r}\rangle^N\chi_qg\|_{L^2}^2\leqslant\int|\sum_{q\in\mathcal{CH}_j(r)}\langle\frac{x-c(q)}{r}\rangle^N\chi_qg|^2.\]
Since \(\langle\frac{x-c(q)}{r}\rangle^N\) is a polynomial, then \(\langle\frac{x-c(q)}{r}\rangle^N\chi_q\) is also Schwartz. By the previous property, \(\sum_{q\in\mathcal{CH}_j(r)}\langle\frac{x-c(q)}{r}\rangle^N\chi_q\lesssim_N1\), which implies
\[\int|\sum_{q\in\mathcal{CH}_j(r)}\langle\frac{x-c(q)}{r}\rangle^N\chi_qg|^2\lesssim_N\|g\|_{L^2}^2\]
for all \(g\in L^2(\mathcal{H}_j)\).
\end{proof}
Using the previous notations, the discrete Loomis-Whitney inequality states
\begin{theorem}[discrete Loomis-Whitney inequality]
\[\|\prod_{j=1}^{n+1}g_j(\pi_{N_j}(z))\|_{l^\frac{2}{n}(\mathcal{L})}\lesssim\prod_{j=1}^{n+1}\|g\|_{l^2(\mathcal{L}(\mathcal{H}_j))}.\]
[14]
\end{theorem}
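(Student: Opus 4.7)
The plan is to reduce the inequality to the classical discrete Loomis--Whitney estimate on $\mathbb{Z}^{n+1}$ with coordinate hyperplane projections, and then establish that by induction on $n$.

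First, I would exploit the transversality of $\{N_1,\cdots,N_{n+1}\}$ to change coordinates. Since these vectors form a basis of $\mathbb{R}^{n+1}$, every $z\in\mathcal{L}$ is uniquely $z=\sum_i z_iN_i$ with $(z_1,\cdots,z_{n+1})\in\mathbb{Z}^{n+1}$. The identity $\pi_{N_j}(N_j)=0$, which holds because $N_j$ is a unit normal to $\mathcal{H}_j$, shows that $\pi_{N_j}(z)=\sum_{i\neq j}z_i\,\pi_{N_j}(N_i)$ depends only on the $n$ coordinates $(z_i)_{i\neq j}$; transversality further makes $\{\pi_{N_j}(N_i)\}_{i\neq j}$ a basis of $\mathcal{H}_j$, so $(z_i)_{i\neq j}\mapsto\pi_{N_j}(z)$ is a bijection between $\mathbb{Z}^n$ and $\mathcal{L}(\mathcal{H}_j)$. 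Setting $\tilde g_j(z_1,\cdots,\hat z_j,\cdots,z_{n+1}):=g_j(\pi_{N_j}(z))$, with the hat marking the omitted index, counting measures are preserved under these bijections and the stated estimate becomes equivalent to
\[\Bigl(\sum_{z\in\mathbb{Z}^{n+1}}\prod_{j=1}^{n+1}\tilde g_j(z_1,\cdots,\hat z_j,\cdots,z_{n+1})^{2/n}\Bigr)^{n/2}\lesssim_n\prod_{j=1}^{n+1}\|\tilde g_j\|_{l^2(\mathbb{Z}^n)}.\]

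I would prove this reduced form by induction on $n$. The base case $n=1$ (ambient $\mathbb{Z}^2$) is immediate from Fubini: $\sum_{(z_1,z_2)}\tilde g_1(z_2)^2\tilde g_2(z_1)^2=\|\tilde g_1\|_{l^2}^2\|\tilde g_2\|_{l^2}^2$, giving equality. For the inductive step, fix $z_{n+1}$; the factor $\tilde g_{n+1}$ does not involve $z_{n+1}$, while each slice $h_j:=\tilde g_j(\,\cdot\,,z_{n+1})$ with $j\leq n$ is a function on $\mathbb{Z}^{n-1}$. H\"older in $(z_1,\cdots,z_n)$ with exponents $n$ and $n/(n-1)$ isolates $\tilde g_{n+1}^{2/n}$, and the other factor becomes $\bigl(\sum_{z'}\prod_{j=1}^n h_j^{2/(n-1)}\bigr)^{(n-1)/n}$, to which the inductive hypothesis (ambient $\mathbb{Z}^n$ with $n$ functions on $\mathbb{Z}^{n-1}$, exponent $2/(n-1)$) applies. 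This yields, for each $z_{n+1}$,
\[\sum_{z'\in\mathbb{Z}^n}\tilde g_{n+1}(z')^{2/n}\prod_{j=1}^n h_j^{2/n}\leq \|\tilde g_{n+1}\|_{l^2(\mathbb{Z}^n)}^{2/n}\prod_{j=1}^n\|h_j\|_{l^2(\mathbb{Z}^{n-1})}^{2/n}.\]

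Finally, summing over $z_{n+1}$ and applying generalised H\"older with $n$ equal exponents to the remaining product gives $\sum_{z_{n+1}}\prod_j\|h_j\|_{l^2}^{2/n}\leq\prod_j\bigl(\sum_{z_{n+1}}\|h_j\|_{l^2}^2\bigr)^{1/n}=\prod_{j=1}^n\|\tilde g_j\|_{l^2(\mathbb{Z}^n)}^{2/n}$ by Fubini. Combining this with the $\|\tilde g_{n+1}\|_{l^2}^{2/n}$ factor and raising to the $n/2$-th power closes the induction. The main obstacle is the bookkeeping of H\"older exponents: the pair $n$ and $n/(n-1)$ inside the slice, together with $n$ equal exponents across slices, is the unique calibration that absorbs the exponent $2/n$ without producing mixed norms and simultaneously matches the inductive exponent $2/(n-1)$, so any other choice either breaks the outer sum over $z_{n+1}$ or fails to feed into the recursion in the right form.
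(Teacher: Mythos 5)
The paper itself does not prove this theorem; it only states it and cites Bejenaru's paper [14], so there is no in-paper proof to compare against. Your proof is correct and self-contained. The reduction is clean: the observation that the orthogonal projection $\pi_{N_j}$ annihilates $N_j$ (as $N_j$ is the unit normal of $\mathcal{H}_j$) and that transversality forces $\{\pi_{N_j}(N_i)\}_{i\neq j}$ to be a basis of $\mathcal{H}_j$ correctly establishes the lattice isomorphism $\mathcal{L}\cong\mathbb{Z}^{n+1}$ under which the projections become coordinate deletions and counting measures match up. The induction is the standard slicing argument for discrete Loomis--Whitney: fixing $z_{n+1}$, the H\"older split with exponents $n$ and $n/(n-1)$ isolates $\tilde g_{n+1}^{2/n}$ and rescales the remaining product to exponent $2/(n-1)$, exactly the form needed for the inductive hypothesis on $\mathbb{Z}^n$; summing over $z_{n+1}$ with $n$-fold H\"older and Fubini then recovers the full $l^2(\mathbb{Z}^n)$ norms of $\tilde g_1,\ldots,\tilde g_n$. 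One minor point worth noting: the argument actually yields the inequality with constant $1$ (the base case is an equality, and each inductive step uses only H\"older), so the $\lesssim_n$ can be replaced by $\leq$; this is strictly stronger than what the theorem asserts. Since the paper omits the proof entirely, supplying this argument is a genuine addition rather than a restatement.
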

For some \(0<\delta\ll1\), split each \(U_j\), \(j=1,\cdots,n+1\) into smaller pieces of diameter at most \(\delta\), where diameter of \(U_j\) is the longest distance between two points in \(U_j\). The surfaces \(\Sigma_j(U_j)\) are also split into corresponding pieces. 

With some abuse of notation, let \(U_j\) be a piece of diameter as above. For each \(\Sigma_j(U_j)\), let \(\zeta_j^0\in\Sigma_j(U_j)\) and \(N_j=N_j(\zeta_j^0)\) be the normal to \(\Sigma_j\). Let \(\mathcal{H}_j\) be the transversal hyperplane passing through the origin with normal \(N_j(\zeta_j^0)\). By a change of coordinates, without loss of generality assume \(U_j\subset B_j(0,\delta)\subset\mathcal{H}_j\), where \(B_j(0,\delta)\) is the ball in the hyperplane \(\mathcal{H}_j\). Then the corresponding extension operator is
\[\mathcal{E}_jf_j(x):=\int_{U_j}f(\xi')e^{2\pi i(x'\cdot\xi'+x_j\varphi_j(\xi'))}d\xi',\]
where \(x=(x_j,x')\), \(x_j\) is the coordinate in the direction of \(N_j\) and  \(x'\) is the coordinate in the direction of \(\mathcal{H}_j\).

Since \(\Sigma_j\) is smooth and bounded and hence \(\varphi_j\) is Lipschitz continuous, then \(|\nabla\varphi_j(x)-\nabla\varphi_j(y)|\lesssim\delta\) for any \(x,y\in U_j\).

Similar to other methods that use Kakeya type estimates, the method to show Restriction Theorem 5 requires a continuous inductive argument, in the sense of estimating \(\prod_{j=1}^{n+1}\mathcal{E}_Jf_j\) on parallelepipeds that increase in size over time.

\begin{definition}[Margin]
For \(f:\mathcal{H}_j\to\mathbb{C}\), define the margin 
\[\margin^j(f):=dist(\supp(f),B_j(0,\delta)^c),\]
\(j=1,\cdots,n+1\).[14]
\end{definition}
As in the previous section, define the multilinear restriction constant:
\begin{definition}[Multilinear Restriction Constants]
For \(R\geq\delta^{-2}\), define \(\MR(R)\) be the smallest constant such that 
\[\|\prod_{j=1}^{n+1}\mathcal{E}_jf_j\|_{L^\frac{2}{n}(Q)}\leqslant \MR(R)\prod_{j=1}^{n+1}\|f_j\|_{L^2}\]
 holds for all parallelepipeds \(Q\in\mathcal{C}(R)\), with\(f_j\) satisfies \(\margin^j(f_j)\geqslant\delta-R^{-\frac{1}{2}}\).[14]
\end{definition}
Since  \(R\geq\delta^{-2}\), then \(R^{-\frac{1}{2}}\leqslant\delta\ll1\) and \(\delta-R^{-\frac{1}{2}}\geq0\).

The estimate inside any parallelepiped \(Q\in \mathcal{C}(\delta^{-1}R)\) can be evaluated by prior information on the estimates inside parallelepipeds \(q \in \mathcal C ( R ), q \cap Q \neq \emptyset\) with centres in \(R\mathcal{L}\). Without loss of generality, assume \(Q\) is centred at the origin. Using the projection \(\pi_{N_j}\) onto \(\mathcal{H}_j\), then \(\pi_{N_j}q\in\mathcal{CH}_j(R)\).

Each \(q \in \mathcal C ( R ), q \cap Q \neq \emptyset\) has size \(R\). The induction hypothesis is that if \(\margin^j(f_j) \geq \delta-R^{-1/2}\) the estimate
\begin{align*}
&\|\prod_{j=1}^{n+1}\mathcal{E}_jf_j\|_{L^\frac{2}{n}(q)}&&\\
\lesssim& \MR(R)\prod_{j=1}^{n+1}(\sum_{q'\in\mathcal{CH}_j(R)}\langle\frac{d(\pi_{N_j},q')}{R}\rangle^{-(2N-n^2)}\|\langle\frac{x-c(q')}{R}\rangle^N\chi_{q'}\mathcal{F}_j^{-1}f_j\|_{L^2}^2)^\frac{1}{2}
\end{align*}
holds.

Fix \(j=1\) and \(q'\in\mathcal{CH}_1(R)\), for \(x=(x_1,x')\), then we have the inequalities
\begin{align*}
&\|(x'-c(q')-x_1\nabla\varphi_1(\xi_0'))\mathcal{E}_1\mathcal{F}_1(\chi_{q'}\mathcal{F}_1^{-1}f_1)\cdot\prod_{j=2}^{n+1}\mathcal{E}_jf_j\|_{L^\frac{2}{n}(q)}&&\\
&\leqslant\|(x'-c(q')-x_1\nabla\varphi_1(\xi'))\mathcal{E}_1\mathcal{F}_1(\chi_{q'}\mathcal{F}_1^{-1}f_1)\cdot\prod_{j=2}^{n+1}\mathcal{E}_jf_j\|_{L^\frac{2}{n}(q)}&\\
&\,\,\,\,\,\,\,\,+\|x_1(\nabla\varphi_1(\xi_0')-\nabla\varphi_1(\xi'))\mathcal{E}_1\mathcal{F}_1(\chi_{q'}\mathcal{F}_1^{-1}f_1)\cdot\prod_{j=2}^{n+1}\mathcal{E}_jf_j\|_{L^\frac{2}{n}(q)},&&\\
&=\|\mathcal{E}_1\mathcal{F}_1((x'-c(q'))\chi_{q'}\mathcal{F}_1^{-1}f_1)\cdot\prod_{j=2}^{n+1}\mathcal{E}_jf_j\|_{L^\frac{2}{n}(q)}&&\\
&\,\,\,\,\,\,\,\,+\|x_1\mathcal{E}_1\mathcal{F}_1((\nabla\varphi_1(\xi_0')-\nabla\varphi_1(\xi'))\chi_{q'}\mathcal{F}_1^{-1}f_1)\cdot\prod_{j=2}^{n+1}\mathcal{E}_jf_j\|_{L^\frac{2}{n}(q)},&&\\
&\leqslant \MR(R)\|(x'-c(q'))\chi_{q'}\mathcal{F}_1^{-1}f_1\|_{L^2}\prod_{j=2}^{n+1}\|f_j\|_{L^2}&&\\
&\,\,\,\,\,\,\,\,+\MR(R)\delta^{-1}R\|(\nabla\varphi_1(\xi_0')-\nabla\varphi_1(\xi'))\chi_{q'}\mathcal{F}_1^{-1}f_1\|_{L^2}\prod_{j=2}^{n+1}\|f_j\|_{L^2},
\end{align*}\begin{align*}
&\leqslant \MR(R)(\|(x'-c(q'))\chi_{q'}\mathcal{F}_1^{-1}f_1\|_{L^2}+R\|\chi_{q'}\mathcal{F}_1^{-1}f_1\|_{L^2})\prod_{j=2}^{n+1}\|f_j\|_{L^2},&&\\
&\lesssim R\cdot \MR(R)\|\langle\frac{x'-c(q')}{R}\rangle\chi_{q'}\mathcal{F}_1^{-1}f_1\|_{L^2}\prod_{j=2}^{n+1}\|f_j\|_{L^2}.
\end{align*}
Here the first step is a triangle inequality that separates \(x_1\nabla\varphi_1(\xi_0')=x_1(\nabla\varphi_1(\xi_0')+\nabla\varphi_1(\xi')-\nabla\varphi_1(\xi'))\), where \(\xi_0'\) is fixed. The second step is a consequence of lemma 4.2 and \(\nabla\varphi_1(\xi_0')-\nabla\varphi_1(\xi')\) can be seen as a constant with respect of \(\mathcal{E}_1\mathcal{F}_1\). The third step uses the induction hypothesis and the \(|x_1|\lesssim\delta^{-1}R\) in \(Q\). The final step follows by the definition of \(\langle\cdot\rangle\).

The support of \(f_j\) is required to be slightly altered to ensure the validity of the previous induction estimate. According to Bejenaru's work, by assuming \(\margin^j (f_j) \geq \delta - (\delta^{-1} R)^{-1/2}\), the induction hypothesis holds for \(R\) being updated to \(\delta^{-1} R\). Applying this modification while inducting does not fail the argument, as the accumulation is convergent.

For any \(q\in\mathcal{C}(R)\cap Q\) and \(x'\in\pi_{N_1}(q)\), since \(|x_1|\lesssim\delta^{-1}R\) and \(|\nabla\varphi_1(\xi_0')|\leqslant\delta\), then \(|x_1\nabla\varphi_1(\xi_0')|\leqslant R\) and
\[\langle\frac{x'-c(q')-x_1\nabla\varphi_1(\xi_0')}{R}\rangle\sim\langle\frac{d(\pi_{N_1}q,q')}{R}\rangle.\]
Hence by the previous two estimates,
\begin{align*}
&\|\mathcal{E}_1\mathcal{F}_1(\chi_{q'}\mathcal{F}_1^{-1}f_1)\cdot\prod_{j=2}^{n+1}\mathcal{E}_jf_j\|_{L^\frac{2}{n}(q)}&&\\
\lesssim& \MR(R)\langle\frac{d(\pi_{N_1}q,q')}{R}\rangle^{-1}\|\langle\frac{x'-c(q')}{R}\rangle\chi_{q'}\mathcal{F}_1^{-1}f_1\|_{L^2}\prod_{j=2}^{n+1}\|f_j\|_{L^2}.
\end{align*}
Repeating the argument for \(N\) times as in Lemma 4.2,
\begin{align*}
&\|\mathcal{E}_1\mathcal{F}_1(\chi_{q'}\mathcal{F}_1^{-1}f_1)\cdot\prod_{j=2}^{n+1}\mathcal{E}_jf_j\|_{L^\frac{2}{n}(q)}&&\\
\lesssim_N& \MR(R)\langle\frac{d(\pi_{N_1}q,q')}{R}\rangle^{-N}\|\langle\frac{x'-c(q')}{R}\rangle^N\chi_{q'}\mathcal{F}_1^{-1}f_1\|_{L^2}\prod_{j=2}^{n+1}\|f_j\|_{L^2}.
\end{align*}
Hence by the previous inequality and Lemma 4.3 and Lemma 4.1,
\begin{align*}
&\|\mathcal{E}_1f_1\prod_{j=2}^{n+1}\mathcal{E}_jf_j\|_{L^\frac{2}{n}(q)}^\frac{2}{n}&&\\
&\leqslant \MR(R)^\frac{2}{n}\sum_{q'\in\mathcal{CH}_1(R)}\|\mathcal{E}_1\mathcal{F}_1(\chi_{q'}\mathcal{F}_1^{-1}f_1)\cdot\prod_{j=2}^{n+1}\mathcal{E}_jf_j\|_{L^\frac{2}{n}(q)}^\frac{2}{n},&&\\
&\lesssim_N\MR(R)^\frac{2}{n}(\sum_{q'\in\mathcal{CH}_1(R)}\langle\frac{d(\pi_{N_1}q,q')}{R}\rangle^{-\frac{2N}{n}}\|\langle\frac{x'-c(q')}{R}\rangle^N\chi_{q'}\mathcal{F}_1^{-1}f_1\|_{L^2}^{\frac{2}{n}})\prod_{j=2}^{n+1}\|f_j\|_{L^2}^\frac{2}{n},&&\\
&\lesssim_N\MR(R)^\frac{2}{n}\prod_{j=2}^{n+1}\|f_j\|_{L^2}^\frac{2}{n}(\sum_{q'\in\mathcal{CH}_1(R)}\langle\frac{d(\pi_{N_1}q,q')}{R}\rangle^{-(2N-n^2)}\|\langle\frac{x'-c(q')}{R}\rangle^N\chi_{q'}\mathcal{F}_1^{-1}f_1\|_{L^2}^2)^\frac{1}{n},
\end{align*}
where the last step follows by \(\|a_j\cdot b_j\|_{l_j^\frac{2}{n}}\lesssim\|a_j\|_{l_j^2}\|b_j\|_{l_j^\frac{2}{n-1}}\) and 
\[\|\langle\frac{d(\pi_{N_1}q,q')}{R}\rangle^{-\frac{n^2}{2}}\|_{l_{q'}^\frac{2}{n-1}}\lesssim1.\]
The previous series of estimates are the argument for \(f_1\), repeating the same argument for \(f_2\cdots f_{n+1}\) and applying Lemma 4.4, then
\begin{align*}
&\|\prod_{j=1}^{n+1}\mathcal{E}_jf_j\|_{L^\frac{2}{n}(q)}&&\\
\lesssim& \MR(R)\prod_{j=1}^{n+1}(\sum_{q'\in\mathcal{CH}_j(R)}\langle\frac{d(\pi_{N_j},q')}{R}\rangle^{-(2N-n^2)}\|\langle\frac{x-c(q')}{R}\rangle^N\chi_{q'}\mathcal{F}_j^{-1}f_j\|_{L^2}^2)^\frac{1}{2}.
\end{align*}
Define the functions \(g_j:\mathcal{L}(\mathcal{H}_j)\to\mathbb{R}\) by
\[g_j(k)=(\sum_{q'\in\mathcal{CH}_j(R)}\langle\frac{d(q(k),q')}{R}\rangle^{-(N-2n^2)}\|\langle\frac{x'-c(q')}{R}\rangle^N\chi_{q'}\mathcal{F}_n^{-1}f_j\|_{L^2}^2)^\frac{1}{2},\,\,\,\,k\in\mathcal{L}(\mathcal{H}_j).\]
By Lemma 4.4, for \(N=N(n)\) large enough, \(g_j\in l^2(\mathbb{Z}^n)\) and 
\[\|g_j\|_{l^2(\mathcal{L}(\mathcal{H}_j))}\lesssim\|f_j\|_L^2.\]
By the discrete Loomis-Whitney inequality,
\[\|\prod_{j=1}^{n+1}\mathcal{E}_jf_j\|_{L^\frac{2}{n}(Q)}\lesssim \MR(R)\prod_{j=1}^{n+1}\|f_j\|_{L^2},\]
and hence \(\MR(\delta^{-1}R)\leqslant C\MR(R)\), where \(C\) is independent of \(\delta\) and \(R\). Iterating \(N\) times, \(\MR(\delta^{-N}r)\leqslant C^N\MR(R)\).

By the pointwise uniform bound
\[\|\prod_{j=1}^{n+1}\mathcal{E}_jf_j\|_{L^\infty}\lesssim\prod_{j=1}^{n+1}\|\mathcal{E}_jf_j\|_{L^\infty}\lesssim\prod_{j=1}^{n+1}\|f_j\|_{L^2}.\]
Integrating over arbitrary cubes of size \(\leqslant\delta^{-2}\), 
\[\max_{r\in[0,\delta^{-2}]}\MR(\delta^{-N}r)\leqslant C^N\max_{r\in[0,\delta^{-2}]}\MR(r)=C^NC(\delta),\]
and for \(R\in[\delta^{-N},\delta^{-N-1}]\),
\[A(R)\leqslant C^NC(\delta)\leqslant R^\varepsilon C(\delta)\]
as long as \(C^N\leqslant\delta^{-N\varepsilon}\). Hence by letting \(\delta=C^{-\frac{1}{\varepsilon}}\),
\[\|\prod_{j=1}^n\mathcal{E}_jf_j\|_{L^\frac{2}{n}(B(0,R))}\lesssim C(\varepsilon)R^\varepsilon\prod_{j=1}^n\|f_j\|_{L^2(U_j)}\]
for all \(f_j\in L^2(U_j),j=1,\cdots n+1\).[14]

By additional assumptions on the \(k\)-dimensional parallelpiped spanned by the normals \(\{N_j(x_j)\}\), and the support of \(\psi\), further refinements of the multilinear restriction theorem has been done by Bejenaru.[14] However, the epsilon loss \(C(\varepsilon)R^\varepsilon\) on the right hand side remains to be the greatest barrier in this subfield of harmonic analysis, before proving the whole restriction conjecture.

\pagebreak

{\Huge \bf Bibliography}

\bigskip


[1] G.B. Folland, Real Analysis: Modern Techniques and Their Applications. 2nd ed., Wiley, 1999, 370-374.
 
[2] M. Ziemke, Interpolation of Operators: Riesz-Thorin, Marcinkiewicz. \url{https://www.math.drexel.edu/faculty/mjz55/wp-content/uploads/sites/8/2017/01/at-interpol.pdf}, 2011.

[3] C. Demeter, Fourier Restriction, Decoupling, and Applications. Cambridge University Press, 2020.

[4] P. Mattila, Fourier Analysis and Hausdorff Dimension. Cambridge University Press, 2015.
 
[5] T. Tao, 247B, Notes 1: Restriction Theory. What’s New. \url{https://terrytao.wordpress.com/2020/03/29/247b-notes-1-restriction-theory/}, 2020.

[6] C. Muscalu, and W. Schlag, Classical and Multilinear Harmonic Analysis. Cambridge University Press, 2013, 295-300.

 [7] E.M. Stein, Harmonic Analysis (PMS-43), Volume 43: Real-Variable Methods, Orthogonality, and Oscillatory Integrals. (PMS-43). Princeton University Press, 2016.

[8] L. Guth, The Polynomial Method notes. \url{https://math.mit.edu/\~lguth/PolyMethod/lect30.pdf}, 2012.

[9] V.I. Bogachev, Measure Theory. Springer, 2007.
 
[10] D. Kriventsov, The Restriction Problem and the Tomas-Stein Theorem. 2009. \url{https://math.uchicago.edu/\~may/VIGRE/VIGRE2009/REUPapers/Kriventsov.pdf}, 2009

[11] R. Strichartz, Restrictions of Fourier Transforms to Quadratic Surfaces and Decay of Solutions of Wave Equations. Duke Mathematical Journal 44.3 (1977). \url{https://doi.org/10.1215/s0012-7094-77-04430-1}, 1977.

[12] J. Bennett, A. Carbery, T. Tao, On the Multilinear Restriction and Kakeya Conjectures. \textit{Acta Mathematica} 196.2 (2006): 261–302. \url{https://doi.org/10.1007/s11511-006-0006-4},  2006.

[13] T. Hytönen, et al, Analysis in Banach Spaces: Martingales and Littlewood-Paley Theory. Springer, 2016.
 
[14] I. Bejenaru, The Multilinear Restriction Estimate: A Short Proof and a Refinement. Math. Res. Lett. (2017): 1585–1603. \url{https://intlpress.com/site/pub/files/\_fulltext/journals/mrl/2017/0024/0006/MRL-2017-0024-0006-a001.pdf}, 2017.
\end{document}